\newcommand{\eps}{\varepsilon}
\newcommand{\s}{\qquad}
\newcommand{\be}{\begin{equation}}
\newcommand{\ba}{\begin{array}}
\newcommand{\ee}{\end{equation}}
\newcommand{\ea}{\end{array}}
\newcommand{\oln}{\overline}
\newcommand{\tx}{\text}
\newcommand{\ds}{\displaystyle}
\def\eop{{\ \vrule height 3pt width 3pt depth 0pt}}
\begin{document}\author{
{\sc V.~Franklin}$^1$\;\;{\sc M.~Paramasivam}$^1$\;\;\sc
{S.~Valarmathi}$^1$\;\;{and}\;\;{\sc J.J.H.~Miller}$^2$}
\title*{{\bf Second order parameter-uniform convergence for a finite difference method for a singularly
perturbed linear parabolic system}} \institute{$^1${Department of
Mathematics, Bishop Heber College, Tiruchirappalli-620017, Tamil
Nadu, India.\\
franklinvicto@gmail.com, sivambhcedu@gmail.com, valarmathi07@gmail.com.}\\
%Tiruchirappalli-620023, Tamil Nadu, India. maragathameenakship@gmail.com.}\\
$^2${Institute for Numerical Computation and Analysis, Dublin 2,
Ireland. jm@incaireland.org.}}
\titlerunning{Numerical solution of a parabolic reaction-diffusion system}
%\authorrunning{}
\maketitle
%\begin{center}
%\it{Dedicated to G. I. Shishkin on his 70th birthday}
%\end{center}
\begin{abstract}
A singularly perturbed linear system of second order partial differential equations of parabolic reaction-diffusion type with given initial and boundary conditions
is considered. The leading term of each equation is multiplied by a small positive parameter. These singular perturbation parameters are assumed to be distinct. The
components of the solution exhibit overlapping layers. Shishkin piecewise-uniform meshes are introduced, which are used in conjunction with a classical finite
difference discretisation, to construct a numerical method for solving this problem. It is proved that the numerical approximations obtained with this method are
first order convergent in time and essentially second order convergent in the space variable uniformly with respect to all of the parameters.
\end{abstract}
\section{Introduction}

The following parabolic initial-boundary value problem is considered
for a singularly perturbed linear system of second order
differential equations
\begin{equation}\label{BVP}
\frac{\partial\vec{u}}{\partial t}
-E\frac{\partial^2\vec{u}}{\partial
x^2}+A\vec{u}=\vec{f},\;\;\mathrm{on}\;\;
\Omega,\;\;\vec{u}\;\;\mathrm{given\;\; on}\;\;\Gamma,
\end{equation}
where $\Omega=\{(x,t): 0 < x < 1, 0 < t \leq T\}$,\;\;
$\overline{\Omega}=\Omega \cup \Gamma,\;\; \Gamma = \Gamma_L \cup
\Gamma_B \cup \Gamma_R$ with $ \vec u(0,t)=\vec \phi_L(t)\;
\mathrm{on}\;\Gamma_L=\{(0,t): 0 \leq t \leq T\},\;\; \vec
u(x,0)=\vec \phi_B(x)\;\mathrm{on}\;\Gamma_B=\{(x,0): 0 \leq x \leq
1\},\;\; \vec u(1,t)=\vec \phi_R(t)\; \mathrm{on}\;\Gamma_R=\{(1,t):
0 \leq t \leq T\}.$ Here, for all $(x,t) \in \overline{\Omega},$
$\;\vec{u}(x,t)$ and $\vec{f}(x,t)$ are column
$n-\mathrm{vectors},\;E\;$ and $\;A(x,t)\;$ are $\;n\times n\;$
matrices, $\;E = \mathrm{diag}(\vec{\eps}),\;\vec\eps =
(\eps_1,\;\cdots,\;\eps_n)\;$ with $\;0\;<\;\eps_i\;\le\;1\;$ for
all $\;i=1,\ldots,n$. The $\eps_i$ are assumed to be distinct and,
for convenience, to have the ordering\[\eps_1\;<\;\cdots\;<\;\eps_n.\] Cases with some of the parameters coincident are not considered here.\\
\noindent The problem can also be written in the operator form
\[\vec L\vec u\; = \;\vec{f}\;\;\mathrm{on}\;\;
\Omega,\;\;\vec{u}\;\;\mathrm{given\;\; on}\;\;\Gamma,\] where the
operator $\;\vec{L}\;$ is defined by
\[\vec{L}\;=\;\frac{\partial}{\partial t}-E\frac{\partial^2}{\partial x^2} + A.\]
For all $(x,t) \in \overline{\Omega}$ it is assumed
that the components $a_{ij}(x,t)$ of $A(x,t)$
satisfy the inequalities \\
\begin{eqnarray}\label{a1} a_{ii}(x,t) > \displaystyle{\sum_{^{j\neq
i}_{j=1}}^{n}}|a_{ij}(x,t)| \; \; \rm{for}\;\; 1 \le i \le n, \;\; \rm{and} \;\;a_{ij}(x,t) \le 0 \;\; \rm{for} \; \; i \neq j\end{eqnarray} and, for some $\alpha$,
\begin{eqnarray}\label{a2} 0 <\alpha <
\displaystyle{\min_{^{(x,t)\in \overline\Omega}_{1 \leq i \leq
n}}}(\sum_{j=1}^n a_{ij}(x,t)).
\end{eqnarray}
It is also assumed, without loss of generality, that
\begin{eqnarray}\label{a3}
\max_{1 \leq i \leq n} \sqrt{\eps_i} \leq \frac{\sqrt{\alpha}}{6}.
\end{eqnarray}
The reduced problem corresponding to \eqref{BVP} is defined by
\begin{equation}
\frac{\partial\vec{u}_0}{\partial t}
+A\vec{u}_0=\vec{f},\;\;\mathrm{on}\;\;
\Omega,\;\;\vec{u}_0=\vec{u}\;\;\mathrm{on}\;\;\Gamma_B.
\end{equation}
The norms $\parallel \vec{V} \parallel =\max_{1 \leq k \leq n}|V_k|$
for any n-vector $\vec{V}$, $\parallel y
\parallel_D =\sup\{|y(x,t)|: (x,t)\in D\}$ for any
scalar-valued function $y$ and domain $D$, and $\parallel \vec{y}
\parallel=\max_{1 \leq k \leq n}\parallel y_{k}
\parallel$ for any vector-valued function $\vec{y}$ are introduced.
When $D=\overline{\Omega}$ or $\Omega$ the subscript $D$ is usually
dropped. Throughout the paper $C$ denotes a generic positive
constant, which is independent of $x, t$ and of all singular
perturbation and discretization parameters. Furthermore,
inequalities between vectors
are understood in the componentwise sense. Whenever necessary the required smoothness of the problem data is assumed.\\

\noindent For a general introduction to parameter-uniform numerical methods for singular perturbation problems, see \cite{MORS}, \cite{RST} and  \cite{FHMORS}. The
piecewise-uniform Shishkin meshes $\Omega^{M,N}$ in the present paper have the
 elegant property that they reduce to uniform meshes when
the parameters are not small. The problem posed in the present paper
is also considered in \cite{GLOR}, where parameter uniform
convergence is proved, which is first order in time and essentially
first order in space. The meshes used there do not have the above
typical property of Shishkin meshes. The main result of the present
paper is well known in the scalar case, when $n=1$. It is
established in \cite{L} for the case $n=2$. The proof below of first
order convergence in the time variable and essentially second order
convergence in the space variable, for general $n$,  draws heavily
on the analogous result in \cite{PVM} for a reaction-diffusion
system.

The plan of the paper is as follows. In the next two sections both standard and novel bounds on the smooth and singular components of the exact solution are
obtained. The sharp estimates for the singular component in Lemma \ref{lsingular}
 are proved by mathematical induction, while interesting orderings of the points $x_{i,j}$ are
established in Lemma \ref{layers}. In Section 4
 piecewise-uniform Shishkin meshes are introduced, in Section 5 the
discrete problem is defined and the discrete maximum principle, the discrete stability properties and a comparison result are established. In Section 6 an
expression for the local truncation error is derived and standard estimates are stated. In Section 7 parameter-uniform estimates for the local truncation error of
the smooth and singular components are obtained in a sequence of theorems. The section culminates with the statement and proof of the essentially second order
parameter-uniform error
estimate.\\

\section{Standard analytical results}
The operator $\vec L$ satisfies the following maximum principle
\begin{lemma}\label{max} Let $A(x,t)$ satisfy (\ref{a1}) and (\ref{a2}). Let
$\;\vec{\psi}\;$ be any function in the domain of $\;\vec L\;$ such
that $\vec{\psi}\ge \vec{0}$ on $\Gamma$. Then $\;\vec
L\vec{\psi}(x)\ge\vec{0}\;$ on $\Omega$ implies that
$\;\vec{\psi}(x)\ge\vec{0}\;$ on $\overline{\Omega}$.
\end{lemma}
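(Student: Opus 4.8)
The standard approach for a system maximum principle is proof by contradiction, using the positivity of a test function to localize where the minimum is attained. Let me think about the structure.

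We have $\vec{L} = \partial_t - E\partial_{xx} + A$, with $E = \text{diag}(\eps_i)$ and $A$ satisfying (a1) (diagonal dominance, off-diagonals $\le 0$) and (a2) ($\alpha < \min_i \sum_j a_{ij}$). We want: $\vec\psi \ge 0$ on $\Gamma$ and $\vec L \vec\psi \ge 0$ on $\Omega$ implies $\vec\psi \ge 0$ on $\overline\Omega$.

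The idea: Suppose not. Then $\min_{i, (x,t)} \psi_i(x,t) < 0$. Pick the index $k$ and point $(x^*, t^*) \in \overline\Omega$ where this minimum is attained, say $\psi_k(x^*,t^*) = \min_i \min_{\overline\Omega} \psi_i < 0$. Since $\vec\psi \ge 0$ on $\Gamma$, the point $(x^*,t^*)$ must be in $\Omega \setminus \Gamma$... actually it could be on $\Gamma_R$ at $t=T$? No — $\Gamma$ includes $\Gamma_R = \{(1,t): 0 \le t \le T\}$ and $\Gamma_B$ and $\Gamma_L$. So $(x^*,t^*)$ is in the interior in $x$ (i.e., $0 < x^* < 1$) and $0 < t^* \le T$. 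At such a point, since $\psi_k$ has an interior minimum in $x$, we get $\partial_{xx}\psi_k(x^*,t^*) \ge 0$; since it's a minimum in $t$ over $[0,t^*]$ (with $t^* > 0$ possibly equal to $T$), we get $\partial_t \psi_k(x^*,t^*) \le 0$. Then
$$(\vec L\vec\psi)_k(x^*,t^*) = \partial_t\psi_k - \eps_k \partial_{xx}\psi_k + \sum_j a_{kj}\psi_j \le 0 + 0 + \sum_j a_{kj}(x^*,t^*)\psi_j(x^*,t^*).$$

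Now I need to bound the reaction term. Split off the diagonal: $\sum_j a_{kj}\psi_j = a_{kk}\psi_k + \sum_{j\ne k} a_{kj}\psi_j$. Since $\psi_j(x^*,t^*) \ge \psi_k(x^*,t^*)$ for all $j$ (as $\psi_k$ attains the global min), and $a_{kj} \le 0$ for $j \ne k$, we have $a_{kj}\psi_j \le a_{kj}\psi_k$. Hence
$$\sum_j a_{kj}\psi_j \le a_{kk}\psi_k + \sum_{j\ne k} a_{kj}\psi_k = \Big(\sum_j a_{kj}\Big)\psi_k < 0,$$
using $\sum_j a_{kj}(x^*,t^*) \ge \alpha > 0$ (a2) and $\psi_k(x^*,t^*) < 0$. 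So $(\vec L\vec\psi)_k(x^*,t^*) < 0$, contradicting the hypothesis $\vec L\vec\psi \ge 0$.

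**Key steps in order and the main obstacle.** First, set up the contradiction hypothesis and extract the minimizing index $k$ and point $(x^*,t^*)$. Second, argue $(x^*,t^*) \notin \Gamma$ using the boundary hypothesis. Third, use the calculus facts at an interior-in-$x$, minimal-in-$t$ point: $\partial_{xx}\psi_k \ge 0$ and $\partial_t \psi_k \le 0$. Fourth, estimate the zeroth-order term using $a_{kj}\le 0$ ($j\ne k$), $\psi_j \ge \psi_k$ everywhere, and (a2). Conclude $(\vec L\vec\psi)_k < 0$ at $(x^*,t^*)$, a contradiction. The one subtlety to be careful about — the "main obstacle" — is handling the temporal derivative sign at $t^* = T$: one must note that the minimum over the closed parabolic cylinder forces $\psi_k(x^*, t) \ge \psi_k(x^*, t^*)$ for all $t \le t^*$, giving the one-sided derivative inequality $\partial_t\psi_k(x^*,t^*)\le 0$, which is all that is needed; no condition at $t = T$ is required. (If one prefers, one can first replace $\vec\psi$ by $e^{-\lambda t}\vec\psi$ or otherwise perturb to make the extremum strict, but it is not necessary here since (a2) already gives the strict inequality $(\sum_j a_{kj})\psi_k < 0$.)
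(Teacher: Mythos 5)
Your proof is correct and follows essentially the same argument as the paper: contradiction via the minimizing index and point, $\partial_{xx}\psi_k \geq 0$ and $\partial_t\psi_k \leq 0$ at the minimum, and the sign estimate on the reaction term using (a1) and (a2). You have simply filled in details (the time-derivative sign at $t^*=T$, the explicit splitting of the reaction sum) that the paper leaves implicit.
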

\begin{proof}Let $i^*, x^*, t^*$ be such that $\psi_{i^*}(x^{*}, t^*)=\min_{i}\min_{\overline{\Omega}}\psi_i(x,t)$
and assume that the lemma is false. Then $\psi_{i^*}(x^{*},t^*)<0$ .
From the hypotheses we have $(x^*,t^*) \not\in\;\Gamma$ and
$\frac{\partial^2 \psi_{i^*}}{\partial x^2}(x^*,t^*)\geq 0$. Thus
\begin{equation*}(\vec{L}\vec \psi(x^*,t^*))_{i^*}= \frac{\partial \psi_{i^*}}{\partial t}(x^*,t^*)
-\eps_{i^*}\frac{\partial^2\psi_{i^*}}{\partial
x^2}(x^*,t^*)+\sum_{j=1}^n a_{i^*,j}(x^{*},t^*)\psi_j(x^*,t^*)<0,
\end{equation*} which contradicts the assumption and proves the
result for $\vec{L}$.\eop \end{proof}

Let $\tilde{A}(x,t)$ be any principal sub-matrix of $A(x,t)$ and $\vec{\tilde{L}}$ the corresponding operator. To see that any $\vec{\tilde{L}}$ satisfies the same
maximum principle as $\vec{L}$, it suffices to observe that the elements of $\tilde{A}(x,t)$ satisfy \emph{a fortiori} the same inequalities as those of $A(x,t)$.
\begin{lemma}\label{stab} Let $A(x,t)$ satisfy (\ref{a1}) and (\ref{a2}). If $\vec{\psi}$ is any function in the domain of $\;\vec L,\;$
 then, for each $i, \; 1 \leq i \leq n $ and $(x,t)\in \overline{\Omega}$, \[|\psi_i(x,t)| \le\;
\max\ds\left\{\parallel\vec{\psi}\parallel_{\Gamma}, \dfrac{1}{\alpha}\parallel \vec L\vec \psi\parallel\right\}.\]
\end{lemma}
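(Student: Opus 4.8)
The plan is to prove this as a standard consequence of the maximum principle in Lemma \ref{max}, using a barrier-function argument. Set $M = \max\{\|\vec\psi\|_\Gamma,\ \alpha^{-1}\|\vec L\vec\psi\|\}$, and define the two comparison functions $\vec\psi^{\pm} = M\vec e \pm \vec\psi$, where $\vec e = (1,\dots,1)^T$ is the vector all of whose entries equal $1$. The goal is to show $\vec\psi^{\pm}\ge\vec 0$ on $\overline\Omega$, since the $i$-th component of that pair of inequalities is exactly $|\psi_i(x,t)|\le M$.

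First I would check the boundary hypothesis: on $\Gamma$ we have $\psi^{\pm}_i = M \pm \psi_i \ge \|\vec\psi\|_\Gamma \pm \psi_i \ge 0$ for every $i$, since $M\ge\|\vec\psi\|_\Gamma\ge|\psi_i|$ there. Next I would compute $\vec L\vec\psi^{\pm}$ on $\Omega$. Because $\vec e$ is constant in $x$ and $t$, the time-derivative and second-derivative terms annihilate it, so $(\vec L(M\vec e))_i = M\sum_{j=1}^n a_{ij}(x,t)$. Hence
\[
(\vec L\vec\psi^{\pm})_i = M\sum_{j=1}^n a_{ij}(x,t) \pm (\vec L\vec\psi)_i \ge M\alpha - \|\vec L\vec\psi\| \ge 0,
\]
where the first inequality uses assumption \eqref{a2} (so $\sum_j a_{ij}\ge\alpha>0$), and the last uses $M\ge\alpha^{-1}\|\vec L\vec\psi\|$. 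Thus $\vec\psi^{\pm}$ satisfies the hypotheses of Lemma \ref{max}, which yields $\vec\psi^{\pm}\ge\vec 0$ on $\overline\Omega$, i.e.\ $|\psi_i(x,t)|\le M$ for all $i$ and all $(x,t)\in\overline\Omega$, as required. \eop

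There is no serious obstacle here; the only point requiring a little care is the computation of $\vec L$ applied to the constant barrier $M\vec e$ — one must use \eqref{a2} in the form $\min_i\sum_j a_{ij}(x,t)>\alpha$ to absorb the reaction term, and note that the off-diagonal sign condition in \eqref{a1} is not needed for this particular estimate (it is already built into the validity of Lemma \ref{max}). One should also recall that $\vec\psi^{\pm}$ lies in the domain of $\vec L$ because $\vec\psi$ does and $M\vec e$ is smooth, so Lemma \ref{max} genuinely applies.
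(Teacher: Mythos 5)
Your proof is correct and follows exactly the same barrier-function approach as the paper: define $M\vec e\pm\vec\psi$, verify non-negativity on $\Gamma$ and that $\vec L$ applied to it is non-negative on $\Omega$ via \eqref{a2}, then invoke Lemma \ref{max}. You have simply spelled out the verification steps that the paper leaves to the reader.
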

\begin{proof}Define the two functions \[\vec \theta^\pm(x,t)\;=\;
\max\ds\left\{\parallel\vec{\psi}\parallel_{\Gamma},\;\dfrac{1}{\alpha}\parallel\vec{L}\vec \psi\parallel\right\}\vec e\;\pm\;\vec \psi(x,t)\] where $\;\vec
e\;=\;(1,\;\ldots,\;1)^T\;$ is the unit column vector. Using the properties of $\;A\;$ it is not hard to verify that $\vec \theta^\pm\;\ge\;\vec 0$ on $\Gamma$ and
$\;\vec L\vec \theta^\pm\;\ge\;\vec 0$ on $\Omega.$ It follows from Lemma \ref{max} that $\;\vec \theta^\pm\;\ge\;\vec 0\;$ on $\overline{\Omega}$ as required.\eop
\end{proof}
A standard estimate of the exact solution and its derivatives is
contained in the following lemma.
\begin{lemma}\label{lexact} Let $A(x,t)$ satisfy (\ref{a1}) and (\ref{a2})
and let $\vec u$ be the exact solution of (\ref{BVP}). Then, for all $(x,t)\in \oln \Omega$ and each $i=1,\dots,n$,
\begin{equation*}\label{e1}
%\ds\left.
\begin{array}{lcl}
|\frac{\partial^l u_i}{\partial t^l}(x,t)| &\leq& C(||\vec{u}||_\Gamma+\sum_{q=0}^{l}||\frac{\partial^q\vec{f}}{\partial t^q}||),\;\; l= 0,1,2\\\\
|\frac{\partial^l u_i}{\partial x^l}(x,t)| &\leq& C\eps_i^{\frac{-l}{2}}(||\vec{u}||_\Gamma+||\vec{f}||+||\frac{\partial\vec{f}}{\partial t}||), \;\; l=1,2\\\\
|\frac{\partial^l u_i}{\partial x^l}(x,t)| &\leq&
C\eps^{-1}_i \eps^{\frac{-(l-2)}{2}}_1 (||\vec{u}||_\Gamma+||\vec{f}||+
||\frac{\partial\vec{f}}{\partial t}||+||\frac{\partial^2 \vec f}{\partial t^2}||+
\eps^{\frac{l-2}{2}}_1||\frac{\partial^{l-2} \vec f}{\partial x^{l-2}}||), \;\; l=3,4\\\\
|\frac{\partial^l u_i}{\partial^{l-1}x \partial t}(x,t)| &\leq& C\eps_i^{\frac{1-l}{2}}(||\vec{u}||_\Gamma+||\vec{f}||+||\frac{\partial\vec{f}}{\partial
t}||+||\frac{\partial^2 \vec f}{\partial t^2}||), \;\; l=2,3.
\end{array}
%\right \}
\end{equation*}
\end{lemma}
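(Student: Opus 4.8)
The plan is to establish the four families of bounds in an order in which each relies only on its predecessors, using throughout the stability estimate of Lemma~\ref{stab} and the fact that $E$ is a constant matrix. The case $l=0$ of the first family is immediate from Lemma~\ref{stab}. Differentiating $\vec L\vec u=\vec f$ repeatedly in $t$ shows that, for $l=1,2$, the vector $\partial_t^l\vec u$ satisfies a system of exactly the same type, namely $\vec L(\partial_t^l\vec u)=\partial_t^l\vec f-\sum_{k=1}^l\binom{l}{k}(\partial_t^k A)\,\partial_t^{\,l-k}\vec u$ on $\Omega$. Its boundary values are controlled by the data: on $\Gamma_L$ and $\Gamma_R$ they equal $\partial_t^l\vec\phi_L$ and $\partial_t^l\vec\phi_R$, while on $\Gamma_B$ they are obtained by repeated use of the equation together with $\vec u(\cdot,0)=\vec\phi_B$, for instance $\partial_t\vec u(x,0)=\vec f(x,0)-A(x,0)\vec\phi_B(x)+E\vec\phi_B''(x)$; here the assumed smoothness, including the implied corner-compatibility of the data, is used. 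Applying Lemma~\ref{stab} to $\partial_t^l\vec u$ and inducting on $l$ then gives the first family.

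For the space derivatives I would first treat $l=2$ algebraically: from $\eps_i\partial_x^2u_i=\partial_tu_i+\sum_ja_{ij}u_j-f_i$ and the bounds already in hand one obtains $|\partial_x^2u_i|\le C\eps_i^{-1}(\|\vec u\|_\Gamma+\|\vec f\|+\|\partial_t\vec f\|)$. The case $l=1$ is the one place where no algebraic shortcut is available: given $(x,t)$, choose $a\in[0,1]$ with $x\in[a,a+\sqrt{\eps_i}]\subseteq[0,1]$ (possible since $\eps_i\le1$); by the mean value theorem there is a point $\xi$ in the interior with $\partial_xu_i(\xi,t)=\eps_i^{-1/2}\bigl(u_i(a+\sqrt{\eps_i},t)-u_i(a,t)\bigr)$, so $|\partial_xu_i(\xi,t)|\le2\eps_i^{-1/2}\|u_i\|$, and then $\partial_xu_i(x,t)=\partial_xu_i(\xi,t)-\int_x^{\xi}\partial_x^2u_i(s,t)\,ds$ together with the $l=2$ bound yields $|\partial_xu_i|\le C\eps_i^{-1/2}(\|\vec u\|_\Gamma+\|\vec f\|+\|\partial_t\vec f\|)$.

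The mixed derivatives follow by applying exactly these two arguments to the $t$-differentiated system $\vec L(\partial_t\vec u)=\partial_t\vec f-(\partial_t A)\vec u=:\tilde{\vec f}$; since the first family already bounds $\|\partial_t\vec u\|_\Gamma$, $\|\tilde{\vec f}\|$ and $\|\partial_t\tilde{\vec f}\|$ by $C(\|\vec u\|_\Gamma+\|\vec f\|+\|\partial_t\vec f\|+\|\partial_t^2\vec f\|)$, replacing $u_i$ by $\partial_tu_i$ gives $|\partial_x\partial_tu_i|\le C\eps_i^{-1/2}(\cdots)$ and $|\partial_x^2\partial_tu_i|\le C\eps_i^{-1}(\cdots)$, which is the fourth family. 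Finally, differentiating the $i$-th equation once, respectively twice, in $x$ gives $\eps_i\partial_x^3u_i=\partial_x(\partial_tu_i+\sum_ja_{ij}u_j-f_i)$ and $\eps_i\partial_x^4u_i=\partial_x^2(\partial_tu_i+\sum_ja_{ij}u_j-f_i)$; the right-hand sides involve $\partial_x^mu_j$ with $m\le2$, respectively $m\le1$, and $\partial_x^m\partial_tu_i$ with $m=1$, respectively $m=2$. Bounding the former by the second family and the latter by the fourth, and using the ordering $\eps_1<\cdots<\eps_n$ so that $\eps_j^{-1/2}\le\eps_1^{-1/2}$ and $\eps_j^{-1}\le\eps_1^{-1}$ for every $j$, one collects precisely the factor $\eps_i^{-1}\eps_1^{-(l-2)/2}$ together with the stated right-hand side, which completes the third family.

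I expect the one genuinely delicate point to be the $l=1$ space-derivative estimate (and its analogue inside the mixed-derivative step): the sharp power $\eps_i^{-1/2}$ must be extracted by an interpolation-type argument, and some care is needed to keep the constant uniform in all of the parameters. Everything else is algebraic manipulation of the differentiated equations once Lemma~\ref{stab} is in hand. A secondary technical nuisance is the verification in the first step that $\partial_t^l\vec u$ is bounded on $\Gamma$, which forces one to invoke the compatibility conditions hidden in the phrase ``the required smoothness of the problem data is assumed.''
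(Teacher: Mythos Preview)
Your proposal is correct and follows essentially the same approach as the paper: Lemma~\ref{stab} for the pure-time derivatives, the mean-value argument on an interval of length $\sqrt{\eps_i}$ combined with integration of $\partial_x^2 u_i$ (expressed via the equation) for $\partial_x u_i$, algebraic rearrangement for $\partial_x^2 u_i$, and then ``analogous steps'' for the mixed and higher-order derivatives. One small slip to fix in your final paragraph: the orders ``$m\le2$, respectively $m\le1$'' should be swapped, since $\partial_x(a_{ij}u_j)$ produces $\partial_x^m u_j$ with $m\le1$ and $\partial_x^2(a_{ij}u_j)$ produces $m\le2$; this does not affect your conclusion.
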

\begin{proof}
The bound on $\vec{u}$ is an immediate consequence of Lemma \ref{stab}.
%Differentiating (\ref{BVP}) partially with respect to 't' and applying Lemma \ref{stab} for $ \frac{\partial \vec u}{\partial t}$, the required bound for $ \frac{\partial \vec u}{\partial t} $ is obtained.
%Differentiating (\ref{BVP}) twice partially with respect to 't' and applying Lemma 2 for $ \frac{\partial^2 \vec u}{\partial t^2}$ the required bound for $ \frac{\partial^2 \vec u}{\partial t^2} $ is obtained.
Differentiating (\ref{BVP}) partially with respect to '$t$' once and twice, and applying Lemma \ref{stab} the bounds $ \frac{\partial \vec u}{\partial t}$, and $
\frac{\partial^2 \vec u}{\partial t^2} $ are obtained.
To bound $\frac{\partial u_i}{\partial x}$, for all $i$ and any $(x,t)$, consider an interval $I=(a, a+\sqrt \eps_i)$ such that $x \in I$.\\
Then for some $y\in I$ and $t \in (0,T]$
\[\s\frac{\partial u_i}{\partial x}(y,t)=\frac{u_i(a+\sqrt{\eps_i},t)-u_i(a,t)}{\sqrt{\eps_i}}\]

\begin{equation} \label{e2}
|\frac{\partial u_i}{\partial x}(y,t)|\leq C {\eps_i}^{\frac{-1}{2}}||\vec u||.\s\s\;\;\;
\end{equation}
Then for any $ x \in I $
\[\frac{\partial u_i}{\partial x}(x,t) =\frac{\partial u_i}{\partial x}(y,t)+\int_y^x\frac{\partial ^2 u_i(s,t)}{\partial x^2}ds \]
\[\frac{\partial u_i}{\partial x}(x,t) =\frac{\partial u_i}{\partial x}(y,t)+\eps_i^{-1}\int_y^x\left(\frac{\partial u_i(s,t)}{\partial t} - f_i(s,t)+\sum_{j=1}^{n} a_{ij}(s,t)u_j(s,t)\right)ds \]
\[|\frac{\partial u_i}{\partial x}(x,t)| \leq |\frac{\partial u_i}{\partial x}(y,t)|+C\eps_i^{-1}\int_y^x(||\vec u||_\Gamma + ||\vec f||+||\frac{\partial \vec f}{\partial t}||)ds.\]
Using (\ref{e2}) in the above equation
\[|\frac{\partial u_i}{\partial x}(x,t)|\leq C {\eps_i}^{\frac{-1}{2}}(||\vec u||_\Gamma+||\vec f||+||\frac{\partial \vec f}{\partial t}||).\]

Rearranging the terms in (\ref{BVP}), it is easy to get
\[|\frac{\partial^2 u_i}{\partial x^2}|\leq C \eps_i^{-1}(||\vec u||_{\Gamma}+||\vec f||+||\frac{\partial \vec f}{\partial t}||).\]
%Differentiating the equation (1) partially with respect to $x$ and rearranging the terms, the bounds of $\frac{\partial^3 u_i}{\partial x^3}$ and $\frac{\partial^4 u_i}{\partial x^4}$
%are obtained. Steps analogous to those used in the derivation of bounds for $\frac{\partial u_i}{\partial x}$ are used with $\frac{\partial u_i}{\partial t}$ to get the bounds for $\frac{\partial^2 u_i}{\partial x \partial t}$. Differentiating and rearranging the terms of (1) and using the appropriate bounds available, it is easy to bound $\frac{\partial^3 u_i}{\partial x^2 \partial t}$
Analogous steps are used to get the rest of the estimates. \s\s\s\eop \end{proof}
%%%%%%%%%%%%%%%%%%%%%%%%
The Shishkin decomposition of the exact solution $\;\vec{u}\;$ of
(\ref{BVP}) is $\;\vec{u}=\vec{v}+\vec{w}\;$ where the smooth
component $\;\vec v\;$ is the solution of
\begin{equation}\label{smoothcomp}\;\vec L\vec v = \vec f\; \; \mathrm{in} \; \Omega,
 \;\vec
v = \vec u_0\; \; \mathrm{on} \; \Gamma
\end{equation} and the singular component $\;\vec w\;$
is the solution of \begin{equation}\label{singularcomp} \vec L\vec
w\;=\;\vec 0 \; \mathrm{in} \; \Omega, \;\vec w = \vec u-\vec v\; \;
\mathrm{on} \; \Gamma.
\end{equation} For convenience the left and right boundary layers of
$\vec w$ are separated using the further decomposition $\vec w
=\vec{w}^L+\vec{w}^R$ where $\vec{L}\vec{w}^L=\vec{0}$\;on
$\Omega,\; \vec{w}^L=\vec w$ \; on $\Gamma_L,\; \vec{w}^L=\vec{0}$
on $\Gamma_B \cup \Gamma_R$ and $\vec{L}\vec{w}^R= \vec{0}$ \; on
$\Omega,\; \vec{w}^R=\vec{w}$\;on $\Gamma_R$, $\vec{w}^R=\vec{0}$
on $\Gamma_L \cup \Gamma_B$.\\
Bounds on the smooth component and its derivatives are contained in
\begin{lemma}\label{lsmooth1} Let $A(x,t)$ satisfy (\ref{a1}) and (\ref{a2}).
Then the smooth component $\vec v$ and its derivatives satisfy, for all $(x,t) \in \overline \Omega$ and each $i=1,\dots,n$,
\begin{equation*} \label{e5}
%\ds \left.
\begin{array}{lcl}
|\frac{\partial^l v_i}{\partial t^l}(x,t)| &\leq& C \; \mathrm{for}\; l=0,1,2\\\\
|\frac{\partial^l v_i}{\partial x^l}(x,t)| &\leq& C(1+\eps_i^{1-\frac{l}{2}})\; \mathrm{for}\; l=0,1,2,3,4\\\\
|\frac{\partial^l v_i}{\partial x^{l-1} \partial t}(x,t)| &\leq& C\;\; \text{for}\;\; l=2,3.
%|\frac{\partial^3 v_i}{\partial x^2 \partial t}(x,t)| &\leq& C.
\end{array}
%\right \}
\end{equation*}
\end{lemma}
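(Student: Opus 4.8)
The plan is to construct an explicit decomposition of the smooth component $\vec v$ into a sequence of terms of decreasing regularity, following the standard technique for reaction-diffusion systems (as in \cite{PVM}). First I would write $\vec v = \vec v_0 + \vec v_1 + \cdots$, where $\vec v_0 = \vec u_0$ solves the reduced problem $\frac{\partial \vec v_0}{\partial t} + A\vec v_0 = \vec f$, and successive correctors $\vec v_k$ satisfy $\vec L \vec v_k = E\frac{\partial^2 \vec v_{k-1}}{\partial x^2}$ with appropriate boundary data. Since the reduced problem is a first-order-in-time system of ODEs (in $t$) with no small parameters, its solution $\vec v_0$ and all its $x$- and $t$-derivatives are bounded by $C$ independently of $\vec\eps$; the same holds for the lower-order correctors. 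At some truncation level one is left with a remainder that again solves a problem of the form \eqref{smoothcomp} but with a smooth right-hand side and boundary data that are $O(\eps_i)$ in the relevant sense, so that applying Lemma \ref{lexact} to the remainder yields the factor $\eps_i^{1-l/2}$ rather than $\eps_i^{-l/2}$.

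Concretely, the bounds on $\frac{\partial^l v_i}{\partial t^l}$ for $l=0,1,2$ follow immediately once one knows $\vec v_0$ has bounded $t$-derivatives and the correctors do too: differentiate the defining equations \eqref{smoothcomp} in $t$ and apply Lemma \ref{stab} exactly as in the proof of Lemma \ref{lexact}, but now with a right-hand side that is uniformly bounded. For the $x$-derivatives, $l=0,1,2$ are handled by the decomposition argument above: each piece contributes a bounded term, plus a remainder to which Lemma \ref{lexact}'s second line is applied after noting the remainder's data has the extra $\eps_i$ factor. For $l=3,4$ one carries the decomposition one step further and invokes the third line of Lemma \ref{lexact} for the remainder, checking that the $\eps_i^{-1}\eps_1^{-(l-2)/2}$ factor there is tamed to $\eps_i^{1-l/2}$ by the improved bounds on the data — this is where one must be careful about which parameter appears. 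The mixed derivatives $\frac{\partial^l v_i}{\partial x^{l-1}\partial t}$, $l=2,3$, are obtained by differentiating the whole construction once in $t$ and repeating: since $t$-differentiation does not disturb the $\eps$-dependence (the reduced problem stays parameter-free), these come out bounded by $C$.

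The main obstacle I anticipate is bookkeeping the boundary and initial data for the correctors and the remainder so that the claimed powers of $\eps_i$ genuinely emerge, and in particular ensuring the remainder problem is itself of reaction-diffusion type with data that is compatible and sufficiently smooth — this requires the implicit smoothness and compatibility assumptions on $\vec f$, $\vec\phi_L$, $\vec\phi_B$, $\vec\phi_R$ at the corners of $\overline\Omega$. One must also verify that the correctors $\vec v_k$, defined by $\vec L\vec v_k = E\frac{\partial^2\vec v_{k-1}}{\partial x^2}$, inherit bounded derivatives: this is itself an induction using Lemma \ref{lexact} with a bounded right-hand side, and the argument closes because each application of $E$ contributes an $\eps_i$ that compensates the $\eps_i^{-1}$ in the bound. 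The cleanest route is to cite the analogous construction in \cite{PVM} for the estimates up to $l=2$ and present in detail only the new third- and fourth-order bounds and the mixed derivatives, since those are what the rest of the paper's truncation-error analysis genuinely needs.
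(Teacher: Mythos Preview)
Your asymptotic-expansion approach is viable and standard in the literature, but it is genuinely different from the paper's proof. The paper does not decompose $\vec v$ at all. Instead, it differentiates \eqref{smoothcomp} twice in $x$, applies Lemma~\ref{stab} to the resulting equation for $\partial^2 \vec v/\partial x^2$ to obtain $\|\partial^2 \vec v/\partial x^2\| \leq C(1+\|\partial\vec v/\partial x\|)$, and then closes this with an interpolation step: at the point $(x^*,t^*)$ where $\partial v_{i^*}/\partial x$ attains its supremum, a Taylor expansion with a free step $y$ gives $\|\partial\vec v/\partial x\|\leq \tfrac{2}{y}\|\vec v\|+\tfrac{y}{2}\|\partial^2\vec v/\partial x^2\|$; substituting and taking $y$ small yields $\|\partial^2\vec v/\partial x^2\|\leq C$ and then $\|\partial\vec v/\partial x\|\leq C$. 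The mixed derivatives and the cases $l=3,4$ are then obtained by repeating the same bootstrap with $\partial\vec v/\partial t$, respectively $\partial^2\vec v/\partial x^2$, playing the role of $\vec v$.

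Your route buys a transparent accounting of where the factors $\eps_i^{1-l/2}$ come from and transfers directly from the ODE setting in \cite{PVM}, but in the system case you must ensure that the componentwise gain of $\eps_i$ in the remainder's data is not washed out when you invoke Lemma~\ref{lexact}, whose bounds are phrased in terms of $\|\vec f\|$ rather than $|f_i|$; the issue you flag (``which parameter appears'') is real and typically forces either a componentwise sharpening of that lemma or one further layer in the expansion. The paper's interpolation argument sidesteps Lemma~\ref{lexact} for $\vec v$ entirely and needs no corrector bookkeeping or corner-compatibility tracking, at the price of leaving the $l=3,4$ details to the reader.
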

\begin{proof}
The bound on $\vec{v}$ is an immediate consequence of the defining
equations for $\vec{v}$ and Lemma (\ref{stab}). Differentiating the
equation \eqref{smoothcomp} twice partially with respect to $x$ and
applying Lemma \ref{stab} for $\frac{\partial^2 v_i}{\partial x^2}$,
we get
\begin{equation}\label{e5a}
|\frac{\partial^2v_i}{\partial x^2}|\leq C(1+||\frac{\partial \vec v}{\partial x}||).
\end{equation}
Let
\begin{equation}\label{e6}
\frac{\partial v_{i^*}}{\partial x}(x^*,t*) = ||\frac{\partial
\vec v}{\partial x}|| \s \text{for some}\; i=i^* , \; x=x^*,\;
t=t^*.
\end{equation}
Using Taylor expansion, it follows that, for some $ y \in [0,1-x^*]$ and some $\eta \in (x^*, x^*+y)$
\begin{equation}\label{e7} v_{i^*}(x^*+y,t^*)=v_{i^*}(x^*,t^*)+y\frac{\partial v_{i^*}}{\partial x}(x^*,t^*)+\frac{y^2}{2} \frac{\partial^2 v_{i^*}}{\partial x^2}(\eta,t^*).
\end{equation}
Rearranging (\ref{e7}) yields
\[ \frac{\partial v_{i^*}}{\partial x}(x^*,t^*)= \frac{v_{i^*}(x^*+y,t^*)-v_{i^*}(x^*,t^*)}{y}-\frac{y}{2} \frac{\partial^2v_{i^*}}{\partial x^2}(\eta,t^*) \]
\begin{equation} \label{e8}
|\frac{\partial v_{i^*}}{\partial x}(x^*,t^*)| \leq
\frac{2}{y}||\vec v||+\frac{y}{2}||\frac{\partial^2 \vec
v}{\partial x^2}||.
\end{equation}
Using (\ref{e6}) and (\ref{e8})  in (\ref{e5a}),
%$|\frac{\partial^2v_i}{\partial x^2}|\leq C(1+||\frac{\partial \vec v}{\partial x}||)$
\[|\frac{\partial^2v_i}{\partial x^2}|\leq C(1+\frac{2}{y}||\vec v||+\frac{y}{2}||\frac{\partial ^2 \vec v}{\partial x^2}||).\]
This leads to
\[(1-\frac{Cy}{2}) ||\frac{\partial^2 \vec v}{\partial x^2}||\leq C(1+\frac{2}{y}||\vec v||)\]
or\\
\begin{equation}\label{e9}
||\frac{\partial^2 \vec v}{ \partial x^2}|| \leq C.
\end{equation}
Using (\ref{e9}) in (\ref{e8}) yields
\[||\frac{\partial \vec v}{\partial x}|| \leq C. \]
Repeating the above steps with $\frac{\partial v_i}{\partial t}$,
it is easy to get the required bounds on the mixed derivatives.
The bounds on $\dfrac{\partial^3 \vec v}{\partial
x^3},\;\;\dfrac{\partial^4 \vec v}{\partial x^4} $ are derived by
a similar argument.\s\s\s\s\s\s\s\s\s\s\s\s\s\s\eop
\end{proof}
\section{Improved estimates}
The layer functions $B^{L}_{i}, \; B^{R}_{i}, \; B_{i}, \; i=1,\;
\dots , \; n,\;$, associated with the solution $\;\vec u$, are
defined on $[0,1]$ by
\[B^{L}_{i}(x) = e^{-x\sqrt{\alpha/\eps_i}},\;B^{R}_{i}(x) =
B^{L}_{i}(1-x),\;B_{i}(x) = B^{L}_{i}(x)+B^{R}_{i}(x).\] The
following elementary properties of these layer functions, for all $1
\leq i < j \leq n$ and $0 \leq x < y \leq
1$, should be noted:\\
(a)\;$B^{L}_i(x)\; <\; B^{L}_j(x),\;\;B^{L}_i(x)\;
>\; B^{L}_i(y), \;\;0\;<\;B^{L}_i(x)\;\leq\;1$.\\
(b)\;$B^{R}_i(x)\; <\; B^{R}_j(x),\;\;B^{R}_i(x)\; <\;
B^{R}_i(y), \;\;0\;<\;B^{R}_i(x)\;\leq\;1$.\\
(c)\;$B_{i}(x)$ is monotone decreasing (increasing) for
increasing $x \in [0,\frac{1}{2}] ([\frac{1}{2},1])$.\\
(d)\;$B_{i}(x) \leq 2B_{i}^L(x)$ for $x \in [0,\frac{1}{2}]$,
\;$B_{i}(x) \leq 2B_{i}^R(x)$ for $x \in [\frac{1}{2},1]$.
\begin{definition}
For $B_i^L$, $B_j^L$, each $i,j, \;\;1 \leq i \neq j \leq n$ and
each $s, s>0$, the point $x^{(s)}_{i,j}$ is defined by
\begin{equation}\label{x1}\frac{B^L_i(x^{(s)}_{i,j})}{\varepsilon^s _i}=
\frac{B^L_j(x^{(s)}_{i,j})}{\varepsilon^s _j}. \end{equation}
\end{definition}
It is remarked that
\begin{equation}\label{x2}\frac{B^R_i(1-x^{(s)}_{i,j})}{\varepsilon^s _i}=
\frac{B^R_j(1-x^{(s)}_{i,j})}{\varepsilon^s _j}. \end{equation} In the next lemma the existence and uniqueness of the points $x^{(s)}_{i,j}$  are shown. Various
properties are also established.
\begin{lemma}\label{layers} For all $i,j$, such that $1 \leq i < j \leq
n$ and $0<s \leq 3/2$,  the points $x_{i,j}$ exist, are uniquely
defined and satisfy the following inequalities
\begin{equation}\label{x3}
\frac{B^L_{i}(x)}{\eps^s _i} > \frac{B^L_{j}(x)}{\eps^s _j},\;\; x \in [0,x^{(s)}_{i,j}),\;\; \frac{B^L_{i}(x)}{\eps^s _i} <
\frac{B^L_{j}(x)}{\eps^s _j}, \; x \in (x^{(s)}_{i,j}, 1].\end{equation}\\
Moreover
\begin{equation}\label{x4}x^{(s)}_{i,j}< x^{(s)}_{i+1,j}, \; \mathrm{if} \;\; i+1<j \;\;
\mathrm{and} \;\; x^{(s)}_{i,j}<
x^{(s)}_{i,j+1}, \;\; \mathrm{if} \;\; i<j. \end{equation} \\
Also
\begin{equation}\label{x5}
x^{(s)}_{i,j}< 2s\sqrt{\frac{\eps_j}{\alpha}}\;\; and \;\;
x^{(s)}_{i,j} \in
(0,\frac{1}{2})\;\; \mathrm{if} \;\; i<j. \end{equation}\\
Analogous results hold for the $B^R_i$, $B^R_j$ and the points $1-x^{(s)}_{i,j}.$\\
\end{lemma}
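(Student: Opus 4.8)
The plan is to reduce every assertion to the closed form for $x^{(s)}_{i,j}$ obtained by taking logarithms in \eqref{x1}. Put $p_k=\sqrt{\alpha/\eps_k}$, so that $p_1>\cdots>p_n$ because $\eps_1<\cdots<\eps_n$. Since $\ln\big(B^L_k(x)/\eps_k^{s}\big)=-p_kx-s\ln\eps_k$, equation \eqref{x1} is equivalent to the affine equation $-p_ix-s\ln\eps_i=-p_jx-s\ln\eps_j$, whose unique solution is
\[ x^{(s)}_{i,j}=\frac{s\ln(\eps_j/\eps_i)}{p_i-p_j}=2s\,\frac{\ln p_i-\ln p_j}{p_i-p_j}, \]
using $\ln\eps_k=\ln\alpha-2\ln p_k$. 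For $i<j$ the numerator and denominator are positive, so $x^{(s)}_{i,j}>0$; that $x^{(s)}_{i,j}<1$ (hence the point lies in $(0,1)$ and \eqref{x1} genuinely has a solution there) will come out of \eqref{x5} below. For \eqref{x3}, set $\Psi(x)=\ln\big(B^L_i(x)/\eps_i^{s}\big)-\ln\big(B^L_j(x)/\eps_j^{s}\big)=-(p_i-p_j)x+s\ln(\eps_j/\eps_i)$; this is affine in $x$ with negative slope $-(p_i-p_j)$ and vanishes precisely at $x^{(s)}_{i,j}$, so $\Psi>0$ on $[0,x^{(s)}_{i,j})$ and $\Psi<0$ on $(x^{(s)}_{i,j},1]$, and \eqref{x3} follows since $\exp$ is increasing.

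The key observation for \eqref{x4} and \eqref{x5} is that $\tfrac1{2s}x^{(s)}_{i,j}=S(p_j,p_i)$, where $S(a,b)=\frac{\ln b-\ln a}{b-a}$ is the slope of a secant chord of the strictly concave function $\ln$ (note $p_j<p_i$ when $i<j$). Two elementary facts about a strictly concave differentiable $f$ will be invoked: (i) $f'(b)<\frac{f(b)-f(a)}{b-a}<f'(a)$ for $a<b$ (mean value theorem plus monotonicity of $f'$); and (ii) $a\mapsto\frac{f(b)-f(a)}{b-a}$ and $b\mapsto\frac{f(b)-f(a)}{b-a}$ are both strictly decreasing (the ``three chords'' property, read off the supporting-line inequality). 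Applying (i) to $f=\ln$ with $a=p_j$, $b=p_i$ gives $x^{(s)}_{i,j}<2s/p_j=2s\sqrt{\eps_j/\alpha}$; then $s\le\tfrac32$ together with $\sqrt{\eps_j/\alpha}\le\tfrac16$ from \eqref{a3} yields $x^{(s)}_{i,j}<\tfrac12$, which proves \eqref{x5} and, a posteriori, that the point is well defined. For \eqref{x4}: if $i+1<j$ then $p_j<p_{i+1}<p_i$, and (ii) in the second argument gives $S(p_j,p_{i+1})>S(p_j,p_i)$, i.e. $x^{(s)}_{i+1,j}>x^{(s)}_{i,j}$; if $i<j$ then $p_{j+1}<p_j<p_i$, and (ii) in the first argument gives $S(p_{j+1},p_i)>S(p_j,p_i)$, i.e. $x^{(s)}_{i,j+1}>x^{(s)}_{i,j}$.

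For the right-layer statement, use $B^R_k(x)=B^L_k(1-x)$: the substitution $x\mapsto1-x$ turns \eqref{x1} evaluated at $x^{(s)}_{i,j}$ into \eqref{x2}, and it carries the inequalities \eqref{x3}, the orderings \eqref{x4} and the bounds \eqref{x5} verbatim over to $B^R_i,B^R_j$ and the reflected abscissae $1-x^{(s)}_{i,j}$, with the roles of ``left of'' and ``right of'' interchanged on $[\tfrac12,1]$.

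I expect the only step that is more than bookkeeping is fact (ii) --- equivalently, that $u\mapsto\frac{\ln u}{u-1}$ is decreasing and $u\mapsto\frac{u\ln u}{u-1}$ increasing on $(1,\infty)$ --- and this is a one-line calculus argument resting on $\ln u<u-1$. So there is no genuine obstacle; the work is in choosing the change of variables ($p_k$ and logarithms) so that the monotonicity of $x^{(s)}_{i,j}$ becomes the monotonicity of secant slopes of a concave function.
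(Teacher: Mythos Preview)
Your proof is correct and follows essentially the same approach as the paper: derive the closed form for $x^{(s)}_{i,j}$ by taking logarithms in \eqref{x1}, read off existence/uniqueness/\eqref{x3} from the monotonicity of the (log of the) ratio, and deduce \eqref{x4}--\eqref{x5} from a convexity/concavity inequality. The only cosmetic difference is the change of variables: the paper sets $\sqrt{\eps_k}=e^{-p_k}$ and reduces \eqref{x4} to the monotonicity of $(\exp a-1)/a$, whereas you set $p_k=\sqrt{\alpha/\eps_k}$ and phrase both \eqref{x4} and \eqref{x5} uniformly as secant-slope monotonicity of the concave function $\ln$ --- a slightly cleaner packaging of the same idea.
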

\begin{proof} Existence, uniqueness and (\ref{x3}) follow
from the observation that the ratio of the two sides of (\ref{x1}),
namely
\[\frac{B^L_{i}(x)}{\eps^s _i}\frac{\eps^s _j}{B^L_{j}(x)}=
\frac{\eps^s _j}{\eps^s _i} \exp{(-\sqrt{\alpha}
x(\frac{1}{\sqrt{\eps_i}}-\frac{1}{\sqrt{\eps_j}}))},\] is
monotonically decreasing from the value $\frac{\eps^s_j}{\eps^s_i}
>1$ as $x$ increases
from $0$.\\
The point $x^{(s)}_{i,j}$ is the unique point $x$ at which this
ratio has the value $1.$ Rearranging (\ref{x1}), and using the
inequality $\ln x <x-1$ for all $x>1$, gives
\begin{equation}\label{x5}x^{(s)}_{i,j} = 2s\ds\left[
\frac{\ln(\frac{1}{\sqrt{\eps_i}})-
\ln(\frac{1}{\sqrt{\eps_j}})}{\sqrt{\alpha}(\frac{1}{\sqrt{\eps_i}}-
\frac{1}{\sqrt{\eps_j}})}\right]=
\frac{2s\;\ln(\frac{\sqrt{\eps_j}}{\sqrt{\eps_i}})}{\sqrt{\alpha}(\frac{1}{\sqrt{\eps_i}}-
\frac{1}{\sqrt{\eps_j}})}<
2s\sqrt{\frac{\eps_j}{\alpha}},\end{equation} which
is the first part of (\ref{x5}). The second part follows immediately from this and (\ref{a3}).\\
To prove (\ref{x4}), writing $\sqrt{\eps_k} = \exp(-p_k)$, for some
$p_k
> 0$ and all $k$, it follows that
\[x^{(s)}_{i,j}=\frac{2s(p_i -p_j)}{\sqrt{\alpha}(\exp{p_i} -\exp{p_j})}.\] The
inequality $x^{(s)}_{i,j}< x^{(s)}_{i+1,j}$ is equivalent to
\[\frac{p_i -p_j}{\exp{p_i} -\exp{p_j}}<\frac{p_{i+1} -p_j}{\exp{p_{i+1}} -\exp{p_j}}, \]
which can be written in the form
\[(p_{i+1}-p_j)\exp(p_i-p_j)+(p_{i}-p_{i+1})-(p_{i}-p_j)\exp(p_{i+1}-p_j)>0. \]
With $a=p_i-p_j$ and $b=p_{i+1}-p_j$ it is not hard to see that
$a>b>0$ and $a-b=p_i-p_{i+1}$. Moreover, the previous inequality is
then equivalent to
\[\frac{\exp{a}-1}{a}>\frac{\exp{b}-1}{b}, \] which is true because $a>b$ and proves
the first part of (\ref{x4}). The second part is proved by a similar argument.\\ The analogous results for the $B^R_i$, $B^R_j$ and the points $1-x^{(s)}_{i,j}$ are
proved by a similar argument.\eop
\end{proof}
%{\bf Remark :} Analogous results are found to hold for the points $y_{i,j}$ also.\\\\
%
In the following lemma sharper estimates of the smooth component are
presented.
\begin{lemma}\label{lsmooth2}
Let $\;A(x,t)\;$ satisfy (\ref{a1}) and (\ref{a2}). Then the smooth component $\;\vec v\;$ of the solution $\;\vec u\;$ of \eqref{BVP} satisfies for all
$\;i=1,\cdots,n$ and all $\;(x,t)\;\in\oln\Omega$
\[ |\frac{\partial^l v_i}{\partial x^l}(x,t)| \leq  C\left (1+\sum_{q=i}^{n}\frac{B_q(x)}{\eps_q ^{\frac{l}{2}-1}}\right) \; \mathrm{for}\; l=0,1,2,3.\]
%\[|v_i^{(k)}(x)|\;\le\;C\;\ds\left(1+\sum_{q=i}^{n}\frac{B_q(x)}{\eps_q ^{\frac{k}{2}-1}}\right).\]
\end{lemma}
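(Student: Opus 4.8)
The plan is to refine the crude bounds of Lemma \ref{lsmooth1} by decomposing the smooth component $\vec v$ itself into a sum of sub-components, one for each parameter scale, and then to apply a barrier-function argument at each level using the layer functions $B_q$ and the maximum principle of Lemma \ref{max} (together with its validity for every principal sub-operator $\vec{\tilde L}$). Concretely, I would write $\vec v = \sum_{q=0}^{n}\vec v_q$ (or an analogous telescoping decomposition), where $\vec v_0$ solves the reduced problem and each successive correction $\vec v_q$ captures the contribution of the $q$-th diffusion coefficient $\eps_q$, so that $\vec v_q$ and its $x$-derivatives are controlled by $C(1+\eps_q^{1-l/2}B_q)$ on the strip where $\eps_q$ is the dominant small parameter. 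Summing these contributions over $q=i,\dots,n$ for the $i$-th component then yields the claimed estimate $|\partial_x^l v_i| \le C(1+\sum_{q=i}^n B_q(x)/\eps_q^{l/2-1})$.

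The key steps, in order, would be: (i) set up the sub-decomposition of $\vec v$ and verify that each $\vec v_q$ satisfies an equation of the form $\vec L \vec v_q = \vec g_q$ with right-hand side $\vec g_q$ built from lower-order derivatives of $\vec v_{q-1}$, together with compatible boundary data; (ii) establish the $l=0$ bound $|v_{q,i}| \le C(1+B_q(x))$ for the appropriate components by constructing the barrier $\vec\Psi^\pm = C(1 + B_q)\vec e \pm \vec v_q$ (restricted, where necessary, to a principal sub-system indexed by $\{q,q+1,\dots,n\}$, as in the remark following Lemma \ref{max}), checking $\vec L\vec\Psi^\pm \ge \vec 0$ using the defining inequality $-\eps_q (B_q^L)'' + \alpha B_q^L \le 0$ and hypotheses (\ref{a1})--(\ref{a2}); (iii) bootstrap to the derivative bounds $l=1,2,3$ by differentiating the sub-equations in $x$, using Lemma \ref{lexact}-type interval arguments (as in the proof of Lemma \ref{lsmooth1}) to trade one factor of $\eps_q^{-1/2}$ per derivative, and re-applying the barrier argument to each differentiated system; (iv) sum over $q$ from $i$ to $n$ for the $i$-th component, absorbing the $O(1)$ terms into a single constant $C$ and invoking property (a)--(d) of the layer functions to keep the bound in the stated form.

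The main obstacle I expect is step (ii)--(iii): designing the sub-components so that each $\vec v_q$ genuinely decays like $B_q$ rather than like some product of layer functions, and ensuring that the barrier inequality closes at every level. This requires the ordering $\eps_1 < \cdots < \eps_n$ and assumption (\ref{a3}) (so that $\sqrt{\eps_q}$ is small enough relative to $\sqrt\alpha$ for the barrier to dominate, exactly as in the cancellation $1 - Cy/2$ appearing in Lemma \ref{lsmooth1}), and it is delicate because the coupling matrix $A$ mixes components across scales — hence the need to pass to principal sub-matrices and exploit that each such sub-operator inherits the maximum principle. A secondary technical point is the boundary data for the sub-components on $\Gamma_L$ and $\Gamma_R$: since $\vec v = \vec u_0$ on $\Gamma$ and the reduced solution need not vanish there, one must check that the residual boundary values assigned to each $\vec v_q$ are $O(1)$ (or are themselves absorbable into the $B_q$ barrier), which is where compatibility of the data and the smoothness assumption on the problem data are used.

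Once these estimates are in hand, the conclusion for $l=0,1,2,3$ is immediate by the triangle inequality, and the proof is complete. \eop
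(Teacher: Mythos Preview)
Your approach is different from the paper's, and more elaborate than necessary. The paper does \emph{not} decompose $\vec v$ into scale-by-scale sub-components $\vec v_q$. Instead it proceeds in two short steps. For $l=0,1,2$ it uses a single barrier $\vec\psi^{\pm}=C[1+B_n(x)]\vec e \pm \partial_x^l\vec v$ and Lemma~\ref{max} to obtain $|\partial_x^l v_i|\le C(1+B_n(x))$; since $\eps_q^{1-l/2}\le 1$ for $l\le 2$, this already implies the stated estimate. The real content is the case $l=3$: here the paper differentiates \eqref{smoothcomp} twice in $x$, sets $\vec p=\partial_x^2\vec v$, and observes that $\vec p$ satisfies a problem of exactly the same form as \eqref{BVP} with bounded data. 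It then applies the \emph{same} smooth/singular splitting $\vec p=\vec q+\vec r$ to $\vec p$ and invokes Lemma~\ref{lsmooth1} for $\partial_x q_i$ and Lemma~\ref{lsingular} for $\partial_x r_i$, which immediately gives $|\partial_x^3 v_i|=|\partial_x p_i|\le C(1+\sum_{q=i}^n B_q/\sqrt{\eps_q})$.

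So the paper's key idea is to recycle the existing decomposition machinery at the level of $\partial_x^2\vec v$, borrowing the layer bounds directly from Lemma~\ref{lsingular}; the sum $\sum_{q=i}^n$ and the $\eps_q^{-1/2}$ weights appear for free from that lemma. Your telescoping decomposition $\vec v=\sum_q \vec v_q$ could in principle be made to work, but it requires you to \emph{construct} the $\vec v_q$ so that each one is genuinely controlled by $B_q$ alone, and to verify this by a separate barrier argument at every level --- precisely the obstacle you flag in step~(ii)--(iii). That is substantially more work, and it duplicates what Lemma~\ref{lsingular} already provides. A minor point: your barrier inequality ``$-\eps_q (B_q^L)'' + \alpha B_q^L \le 0$'' is not quite the relevant one; what matters is that for the $i$-th row one has $-\eps_i(B_q^L)''+\alpha B_q^L=(1-\eps_i/\eps_q)\alpha B_q^L\ge 0$ when $i\le q$, which is why the sum starts at $q=i$.
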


\begin{proof}Define two barrier functions
\[\vec\psi^\pm(x,t)\;=\;C[1+B_n(x)]\vec e\;\pm\;\frac{\partial^l\vec v}{\partial x^l}(x,t),\;\;l=0,1,2\;\;\;\text{and}\;\;\;(x,t)\in\overline\Omega.\]

We find that, for a proper choice of C,

\[\psi_i^\pm(0,t)\;=\; C\;\pm\;\frac{\partial^l u_{0,i}}{\partial x^l}(0,t)=C \geq 0\]
\[\psi_i^\pm(1,t)\;=\; C\;\pm\;\frac{\partial^l u_{0,i}}{\partial x^l}(1,t)=C \geq 0\]
\[\psi_i^\pm(x,0)\;=\; C[1+B_n(x)]\;\pm\;\frac{\partial^l \phi_{B,i}(x)}{\partial x^l}= C[1+B_n(x)]\;\pm\;  C \geq\ 0\]
as $\phi_b(x) \in C^{(2)}(\Gamma_b)$
and $(\vec L \vec\psi^{\pm})_i(x,t)\geq 0$.\\
Using Lemma \ref{max}, we conclude that
\begin{equation}\label{e15}
|\frac{\partial^l v_i}{\partial x^l}(x,t)| \leq C[1+B_n(x)]\;\; \text{for}\;\;\; l=0,1,2.
\end{equation}
Consider the equation
\begin{equation}\label{e16}(\vec L(\frac{\partial^2 \vec v}{\partial x^2}))_i\;=\; \frac{\partial^2 f_i}{\partial x^2}-2 \frac{\partial \sum_{j=1}^{n} a_{ij}}{\partial x} \frac{\partial v_j}{\partial x}-\frac{\partial^2 \sum_{j=1}^{n}a_{ij}}{\partial x^2}v_j
\end{equation}

with\\
\begin{equation}\label{e17}
\frac{\partial^2 v_i}{\partial x^2}(0,t)\;=\;0,\frac{\partial^2 v_i}{\partial x^2}(1,t)\;=\;0, \frac{\partial^2 v_i}{\partial x^2}(x,0)\;=\;\frac{\partial^2
\phi_{b,i}(x)}{\partial x^2}.
\end{equation}
For convenience, let $\vec p$ denote $\frac{\partial^2\vec v}{\partial x^2}$. Then
\begin{equation}\label{e18}
\vec \L\vec p =\vec g \;\;\text{with}\;\; \vec p(0,t) =\vec 0,\;\; \vec p(1,t) =\vec 0,\;\; \vec p(x,0) =\vec s
\end{equation}
where \[g_i = \frac{\partial^2 f_i}{\partial x^2} -2\frac{\partial \sum_{j=1}^{n} a_{ij}}{\partial x}\frac{\partial v_j}{\partial x}-\sum_{j=1}^{n}\frac{\partial^2
a_{ij}}{\partial x^2}v_j \;\;\text{and}\;\; s_i =  \frac{\partial^2 \phi_{b,i}}{\partial x^2}(x).\] Let $\vec q$ and $\vec r$ be the smooth and singular components
of $\vec p$ given by
\begin{equation}\label{e19}
\vec L \vec q = \vec g \;\;\text{with}\;\; \vec q(0,t) =\vec p_0(0,t),\;\; \vec q(1,t) =\vec p_0(1,t),\;\; \vec q(x,0) =\vec p(x,0)
\end{equation}
where $\vec p_0$ is the solution of the reduced problem
\[\frac{\partial \vec p_0}{\partial t} + A \vec p_0=\vec g \;\;\text{with}\;\; \vec p_0(x,0) = \vec p(x,0)= \vec s. \]\\
Now,
\begin{equation}\label{e20}
\vec L \vec r = \vec 0,\;\;\text{with}\;\; \vec r(0,t) = - \vec q (0,t),\; \vec r(1,t) = -\vec q(1,t),\; \vec r(x,0) = \vec 0.
\end{equation}
Using Lemma \ref{lsmooth1} and Lemma \ref{lsingular}, we have for $i=1,\dots,n$ and $(x,t) \in \overline \Omega$
\[ |\frac{\partial q_i }{\partial x}(x,t)| \leq C\]
and \[|\frac{\partial r_i }{\partial x}(x,t)| \leq C[\frac{B_i(x)}{\sqrt {\eps_i}}+\dots+\frac{B_n(x)}{\sqrt {\eps_n}}].\]\\
Hence, for $(x,t) \in \overline \Omega\;$and $i=1,\dots,n$,
\begin{equation}\label{e21}
|\frac{\partial^3 v_i }{\partial x^3}| =|\frac{\partial p_i }{\partial x}| \leq C[1+\frac{B_i(x)}{\sqrt {\eps_i}}+\dots+\frac{B_n(x)}{\sqrt {\eps_n}}].
\end{equation}
From (\ref{e15}) and (\ref{e21}), we find that for $l=0,1,2,3$ and $(x,t)\in \overline \Omega$
\[|\frac{\partial^l v_i }{\partial x^l}| \leq C[1+\eps_i^{1-\frac{l}{2}}B_i(x)+\dots+
\eps_i^{1-\frac{l}{2}}B_n(x)].\s\s\s\eop\]  \end{proof}
{\bf Remark :} It is interesting to note that the above estimate
reduces to the estimate of the smooth component of the solution of
the scalar problem given in \cite{MORS} when $\;n=1.\;$\\
Bounds on the singular components $\vec{w}^L,\; \vec{w}^R$ of $\vec{u}$ and their derivatives are contained in\\
\begin{lemma}\label{lsingular} Let $A(x,t)$ satisfy (\ref{a1}) and
(\ref{a2}). Then there exists a constant $C,$ such that, for each $(x,t) \in \oln \Omega$ and $i=1,\; \dots , \; n$,
\begin{equation*} \begin{array} {l}
|\frac{\partial^l w^L_i}{\partial t^l}(x,t)| \le C B^L_{n}(x),\;\mathrm{for} \;\; l=0,1,2. \\\\
|\frac{\partial^l w^L_i}{\partial x^l}(x,t)| \le C\sum_{q=i}^n \frac{B^L_{q}(x)}{\eps_q^{\frac{l}{2}}},\;\mathrm{for}\;\; l=1,2.\\\\
|\frac{\partial^3 w^L_i}{\partial x^3}(x,t)| \le C\sum_{q=1}^n \frac{B^L_{q}(x)}{\eps_q^{\frac{3}{2}}}.\\\\
|\frac{\partial^4 w^L_i}{\partial x^4}(x,t)| \le C\frac{1}{\eps_i} \sum_{q=1}^n \frac{B^L_{q}(x)}{\eps_q}.
%|\frac{\partial^{l+1} w^L_i}{\partial x^l \partial t}(x,t)| \leq C\eps_i^{\frac{-l}{2}}(||\vec u||_\Gamma+||\vec{f}||+||\frac{\partial\vec{f}}{\partial t}||+||\frac{\partial^2\vec{f}}{\partial t^2}||),\;\mathrm{for} \;\; l=1,2.\\

%|\frac{\partial^2 w^L_i}{\partial x \partial t}(x,t)| \leq C\frac{1}{\sqrt{\eps_i}}((||\vec u||_\Gamma+||\vec{f}||+||\frac{\partial\vec{f}}{\partial t}||+||\frac{\partial^2\vec{f}}{\partial t^2}||).\\
%|\frac{\partial^3 w^L_i}{\partial x^2 \partial t}(x,t)| \leq C\frac{1}{\eps_i}((||u||_\Gamma+||\vec{f}||+||\frac{\partial\vec{f}}{\partial t}||+||\frac{\partial^2\vec{f}}{\partial t^2}||).\\
\end{array} \end{equation*}
Analogous results hold for $w^R_i$ and its derivatives.
\end{lemma}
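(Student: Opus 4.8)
The plan is to argue by induction on the number $n$ of equations, following the pattern of the corresponding result in \cite{PVM}. When $n=1$ the system is a scalar parabolic reaction-diffusion equation and all the stated estimates are classical (see \cite{MORS}). So suppose $n\ge 2$ and that the lemma holds for every such system of fewer than $n$ equations; observe that each principal subsystem of \eqref{BVP}, obtained by deleting some equations together with the corresponding unknowns, which are then transferred to the right-hand side, again satisfies hypotheses of the form \eqref{a1}--\eqref{a2} with the same $\alpha$ and a strictly ordered subfamily of the $\eps_i$, so the induction hypothesis is available for it.

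The first, routine, step is the crude bound $|w^L_i(x,t)|\le C B^L_n(x)$ for each $i$. It follows by applying Lemma \ref{max} to the barrier functions $C B^L_n(x)\vec e\pm\vec w^L(x,t)$: the only point to verify is $(\vec L(B^L_n\vec e))_i = B^L_n(x)(\sum_j a_{ij}(x,t)-\alpha\,\eps_i/\eps_n)\ge 0$, which holds because $\eps_i\le\eps_n$ and \eqref{a2}, while on $\Gamma$ positivity holds for $C$ large enough since $\vec w^L$ vanishes on $\Gamma_B\cup\Gamma_R$ and $\|\vec w^L\|_{\Gamma_L}=\|\vec u-\vec v\|_{\Gamma_L}\le C$ by Lemmas \ref{lexact} and \ref{lsmooth1}. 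Differentiating $\vec L\vec w^L=\vec 0$ once and twice with respect to $t$ shows that $\partial_t^l\vec w^L$ satisfies an equation of the same type whose right-hand side is a combination of $\partial_t^k A$ and $\partial_t^k\vec w^L$ with $k<l$, hence is $O(B^L_n)$ by what precedes; repeating the barrier argument, the boundary data being controlled by the assumed smoothness of the data, gives $|\partial_t^l w^L_i|\le C B^L_n$ for $l=1,2$.

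The substance of the lemma is in the sharp space-derivative estimates. These cannot be obtained simply by differentiating the $i$-th scalar equation and inserting the lower-order bounds: that produces a factor $\eps_i^{-1}$ multiplying the contributions of the wider layers $B^L_q$, $q>i$, which is not absorbed by the right-hand side of the claimed inequality, because away from its own width-$\sqrt{\eps_i}$ layer $w^L_i$ is comparatively smooth, so that $\partial_t w^L_i+\sum_j a_{ij}w^L_j=\eps_i\partial_x^2 w^L_i$ is of order $\eps_i$, and this cancellation must be exploited. Accordingly, in the inductive step I would decompose $\vec w^L$ into a part which carries only the narrowest layer $B^L_1$ and a remainder whose layer structure is governed by an $(n-1)$-component subsystem to which the induction hypothesis applies, and then recombine the estimates for the two pieces. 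At each stage the resulting terms are regrouped using the transition points $x^{(s)}_{i,j}$ and the orderings \eqref{x3}--\eqref{x5} of Lemma \ref{layers}, taken with $s=\tfrac12$, $1$ and $\tfrac32$ for $l=1$, $2$ and $3$ respectively, so that on each of the subintervals into which these points split $[0,1]$ exactly one layer function dominates and every term is controlled by one of the admissible terms $B^L_q(x)\,\eps_q^{-l/2}$ on the right-hand side; this is precisely the bookkeeping for which Lemma \ref{layers} was proved for all $s\le 3/2$. The estimate for $l=2$ then follows from the one for $l=1$ and the second-order equation, that for $l=4$ from the one for $l=2$ by one more $x$-differentiation, and the estimates for $\vec w^R$ are obtained in the same way after the reflection $x\mapsto 1-x$, using \eqref{x2} and the analogous part of Lemma \ref{layers}.

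The main obstacle will be precisely this last step for $l=1$ and $l=3$: constructing the decomposition of $\vec w^L$ so that the narrow layers $B^L_q$ with $q<i$ do not appear in the estimates for $\partial_x w^L_i$ and $\partial_x^2 w^L_i$ but are correctly reinstated, with the powers $\eps_q^{-3/2}$ and $\eps_i^{-1}\eps_q^{-1}$, in those for $\partial_x^3 w^L_i$ and $\partial_x^4 w^L_i$, together with the careful tracking of which layer function dominates on each subinterval cut out by the points $x^{(s)}_{i,j}$, so that the passage from the induction hypothesis on the subsystem to the asserted bounds is made rigorous.
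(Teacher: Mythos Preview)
Your induction framework and the barrier argument for the $t$-derivatives are correct and match the paper. The inductive step for the $x$-derivatives, however, has the peeling direction reversed and omits the mechanism that actually makes the induction close.

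The paper does \emph{not} split off the narrowest layer $B^L_1$. It removes the \emph{last} equation and treats $w^L_n$ first: from $\eps_n\partial_x^2 w^L_n=\partial_t w^L_n+\sum_j a_{nj}w^L_j$ and the crude bound $|w^L_j|\le CB^L_n$ one gets $|\partial_x^2 w^L_n|\le C\eps_n^{-1}B^L_n$, then $|\partial_x w^L_n|\le C\eps_n^{-1/2}B^L_n$ by a mean-value argument on an interval of length $\sqrt{\eps_n}$, and the third and fourth derivatives by differentiation. The first $n-1$ components $\vec{\tilde w}^L=(w^L_1,\dots,w^L_{n-1})$ then satisfy an $(n-1)\times(n-1)$ system with forcing $g_i=-a_{in}w^L_n$, which carries only the \emph{widest} layer $B^L_n$ and hence has derivatives that are mild relative to $\eps_1,\dots,\eps_{n-1}$. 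One now performs a second decomposition, $\vec{\tilde w}^L=\vec q+\vec r$, into smooth and singular parts as in \eqref{smoothcomp}--\eqref{singularcomp}: the smooth part $\vec q$ inherits only $B^L_n$ from the forcing (barrier arguments as in Lemma~\ref{lsmooth1}), and the induction hypothesis is applied to the \emph{singular} part $\vec r$. Combining the bounds for $w^L_n$, $\vec q$ and $\vec r$ gives the stated estimates. This intermediate $\vec q+\vec r$ split is the step you are missing.

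Your direction would not work: if you peeled off the first component, the forcing in the remaining subsystem would be $-a_{i1}w^L_1$, and to run a smooth/singular split there you would need sharp bounds on $\partial_x^l w^L_1$, which are precisely what you are trying to prove; the crude bound $|\partial_x^2 w^L_1|\le C\eps_1^{-1}B^L_n$ obtained from the first equation alone is far too large once $B^L_1$ has decayed but $B^L_n$ has not. Finally, the transition points $x^{(s)}_{i,j}$ and Lemma~\ref{layers} play no role in the proof of this lemma; they enter only later, in the decompositions of Lemma~\ref{general} used for the truncation-error analysis. You are conflating two separate parts of the argument.
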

\begin{proof}To obtain the bound of $\vec{w}^L$, define the functions
${\psi_i}^\pm(x,t)\;=\;Ce^{\alpha t}B_n^L(x)\;\pm\;w_i^L(x,t)$, for each $i=1,\dots,n$. It is clear that
${\psi_i}^\pm(0,t)$, ${\psi_i}^\pm(x,0)$, ${\psi_i}^\pm(1,t)$ and $(\vec L\vec\psi^\pm)_i(x,t)$ are non-negative. By Lemma 1, ${\psi_i}^\pm(x,t)\geq0$. It follows that $|w_i^L|\leq Ce^{\alpha t}B_n^L(x)$ \\
or
\begin{equation}\label{e22}
|w_i^L|\leq CB_n^L(x).
\end{equation}
To obtain the bound for $\frac{\partial w_i^L}{\partial t}$, define
the two functions
${\theta_i}^\pm(x,t)\;=\;CB_n^L(x)\;\pm\;\frac{\partial
w_i^L}{\partial t}(x,t)$ for each $i=1,\dots,n$. Differentiating the
homogeneous equation satisfied by $w_i^L$, partially with respect to
$t$, and rearranging yields
\[\frac{\partial^2 w_i^L}{\partial t^2} - \eps_i \frac{\partial^3 w^L_i}{\partial x^2 \partial t} + \sum_{j=1}^{n} a_{ij}\frac{\partial w_j^L}{\partial t} = \frac{-\partial\sum_{j=1}^{n}a_{ij}}{\partial t}w_j^L,\]
and we get
\[|L\frac{\partial w_i^L}{\partial t}|\leq C{B_n}^L(x)\]
\[|\frac{\partial w_i^L}{\partial t}(0,t)|\leq |\frac{\partial {u_i}}{\partial t}(0,t)|+|\frac{\partial {v_i}}{\partial t}(0,t)|=|\frac{\partial {\phi}_{L,i}(t)}{\partial t}|\leq C\]
\[|\frac{\partial w_i^L}{\partial t}(1,t)|\leq |\frac{\partial {u_i}}{\partial t}(1,t)|+|\frac{\partial {v_i}}{\partial t}(1,t)|=|\frac{\partial {\phi}_{R,i}(t)}{\partial t}|\leq C\]
\[|\frac{\partial w_i^L}{\partial t}(x,0)|\leq |\frac{\partial {\phi}_{B,i}(x)}{\partial t}|=0.\]
By Lemma \ref{stab}, it follows that
 \begin{equation}\label{e23}
|\frac{\partial w_i^L}{\partial t}|\leq C{B_n}^L(x).
\end{equation}
Now the bound for $\frac{\partial^2 w^L_i}{\partial x \partial t}$ is obtained by using Lemma (\ref{lexact}) and Lemma (\ref{lsmooth1})
\[ |\frac{\partial^2 w^L_i}{\partial x \partial t}|\leq |\frac{\partial^2 u_i}{\partial x \partial t}|+|\frac{\partial^2 v_i}{\partial x \partial t}|\]
\[ |\frac{\partial^2 w^L_i}{\partial x \partial t}| \leq C {\eps_i}^{\frac{-1}{2}}({||\vec u||_\Gamma}+||\vec f||+|| \frac{\partial \vec f}{\partial t}||+||\frac{\partial^2 \vec f}{\partial t^2}||).\]
%and
%\[ |\frac{\partial^3 w^L_i}{\partial x^2 \partial t}|\leq |\frac{\partial^3 u_i}{\partial x^2 \partial t}|+|\frac{\partial^3 v_i}{\partial x^2 \partial t}|\]
Similarly,
\[ |\frac{\partial^3 w^L_i}{\partial x^2 \partial t}| \leq C \eps_i^{-1}({||\vec u||_\Gamma}+||\vec f||+|| \frac{\partial \vec f}{\partial t}||+||\frac{\partial^2 \vec f}{\partial t^2}||). \]
The bounds on $\frac{\partial^l w_i^L}{\partial x^l}, l=1,2,3,4$ and
$i=1,\dots,n$ are derived by the method of induction on $n$. It is
assumed that the bounds $\frac{\partial w_i^L}{\partial x} ,
\frac{\partial^2 w_i^L}{\partial x^2},\frac{\partial^3
w_i^L}{\partial x^3}$ and $\frac{\partial^4 w_i^L}{\partial x^4}$
hold for all systems up to $n-1$. Define
$\vec{\tilde{w}}^L=(w_1^L,\dots,w_{n-1}^L)$, then
$\vec{\tilde{w}}^L$
 satisfies the system  \[\frac{\partial \vec{\tilde{w}}^L}{\partial t}-\tilde{E}\frac{\partial^2 \vec{\tilde{w}}^L}{\partial x^2}+\tilde A \vec{\tilde{w}}^L = \vec g, \]
with
\[\vec{\tilde{w}}^L(0,t) = \vec{\tilde{u}}(0,t)-\vec{\tilde{u}}_0(0,t), \vec{\tilde{w}}^L(1,t) =\vec{\tilde{0}},\]

\[\vec{\tilde{w}}^L(x,0)=\vec{\tilde{u}}(x,0)-\vec{\tilde{u}}_0(x,0) = \vec{\tilde{\phi}}_B(x)-\vec{\tilde{\phi}}_B(x)=\vec{\tilde{0}}.\]
Here, $\tilde{E}$ and $\tilde{A}$ are the matrices obtained by
deleting the last row and column from $E,A$ respectively, the
components of $\vec g$ are $g_i = -a_{i,n}w_n^L$ for $1\leq i \leq
n-1$ and $\vec{\tilde{u}}_0$ is the solution of the reduced problem.
Now decompose $\vec{\tilde{w}}^L$ into smooth and singular
components to get $\vec{\tilde{w}}^L = \vec{q} +
\vec{r},\;\frac{\partial \vec{\tilde{w}}^L}{\partial x} =
\frac{\partial \vec q}{\partial x} + \frac{\partial \vec r}{\partial
x} $. By induction, the bounds on the derivatives of ${ \vec{\tilde
w}}^L$ hold. That is for $i=1,\dots,n-1$

\begin{equation} \label{e24}
\ds \left.
\begin{array}{lcl}
|\dfrac{\partial w_i^L}{\partial x}| &\leq& C \sum_{q=i}^{n-1} \eps_q^{\frac{-1}{2}}B_q^L(x) \\\\
|\dfrac{\partial^2 w_i^L}{\partial x^2}| &\leq& C \sum_{q=i}^{n-1} \eps_q^{-1}{B_q}^L(x) \\\\
|\dfrac{\partial^3 w_i^L}{\partial x^3}| &\leq& C \sum_{q=1}^{n-1} \eps_q^{\frac{-3}{2}}B_q^L(x) \\\\
|\eps_i \dfrac{\partial^4 w_i^L}{\partial x^4}| &\leq& C \sum_{q=1}^{n-1} \eps_q^{-1}B_q^L(x)
\end{array}
\right \}
\end {equation}
%The bounds when $i=n$ are established first.\\
%Consider the equation
%\[\frac{\partial w_n^L}{\partial t} - \eps_n \frac{\partial^2 w_n^L}{\partial x^2 } + \sum_{j=1}^{n} a_{nj}{ w_j^L}=0. \]
Rearranging the $n^{th}$ equation of the system satisfied by $w_n^L$
yields
\[\eps_n \frac{\partial^2 w_n^L}{\partial x^2 }=\frac{\partial w_n^L}{\partial t} +  \sum_{j=1}^{n} a_{nj}{w_j^L}.\]
Using (\ref{e22}) and (\ref{e23}) gives
\begin{equation} \label{e25}
 |\frac{\partial^2 w_n^L}{\partial x^2 }| \leq C \eps_n^{-1} B_n^L(x).
\end{equation}
Applying the mean value theorem to $w_n^L$ at some $y$, $a < y< a+\sqrt \eps_n$

\[ \frac{\partial w_n^L}{\partial x}(y,t)= \frac{w_n^L(a+\sqrt \eps_n,t)-w_n^L(a,t)}{\sqrt \eps_n} \]
%\[|\frac{\partial w_n^L}{\partial x}(y,t)| \leq \frac{1}{\sqrt \eps_n}(|w_n^L(a+\sqrt \eps_n,t)|+|w_n^L(a,t)|).\]
Using (\ref{e22}) gives
\[|\frac{\partial w_n^L}{\partial x}(y,t)|\leq \frac{C}{\sqrt \eps_n}(B_n^L(a+\sqrt \eps_n)+B_n^L(a)). \]
So
\begin{equation} \label{e26}
|\frac{\partial w_n^L}{\partial x}(y,t)|\leq \frac{C}{\sqrt \eps_n}B_n^L(a).
\end{equation}
Again
\begin{equation} \label{e27}
 \frac{\partial w_n^L}{\partial x}(x,t) =  \frac{\partial w_n^L}{\partial x}(y,t) + (y-x) \frac{\partial^2 w_n^L}{\partial x^2}(\eta,t),\;\;\; y<\eta<x.
\end{equation}
Using (\ref{e25}) and (\ref{e26}) in (\ref{e27}) yields
\[\begin{array}{lcl}
|\frac{\partial w_n^L}{\partial x}(x,t)| &\leq& C[\eps_n^{\frac{-1}{2}}B_n^L(a)+ \eps_n^{\frac{-1}{2}}B_n^L(\eta)] \\
 &\leq& C \eps_n^{\frac{-1}{2}}B_n^L(a)\\
 &=& C \eps_n^{\frac{-1}{2}}B_n^L(x)\frac{B_n^L(a)}{B_n^L(x)}\\
 &=& C \eps_n^{\frac{-1}{2}}B_n^L(x) e^{(x-a)\sqrt \alpha / \sqrt \eps_n}\\
 &=& C \eps_n^{\frac{-1}{2}}B_n^L(x) e^{\sqrt \eps_n \sqrt \alpha / \sqrt \eps_n}.\\
\end{array}\]
Therefore
\begin{equation} \label{e28}
|\frac{\partial w_n^L}{\partial x}(x,t)|\leq C \eps_n^{\frac{-1}{2}}B_n^L(x).\;\;\s\s\s
\end{equation}
Now, differentiating the equation satisfied by $w_n^L$ partially with respect to $x$, and rearranging, gives
\[\eps_n  \frac{\partial^3 w_n^L}{\partial x^3} = \frac{\partial^2 w_n^L}{\partial x \partial t} + \sum_{q=1}^{n-1}a_{nq}\frac{\partial w_q^L}{\partial x } + a_{nn}\frac{\partial w_n^L}{\partial x} + \sum_{q=1}^{n} \frac{\partial a_{nq}}{\partial x} w_q^L. \]
The bounds on $w_n^L$ and (\ref{e24}) then give
\[|\frac{\partial^3 w_n^L}{\partial x^3}| \leq C \sum_{q=1}^{n} {\eps_q}^{\frac{-3}{2}}B_q^L(x). \]
Similarly
\[|\eps_n \frac{\partial^4 w_n^L}{\partial x^4}| \leq C \sum_{q=1}^{n} \eps_q^{-1}B_q^L(x).\]

Using the bounds on $w_n^L, \frac{\partial w_n^L}{\partial x},\frac{\partial^2 w_n^L}{\partial x^2},\frac{\partial^3 w_n^L}{\partial x^3}$ and $\frac{\partial^4
w_n^L}{\partial x^4}$, it is seen that $\vec g,\;\frac{\partial \vec g}{\partial x},$ $\frac{\partial^2 \vec g}{\partial x^2},$ $\frac{\partial^3 \vec g}{\partial
x^3},$ $\frac{\partial^4 \vec g}{\partial x^4}$ are bounded by $ CB_n^L(x),$ $C\frac{B_n^L(x)}{\sqrt \eps_n},$ $C\frac{B_n^L(x)}{\eps_n},$ $\sum_{q=1}^{n}
\frac{B_q^L(x)}{\eps_q^{\frac{3}{2}}},$ $C \eps_n^{-1} \sum_{q=1}^{n} \frac{B_q^L(x)}{\eps_q} $ respectively.
%it is seen that \vec g is bounded by $CB_n^L(x), \frac{\partial \vec g}{\partial x}$ is bounded by $C\frac{B_n^L(x)}{\sqrt \eps_n}, \frac{\partial^2 \vec g}{\partial x^2}$ is bounded by $C\frac{B_n^L(x)}{\eps_n}$, $\frac{\partial^3 \vec g}{\partial x^3}$ is bounded by C $\sum_{q=1}^{n} \frac{B_q^L(x)}{\eps_q^{\frac{3}{2}}}$ and  $\frac{\partial^4 \vec g}{\partial x^4}$ is bounded by $C \eps_n^{-1} \sum_{q=1}^{n} \frac{B_q^L(x)}{\eps_q}$.
Introducing the functions ${\vec \psi}^\pm(x,t)\;=\;CB_n^L(x)\vec e\;\pm\;\vec q(x,t)$, it is easy to see that ${\vec \psi}^\pm(0,t)=C\vec e\;\pm\;\vec q(0,t) \geq
\vec0$, ${\vec \psi}^\pm(1,t)=CB_n^L(1)\vec e\;\pm\;\vec 0 \geq \vec 0$,  ${\vec \psi}^\pm(x,0)=CB_n^L(x)\vec e\;\pm\;\vec 0 \geq \vec 0$ and
\[(\vec L {\vec \psi}^\pm)_i(x,t)=C(-\eps_i \frac{\alpha}{\eps_n}+\sum_{j=1}^{n}a_{ij})B_n^L(x)\;\pm\;CB_n^L(x) \geq 0.\]
Applying Lemma 1, it follows that $||\vec q(x,t)|| \leq C B_n^L(x)$. Defining barrier functions ${\vec \theta}^\pm (x,t) = C \eps_n^{\frac{-1}{2}}B_n^L(x) \vec
e\;\pm\; \frac{\partial \vec q}{\partial x} $ and using Lemma 3 for the problem satisfied by $\vec q$, the bound required for $\frac{\partial \vec q}{\partial x} $
and $\frac{\partial^2 \vec q}{\partial x^2} $ is obtained. By induction, the following bounds for \vec r are obtained for $i=1,\dots,n-1,$
\[ |\frac{\partial r_i}{\partial x} |\leq [\frac{B_i^L(x)}{{\sqrt \eps_i}}+\dots+\frac{B_{n-1}^L(x)}{{\sqrt \eps_{n-1}}}   ],\]
\[ |\frac{\partial^2 r_i}{\partial x^2} |\leq C[\frac{B_i^L(x)}{{ \eps_i}}+\dots+\frac{B_{n-1}^L(x)}{{ \eps_{n-1}}}],\]
\[ |\frac{\partial^3 r_i}{\partial x^3} |\leq C[\frac{B_1^L(x)}{{ \eps_1^{\frac{3}{2}}}}+\dots+\frac{B_{n-1}^L(x)}{{ \eps_{n-1}^\frac{3}{2}}}],\]
\[ |\eps_i\frac{\partial^4 r_i}{\partial x^4} |\leq C[\frac{B_1^L(x)}{{ \eps_1}}+\dots+\frac{B_{n-1}^L(x)}{{ \eps_{n-1}}}].\]
Combining the bounds for the derivatives of $q_i$ and $r_i$ it follows that, for $i= 1,2,\dots,n$
\[ \begin{array}{rcl}
|\frac{\partial^l w^L_i}{\partial x^l}|&\leq& |\frac{\partial^l q_i}{\partial x^l}|+|\frac{\partial^l r_i}{\partial x^l }|\\\\
|\frac{\partial^l w^L_i}{\partial x^l}|&\leq& C \sum_{q=i}^{n} \frac{B_q^L(x)}{{ \eps_q^{\frac{l}{2}}}}\;\;\text{for}\;\; l= 1,2\\\\
|\frac{\partial^3 w^L_i}{\partial x^3}|&\leq& C \sum_{q=1}^{n} \frac{B_q^L(x)}{{ \eps_q^{\frac{3}{2}}}}\\\\
\text{and}\;\;\;\; |\eps_i\frac{\partial^4 w^L_i}{\partial x^4}|&\leq& C \sum_{q=1}^{n} \frac{B_q^L(x)}{\eps_q}. \end{array}\] Recalling the bounds on the
derivatives of $w_n^L$ completes the proof of the lemma for the system of $n$ equations.\\ A similar proof of the analogous results for the right boundary layer
functions holds.\eop\end{proof}

\section{The Shishkin mesh}
 A piecewise
uniform Shishkin mesh with $M \times N$ mesh-intervals is now
constructed. Let $\Omega^M_t=\{t_k \}_{k=1}^{M},\;\;\Omega^N_x=\{x_j
\}_{j=1}^{N-1},\;\;\overline{\Omega}^M_t=\{t_k
\}_{k=0}^{M},\;\;\overline{\Omega}^N_x=\{x_j
\}_{j=0}^{N},\;\;\Omega^{M,N}=\Omega^M_t \times \Omega^N_x,\;\;
 \overline{\Omega}^{M,N}=\overline{\Omega}^M_t \times \overline{\Omega}^N_x \;\;\mathrm{and}\;\;\Gamma^{M,N}=\Gamma \cap \overline{\Omega}^{M,N}.$
 The mesh $\overline{\Omega}^M_t$ is chosen to be a uniform mesh with $M$
mesh-intervals on $[0,T]$. The mesh $\overline{\Omega}^N_x$ is a
piecewise-uniform mesh on $[0,1]$ obtained by dividing $[0,1]$ into
$2n+1$ mesh-intervals as follows
\[[0,\sigma_1]\cup\dots\cup(\sigma_{n-1},\sigma_n]\cup(\sigma_n,1-\sigma_n]\cup(1-\sigma_n,1-\sigma_{n-1}]\cup\dots\cup(1-\sigma_1,1].\]
The $n$ parameters $\sigma_r$, which determine the points separating the uniform meshes, are defined by
\begin{equation}\label{tau1}\sigma_{n}=
\min\displaystyle\left\{\frac{1}{4},2\sqrt{\frac{\eps_n}{\alpha}}\ln N\right\}\end{equation} and for $\;r=1,\;\dots \; ,n-1$
\begin{equation}\label{tau2}\sigma_{r}=\min\displaystyle\left\{\frac{\sigma_{r+1}}{2},2\sqrt{\frac{\eps_r}{\alpha}}\ln
N\right\}.\end{equation} Clearly \[
0\;<\;\sigma_1\;<\;\dots\;<\;\sigma_n\;\le\;\frac{1}{4}, \qquad
\frac{3}{4}\leq 1-\sigma_n < \; \dots \;< 1-\sigma_1 <1.\] Then, on
the sub-interval $\;(\sigma_n,1-\sigma_n]\;$ a uniform mesh with
$\;\frac{N}{2}\;$ mesh-intervals is placed, on each of the
sub-intervals
$\;(\sigma_r,\sigma_{r+1}]\;\tx{and}\;(1-\sigma_{r+1},1-\sigma_r],\;\;r=1,\dots,n-1,\;$
a uniform mesh of $\;\frac{N}{2^{n-r+2}}\;$ mesh-intervals is placed
and on both of the sub-intervals $\;[0,\sigma_1]\;$ and
$\;(1-\sigma_1,1]\;$ a uniform mesh of $\;\frac{N}{2^{n+1}}\;$
mesh-intervals is placed. In practice it is convenient to take
\begin{equation}\label{meshpts2} N=2^{n+p+1} \end{equation} for some
natural number $p$. It follows that in the sub-interval
$[\sigma_{r-1},\sigma_{r}]$ there are $N/2^{n-r+3}=2^{r+p-2}$
mesh-intervals. This construction leads to a class of $2^n$
piecewise uniform Shishkin meshes $\Omega^{M,N}$. Note that these meshes are not the same as those
constructed in \cite{GLOR}\\
The following notation is introduced: $h_j=x_{j}-x_{j-1},\; J= \{x_j: D^+ h_j=h_{j+1}-h_j \neq 0\}$. Clearly, $J$ is the set of points at which the mesh-size
changes.  Let $R=\{r:\sigma_r \in J \}$. From the above construction it follows that $J$ is a subset of the set of transition points $\{\sigma_r \}_{r=1}^n
\cup\{1-\sigma_r\}_{r=1}^n$. It is not hard to see that for each point $x_j$ in the mesh-interval $(\sigma_{r-1} ,\sigma_r ]$,
\begin{equation}\label{geom7} h_j
=2^{n-r+3}N^{-1}(\sigma_r-\sigma_{r-1})\end{equation} and so the change in the mesh-size at the point $\sigma_r$ is
\begin{equation}\label{geom8} D^+ h_r=2^{n-r+3}(d_r
-d_{r-1}), \end{equation} where $d_r =\frac{\sigma_{r+1}}{2}-\sigma_r$ for
 $1 \leq r \leq n$, with the conventions $d_0 =0,\; \sigma_{n+1}=1/2.$ Notice that $d_r
\ge 0$, that $\Omega^{M,N}$ is a classical uniform mesh when $d_r = 0$ for all $ r=1 \; \dots \; n$ and, from \eqref{geom8},
 that \begin{equation}\label{geom5}
D^+h_r <0 \;\mathrm{if}\; d_r=0.\end{equation}
 \\
Furthermore
\begin{equation}\label{geom2}\sigma_r \leq C \sqrt{\eps_r} \ln N,
 \; \;\; 1 \leq r \leq n, \end{equation}
and, using (\ref{geom7}), (\ref{geom2}),
\begin{equation}\label{geom4}h_{r}+h_{r+1} \leq
CN^{-1}\ln N \left\{ \begin{array}{l}\;\; \sqrt{\eps_{r+1}}, \;\;\mathrm{if}\;\; D^+ h_r >0, \\
\;\;  \sqrt{\eps_{r}}, \;\;\mathrm{if}\;\; D^+ h_r <0.\end{array}\right .\end{equation}\\
Also
\begin{equation}\label{geom-1}
\sigma_r=2^{-(s-r+1)}\sigma_{s+1}\;\mathrm{when}\; d_r=\dots =d_s
=0, \; 1 \leq r \leq s \leq n.
\end{equation}
The geometrical results in the following lemma are used later.
\begin{lemma} \label{s1} Assume that $d_r>0$ and let $0<s \leq 2$.  Then the following inequalities hold
\begin{equation}\label{geom0}
B^L_r(\sigma_r)=N^{-2}.
\end{equation}
\begin{equation}\label{geom9} x^{(s)}_{r-1,r}\;\leq\;\sigma_r-h_r \;\mathrm{for} \;
1<r\leq n.\end{equation}
\begin{equation}\label{geom10}
\frac{B_{q}^{L}(\sigma_{r})}{\eps_q^s}\leq \frac{1}{\eps_r^s} \;\;
\mathrm{for} \;\; 1 \leq q \leq n, \;\;1 \leq r \leq n.
\end{equation}
\begin{equation}\label{geom3} B_q^L(\sigma_r-h_r)\leq
CB_q^L(\sigma_r)\;\;\mathrm{for}\;\; 1 \leq r \leq q \leq n.
\end{equation}
\end{lemma}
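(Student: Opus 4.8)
The plan is to prove the four inequalities in order, since each later one leans on the earlier ones (and on the estimates for the transition points $x^{(s)}_{i,j}$ from Lemma \ref{layers}). For \eqref{geom0}, recall that $d_r>0$ forces $\sigma_r$ to equal the ``layer'' branch of the minimum in \eqref{tau1}--\eqref{tau2}: indeed if $\sigma_r$ were equal to the other branch ($\frac14$ or $\sigma_{r+1}/2$), then $d_r=\sigma_{r+1}/2-\sigma_r$ would be $\le 0$, contradicting $d_r>0$ (for $r<n$) — and for $r=n$, $d_n=\frac12-\sigma_n$, so $\sigma_n=2\sqrt{\eps_n/\alpha}\ln N$ is again forced unless $\sigma_n=\frac14$, in which case one checks $d_n>0$ still gives the bound with ``$\le$''; but in fact $d_r>0$ is exactly the statement $\sigma_r=2\sqrt{\eps_r/\alpha}\ln N$. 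Substituting into the definition $B^L_r(x)=e^{-x\sqrt{\alpha/\eps_r}}$ gives $B^L_r(\sigma_r)=e^{-2\ln N}=N^{-2}$, which is \eqref{geom0}.

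For \eqref{geom9}, I would use \eqref{x5} of Lemma \ref{layers}, which gives $x^{(s)}_{r-1,r}<2s\sqrt{\eps_r/\alpha}$ for $0<s\le 3/2$ (and the argument extends to $0<s\le 2$ as claimed here, since the proof of \eqref{x5} only uses $\ln x<x-1$). Since $d_r>0$ we have $\sigma_r=2\sqrt{\eps_r/\alpha}\ln N$, so it suffices to show $2s\sqrt{\eps_r/\alpha}\le \sigma_r-h_r=2\sqrt{\eps_r/\alpha}\ln N-h_r$, i.e. $h_r\le 2(\ln N-s)\sqrt{\eps_r/\alpha}$. By \eqref{geom7} the mesh-size $h_r$ on $(\sigma_{r-1},\sigma_r]$ is $2^{n-r+3}N^{-1}(\sigma_r-\sigma_{r-1})\le 2^{n-r+3}N^{-1}\sigma_r = 2^{n-r+3}N^{-1}\cdot 2\sqrt{\eps_r/\alpha}\ln N$, which is $\le 2(\ln N-s)\sqrt{\eps_r/\alpha}$ once $N$ is large enough that $2^{n-r+3}N^{-1}\ln N\le \ln N-s$; this holds for all $N\ge N_0$ depending only on $n$ and $s$ (and for smaller $N$ one absorbs everything into $C$, or notes the mesh is then essentially uniform). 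This is the step I expect to be the main obstacle: pinning down the precise interplay between the geometric decay factor $2^{n-r+3}$, the $N^{-1}\ln N$ factor, and the requirement $h_r<\sigma_r-x^{(s)}_{r-1,r}$, and making sure the threshold on $N$ is uniform in all parameters.

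For \eqref{geom10}, the claim is $B^L_q(\sigma_r)/\eps_q^s\le \eps_r^{-s}$ for all $q,r$. I would split into cases. If $q\ge r$: by property (a), $B^L_q(\sigma_r)\le 1$... but that only gives $\eps_q^{-s}$, not $\eps_r^{-s}$, and $\eps_q\ge\eps_r$ makes $\eps_q^{-s}\le\eps_r^{-s}$ — good. If $q<r$: here $\eps_q<\eps_r$, so we need the exponential smallness of $B^L_q(\sigma_r)$ to compensate. Using $\sigma_r\ge\sigma_{q+1}$ (hence $\ge 2\sqrt{\eps_q/\alpha}\ln N$ whenever $d_q>0$) or more carefully using \eqref{geom2} $\sigma_r\ge c\sqrt{\eps_r}\ln N\ge c\sqrt{\eps_q}\ln N$, one gets $B^L_q(\sigma_r)=e^{-\sigma_r\sqrt{\alpha/\eps_q}}\le N^{-2}$ (or a suitable negative power), and then $B^L_q(\sigma_r)/\eps_q^s\le N^{-2}\eps_q^{-s}\le 1\le\eps_r^{-s}$ since $\eps_q^{-s}\le\eps_1^{-s}$ is dominated by $N^2$ — this requires the mild standing assumption that the parameters are not smaller than a fixed negative power of $N$, which is the usual convention in this literature and is implicit in \eqref{geom2}. (If that convention is not wanted, one instead uses that when $\sigma_q$ is on the ``$\frac{\sigma_{q+1}}2$'' branch the points stack as powers of two via \eqref{geom-1}, and feeds that into the monotonicity (a).)

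Finally, for \eqref{geom3}, with $1\le r\le q\le n$, write $B^L_q(\sigma_r-h_r)=B^L_q(\sigma_r)\,e^{h_r\sqrt{\alpha/\eps_q}}$, so it suffices to show $h_r\sqrt{\alpha/\eps_q}\le C$. Since $q\ge r$, $\sqrt{\eps_q}\ge\sqrt{\eps_r}$, so $h_r\sqrt{\alpha/\eps_q}\le h_r\sqrt{\alpha/\eps_r}$, and by \eqref{geom7} (together with $d_r>0$ forcing $\sigma_r=2\sqrt{\eps_r/\alpha}\ln N$) we have $h_r\le 2^{n-r+3}N^{-1}\sigma_r\le 2^{n+2}N^{-1}\cdot 2\sqrt{\eps_r/\alpha}\ln N$, whence $h_r\sqrt{\alpha/\eps_r}\le 2^{n+3}N^{-1}\ln N\le C$ uniformly. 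Since $e^{C}$ is a constant, this gives $B^L_q(\sigma_r-h_r)\le CB^L_q(\sigma_r)$, completing the proof. Throughout, I would emphasise that every threshold on $N$ and every constant $C$ depends only on $n$, $\alpha$, and $s$, never on the $\eps_i$ or on $M,N$, so the estimates are genuinely parameter-uniform.
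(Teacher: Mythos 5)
Your arguments for \eqref{geom0} and \eqref{geom3} are essentially the paper's, and your approach to \eqref{geom9} is also along the paper's lines (the paper uses $N=2^{n+p+1}$ to deduce $x^{(s)}_{r-1,r}<\sigma_r/2$ and $h_r<\sigma_r/2$ directly, rather than comparing against a threshold $N_0$, but both hinge on the same numerical inequality and both tacitly require $N$ large enough — so that part is fine).

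The genuine gap is in \eqref{geom10} for the case $q<r$. You attempt to argue $B^L_q(\sigma_r)\,\eps_q^{-s}\le N^{-2}\eps_q^{-s}\le \eps_r^{-s}$ by invoking a ``standing assumption that the parameters are not smaller than a fixed negative power of $N$.'' No such assumption is made in this paper, and making it would defeat the very thing being proved: the error estimates must be \emph{uniform} in all $\eps_i\in(0,1]$. Indeed, if $\eps_q$ is exponentially small compared with $N^{-1}$ while $\eps_r$ is order one, then $N^{-2}\eps_q^{-s}$ is arbitrarily large but $\eps_r^{-s}$ is bounded, so your chain of inequalities breaks down. Your parenthetical fallback via \eqref{geom-1} does not repair this, since it addresses only the geometry of the $\sigma$'s, not the quotient $\eps_q^{-s}$. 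The idea you are missing is that \eqref{geom10} is where the transition points $x^{(s)}_{q,r}$ earn their keep: for $q<r$, combine \eqref{geom9} with the ordering \eqref{x4} of Lemma \ref{layers} to get $x^{(s)}_{q,r}\le x^{(s)}_{r-1,r}\le \sigma_r-h_r<\sigma_r$; then, since $B^L_q$ is decreasing and $x^{(s)}_{q,r}$ is defined by the crossing identity \eqref{x1},
\begin{equation*}
\frac{B^L_q(\sigma_r)}{\eps_q^s}\;\le\;\frac{B^L_q(x^{(s)}_{q,r})}{\eps_q^s}
\;=\;\frac{B^L_r(x^{(s)}_{q,r})}{\eps_r^s}\;\le\;\frac{1}{\eps_r^s},
\end{equation*}
with no extra hypothesis on the sizes of the $\eps_i$. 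This is the one step in the lemma where the construction of the $x^{(s)}_{i,j}$ is essential, and your proposal bypasses it.
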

\begin{proof} The proof of \eqref{geom0} follows immediately from
the definition of $\sigma_r$ and the assumption that $d_r>0$.\\ To verify (\ref{geom9}) note that, by Lemma \ref{layers} and \eqref{meshpts2},
\[x^{(s)}_{r-1,r} < 2s\sqrt{\frac{\eps_r}{\alpha}} =
\frac{s\sigma_r}{\ln N} = \frac{s\sigma_r}{(n+p+1)\ln 2} \leq
\frac{\sigma_r}{2}.
\]
Also, by \eqref{meshpts2} and \eqref{geom7}, \[h_r
=2^{n-r+3}N^{-1}(\sigma_{r}-\sigma_{r-1})=2^{2-r-p}(\sigma_{r}-\sigma_{r-1})\leq
\frac{\sigma_{r}-\sigma_{r-1}}{2} < \frac{\sigma_r}{2}.\] It follows
that $x^{(s)}_{r-1,r}+h_r \leq \sigma_r$ as required.
\\
To verify (\ref{geom10}) note that if $q \ge r$ the result is
trivial. On the other hand, if $q<r$, by (\ref{geom9}) and Lemma
\ref{layers},
%\[x_{i,r}<\sqrt{\frac{\eps_r}{\alpha}}< 2\sqrt{\frac{\eps_r}{\alpha}}\ln N =\sigma_r \] so
\[\frac{B_{q}^{L}(\sigma_{r})}{\eps_q^s}\leq
\frac{B_{q}^{L}(x^{(s)}_{q,r})}{\eps_q^s}=
\frac{B_{r}^{L}(x^{(s)}_{q,r})}{\eps_r^s}\leq \frac{1}{\eps_r^s}.\]
Finally, to verify (\ref{geom3}) note, from \eqref{geom7}, that
\begin{equation*}
h_r=2^{n-r+3}N^{-1}(\sigma_{r}-\sigma_{r-1}) \leq
2^{n-r+3}N^{-1}\sigma_r=2^{n-r+4}\sqrt{\frac{\eps_r}{\alpha}}
N^{-1}\ln N.
\end{equation*}
But
\begin{equation*}
e^{2^{n-r+4} N^{-1}\ln N}=(N^{\frac{1}{N}})^{2^{n-r+4}} \leq C,
\end{equation*}
so
\begin{equation*}
\sqrt{\frac{\alpha}{\eps_q }}h_r \leq
\sqrt{\frac{\eps_r}{\eps_q}}2^{n-r+4} N^{-1}\ln N \leq 2^{n-r+4}
N^{-1}\ln N \leq C,
\end{equation*}  since $r\leq q$.
It follows that
\begin{equation*}
B^L _q (\sigma_r -h_r )=B^L _q
(\sigma_r)e^{\sqrt{\frac{\alpha}{\eps_q }}h_r } \leq CB^L _q
(\sigma_r)
\end{equation*} as required. \eop \end{proof}
\section{The discrete problem}
In this section a classical finite difference operator with an
appropriate Shishkin mesh is used to construct a numerical method
for (\ref{BVP}), which is shown later to be essentially second order
parameter-uniform. It is assumed henceforth that the problem data
satisfy whatever smoothness conditions are required.\\
The discrete initial-boundary value problem is now defined on any mesh by the finite difference method
\begin{equation}\label{discreteBVP}
D^-_t\vec{U}-E\delta^2_x\vec{U} +A\vec{U}=\vec{f}\;\; \mathrm{on}
\;\; \Omega^{M,N},\;\; \vec{U}=\vec{u} \;\; \mathrm{on} \;\;
\Gamma^{M,N}.
\end{equation}
This is used to compute numerical approximations to the exact
solution of (\ref{BVP}). Note that (\ref{discreteBVP}),
 can also be written in the operator form
\[\vec{L}^{M,N} \vec{U}\;=\;\vec{f} \;\; \mathrm{on}
\;\; \Omega^{M,N},\;\; \vec{U}=\vec{u} \;\; \mathrm{on} \;\;
\Gamma^{M,N},\] where \[\vec{L}^{M,N} \;=\; D^-_t-E\delta^2_x+A\]
and $D^-_t,\; \delta^2_x,\; D^+_x \; \tx{and} \; D^{-}_x$ are the
difference operators
\[D^-_t\vec{U}(x_j,t_k)\;=\;\dfrac{\vec{U}(x_j,t_k)-\vec{U}(x_{j},t_{k-1})}{t_k-t_{k-1}},\]
\[\delta^2_x\vec{U}(x_j,t_k)\;=\;\dfrac{D^+_x\vec{U}(x_j,t_k)-D^-_x\vec{U}(x_j,t_k)}{(x_{j+1}-x_{j-1})/2},\]
\[D^+_x\vec{U}(x_j,t_k)\;=\;\dfrac{\vec{U}(x_{j+1},t_k)-\vec{U}(x_j,t_k)}{x_{j+1}-x_j},\]
\[D^-_x\vec{U}(x_j,t_k)\;=\;\dfrac{\vec{U}(x_j,t_k)-\vec{U}(x_{j-1},t_k)}{x_j-x_{j-1}}.\]
%with $\;\oln{h}_j\;=\;\dfrac{h_j+h_{j+1}}{2},\;\;\;\;h_j\;=\;x_{j}-x_{j-1}.$\\
The following discrete results are analogous to those for the
continuous case.
\begin{lemma}\label{dmax} Let $A(x,t)$ satisfy (\ref{a1}) and (\ref{a2}).
Then, for any mesh function $\vec{\Psi}$, the inequalities $\vec
{\Psi}\;\ge\;\vec 0 \;\mathrm{on}\; \Gamma^{M,N}$ and $\vec{L}^{M,N}
\vec{\Psi}\;\ge\;\vec 0\;$ on $\Omega^{M,N}$ imply that $\;\vec
{\Psi}\ge \vec 0\;$ on $\overline{\Omega}^{M,N}.$
\end{lemma}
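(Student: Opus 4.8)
The plan is to argue by contradiction, exactly paralleling the proof of Lemma \ref{max} for the continuous operator, but with the discrete operators $D^-_t$ and $\delta^2_x$ playing the roles of $\partial/\partial t$ and $\partial^2/\partial x^2$. First I would choose an index $i^*$ and a mesh point $(x_{j^*},t_{k^*})\in\overline\Omega^{M,N}$ at which $\Psi_{i^*}(x_{j^*},t_{k^*})=\min_{i}\min_{\overline\Omega^{M,N}}\Psi_i$, and assume for contradiction that this minimum value is strictly negative. Since $\vec\Psi\ge\vec 0$ on $\Gamma^{M,N}$, the minimizing point cannot lie on $\Gamma^{M,N}$; hence $1\le j^*\le N-1$ and $1\le k^*\le M$, so that $x_{j^*-1},x_{j^*+1}\in\overline\Omega^N_x$, $t_{k^*-1}\in\overline\Omega^M_t$, and every difference quotient occurring in $\vec{L}^{M,N}$ is well defined at $(x_{j^*},t_{k^*})$.

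Next I would record the sign of each of the three terms of $(\vec{L}^{M,N}\vec\Psi(x_{j^*},t_{k^*}))_{i^*}$. Because $\Psi_{i^*}(x_{j^*},t_{k^*})\le\Psi_{i^*}(x_{j^*},t_{k^*-1})$ and $t_{k^*}-t_{k^*-1}>0$, one has $D^-_t\Psi_{i^*}(x_{j^*},t_{k^*})\le 0$. Because $\Psi_{i^*}(x_{j^*},t_{k^*})\le\Psi_{i^*}(x_{j^*\pm1},t_{k^*})$, one has $D^+_x\Psi_{i^*}\ge 0$ and $D^-_x\Psi_{i^*}\le 0$, and since the factor $2/(x_{j^*+1}-x_{j^*-1})$ is positive it follows that $\delta^2_x\Psi_{i^*}(x_{j^*},t_{k^*})\ge 0$, so $-\eps_{i^*}\delta^2_x\Psi_{i^*}(x_{j^*},t_{k^*})\le 0$. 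For the reaction term I would split $\sum_{j}a_{i^*,j}\Psi_j=a_{i^*,i^*}\Psi_{i^*}+\sum_{j\ne i^*}a_{i^*,j}\Psi_j$; by (\ref{a1}) the off-diagonal entries satisfy $a_{i^*,j}\le 0$ for $j\ne i^*$, and $\Psi_j(x_{j^*},t_{k^*})\ge\Psi_{i^*}(x_{j^*},t_{k^*})$ by the choice of the minimum, so $a_{i^*,j}\Psi_j\le a_{i^*,j}\Psi_{i^*}$ for each such $j$. Summing gives $\sum_{j}a_{i^*,j}\Psi_j\le\big(\sum_{j}a_{i^*,j}\big)\Psi_{i^*}$, and this is strictly negative by (\ref{a2}) together with $\Psi_{i^*}(x_{j^*},t_{k^*})<0$.

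Adding the three bounds yields $(\vec{L}^{M,N}\vec\Psi(x_{j^*},t_{k^*}))_{i^*}<0$, which contradicts the hypothesis $\vec{L}^{M,N}\vec\Psi\ge\vec 0$ on $\Omega^{M,N}$. Hence the minimum must be nonnegative, i.e. $\vec\Psi\ge\vec 0$ on $\overline\Omega^{M,N}$. There is no serious obstacle in this argument; the only points that require care are the sign of $\delta^2_x\Psi_{i^*}$ at a discrete spatial minimum (which holds on the arbitrary, possibly non-uniform, mesh precisely because the weights in $D^+_x$ and $D^-_x$ and the normalising factor are all positive) and the direction in which the inequality $\Psi_j\ge\Psi_{i^*}$ reverses when multiplied by the nonpositive entries $a_{i^*,j}$. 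As in the continuous case, the identical argument applies to the discrete operator associated with any principal submatrix $\tilde A$ of $A$, since the entries of $\tilde A$ inherit \emph{a fortiori} the inequalities (\ref{a1}) and (\ref{a2}).
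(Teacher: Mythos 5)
Your proof is correct and follows essentially the same contradiction argument as the paper: pick the global discrete minimum, observe that off the boundary each of the three terms of $(\vec{L}^{M,N}\vec\Psi)_{i^*}$ is nonpositive with the reaction term strictly negative (via (\ref{a1}), (\ref{a2})), contradicting $\vec{L}^{M,N}\vec\Psi\ge\vec 0$. In fact your version is slightly more careful than the paper's, which drops the $D^-_t$ term from its displayed equation (evidently a typo) and writes $\delta^2_x\Psi_{i^*}>0$ where $\ge 0$ is what actually follows; your treatment of the time-difference quotient and the componentwise reaction-term estimate is the cleaner presentation of the same argument.
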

\begin{proof} Let $i^*, j^*, k^*$ be such that
$\Psi_{i^*}(x_{j^{*}}, t_{k^{*}})=\min_i\min_{j,k}\Psi_i(x_j,t_k)$
and assume that the lemma is false. Then $\Psi_{i^*}(x_{j^{*}},
t_{k^{*}})<0$ . From the hypotheses we have $j^*\neq 0, \;N$ and
$\Psi_{i^*} (x_{j^*},t_{k^{*}})-\Psi_{i^*}(x_{j^*-1},t_{k^{*}})\leq
0, \; \Psi_{i^*}
(x_{j^*+1},t_{k^{*}})-\Psi_{i^*}(x_{j^*},t_{k^{*}})\geq 0,$ so
$\;\delta^2_x\Psi_{i*}(x_{j*},t_{k^{*}})\;>\;0.\;$ It follows that
\[\ds\left(\vec{L}^N\vec{\Psi}(x_{j*},t_{k^{*}})\right)_{i*}\;=
\;-\eps_{i*}\delta^2_x\Psi_{i*}(x_{j*},t_{k^{*}})+\ds{\sum_{q=1}^n}
a_{i*,\;q}(x_{j*},t_{k^{*}})\Psi_{q}(x_{j*},t_{k^{*}})\;<\;0,\]
which is a contradiction, as required. \eop \end{proof}

An immediate consequence of this is the following discrete stability
result.
\begin{lemma}\label{dstab} Let $A(x,t)$ satisfy (\ref{a1}) and (\ref{a2}).
Then, for any mesh function $\vec{\Psi} $ on $\Omega$,
\[\parallel\vec{\Psi}(x_j,t_k)\parallel\;\le\;\max\left\{||\vec{\Psi}||_{\Gamma^{M,N}}, \frac{1}{\alpha}||
\vec{L}^{M,N}\vec{\Psi}||\right\}. \]
\end{lemma}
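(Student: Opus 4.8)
The plan is to imitate exactly the proof of the continuous stability estimate in Lemma \ref{stab}, replacing $\vec L$ by $\vec L^{M,N}$ and Lemma \ref{max} by the discrete maximum principle Lemma \ref{dmax}. Set
\[
M \;=\; \max\ds\left\{\|\vec\Psi\|_{\Gamma^{M,N}},\;\dfrac{1}{\alpha}\|\vec L^{M,N}\vec\Psi\|\right\},
\]
and define the two mesh functions $\vec\Theta^{\pm}(x_j,t_k) = M\vec e \pm \vec\Psi(x_j,t_k)$, where $\vec e = (1,\dots,1)^T$. On $\Gamma^{M,N}$ we have $|\Psi_i| \le \|\vec\Psi\|_{\Gamma^{M,N}} \le M$, so $\Theta_i^{\pm} \ge 0$ there.

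Next I would verify $\vec L^{M,N}\vec\Theta^{\pm} \ge \vec 0$ on $\Omega^{M,N}$. Since the difference operators $D^-_t$ and $\delta^2_x$ annihilate the constant mesh function $\vec e$, one has $\vec L^{M,N}(M\vec e) = MA\vec e$, whose $i$-th component is $M\sum_{j=1}^n a_{ij}(x_j,t_k) \ge \alpha M \ge \|\vec L^{M,N}\vec\Psi\|$ by assumption (\ref{a2}) and the definition of $M$. Hence
\[
(\vec L^{M,N}\vec\Theta^{\pm})_i \;=\; M\sum_{j=1}^n a_{ij}(x_j,t_k) \;\pm\; (\vec L^{M,N}\vec\Psi)_i \;\ge\; \|\vec L^{M,N}\vec\Psi\| \;\pm\; (\vec L^{M,N}\vec\Psi)_i \;\ge\; 0 .
\]
By Lemma \ref{dmax}, $\vec\Theta^{\pm} \ge \vec 0$ on $\overline\Omega^{M,N}$, which is precisely $|\Psi_i(x_j,t_k)| \le M$ for all $i$ and all mesh points, giving the claim.

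There is no real obstacle here: the only point that requires (\ref{a1})--(\ref{a2}) rather than being purely formal is the off-diagonal sign condition in (\ref{a1}), which is what makes Lemma \ref{dmax} applicable, and the row-sum bound (\ref{a2}), used above to dominate $\|\vec L^{M,N}\vec\Psi\|$. Everything else is identical in form to the proof of Lemma \ref{stab}. One should simply note that, as in the continuous case, the same argument applies verbatim to any principal sub-matrix $\tilde A$ and its discrete operator $\vec{\tilde L}^{M,N}$, since the entries of $\tilde A$ inherit (\ref{a1}) and (\ref{a2}) \emph{a fortiori}; this remark is needed when the lemma is later applied to sub-systems arising in the truncation-error analysis.
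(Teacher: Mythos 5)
Your proof is correct and follows essentially the same route as the paper's: define the barrier mesh functions $\vec\Theta^\pm = M\vec e \pm \vec\Psi$ with $M$ the indicated maximum, verify nonnegativity on $\Gamma^{M,N}$ and that $\vec L^{M,N}\vec\Theta^\pm \ge \vec 0$ on $\Omega^{M,N}$ using \eqref{a2}, then invoke Lemma \ref{dmax}. You have merely spelled out the computation that the paper summarises as ``Using the properties of $A$ it is not hard to verify,'' so there is no substantive difference.
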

\begin{proof} Define the two functions
\[\vec{\Theta}^{\pm}(x_j,t_k)=\max\{||\vec{\Psi}||_{\Gamma^{M,N}},
\frac{1}{\alpha}||\vec{L^{M,N}}\vec{\Psi}||\}\vec{e}\pm
\vec{\Psi}(x_j,t_k)\]where $\vec{e}=(1,\;\dots \;,1)$ is the unit
vector. Using the properties of $A$ it is not hard to verify that
 $\vec{\Theta}^{\pm}\geq \vec{0}$ on $\Gamma^{M,N}$ and
$\vec{L}^{M,N}\vec{\Theta}^{\pm}\geq \vec{0}$ on $\Omega^{M,N}$. It follows from Lemma \ref{dmax} that $\vec{\Theta}^{\pm}\geq \vec{0}$ on
$\overline{\Omega}^{M,N}$.\eop
\end{proof}
The following comparison result will be used in the proof of the
error estimate.
\begin{lemma}\label{comparison}(Comparison Principle) Assume that,
for each $i=1,\; \dots \;,n$,
the mesh functions $\vec{\Phi}$ and  $\vec{Z}$ satisfy
\[|Z_i| \leq \Phi_i,\;\; \mathrm{on}\;\;\Gamma^{M,N}\;\; \mathrm{and}\;\;
|(\vec{L}^{M,N}\vec{Z})_i| \leq (\vec{L}^{M,N} \vec{\Phi})_i\;\; \mathrm{on}\;\;\Omega^{M,N}.\] Then, for each
 $i=1,\; \dots \;,n$,
\[|Z_i| \leq \Phi_i.\]
\end{lemma}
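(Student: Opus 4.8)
The plan is to apply the discrete maximum principle (Lemma \ref{dmax}) to the two mesh functions $\vec{\Phi} \pm \vec{Z}$, componentwise. First I would fix $i$ and consider the mesh function $\Phi_i + Z_i$. On the boundary $\Gamma^{M,N}$ the hypothesis $|Z_i| \le \Phi_i$ gives $\Phi_i + Z_i \ge 0$; similarly on $\Omega^{M,N}$ the hypothesis $|(\vec{L}^{M,N}\vec{Z})_i| \le (\vec{L}^{M,N}\vec{\Phi})_i$ gives $(\vec{L}^{M,N}(\vec{\Phi}+\vec{Z}))_i = (\vec{L}^{M,N}\vec{\Phi})_i + (\vec{L}^{M,N}\vec{Z})_i \ge 0$. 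The same two hypotheses applied with the opposite sign give $\Phi_i - Z_i \ge 0$ on $\Gamma^{M,N}$ and $(\vec{L}^{M,N}(\vec{\Phi}-\vec{Z}))_i \ge 0$ on $\Omega^{M,N}$.

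The subtlety is that Lemma \ref{dmax} is stated for a vector mesh function all of whose components satisfy the two inequalities simultaneously, not for a single component. So the correct move is to observe that the \emph{vector} mesh functions $\vec{\Phi} + \vec{Z}$ and $\vec{\Phi} - \vec{Z}$ each satisfy, in every component $i$, both $\vec{\Phi} \pm \vec{Z} \ge \vec 0$ on $\Gamma^{M,N}$ and $\vec{L}^{M,N}(\vec{\Phi} \pm \vec{Z}) \ge \vec 0$ on $\Omega^{M,N}$ — which is exactly what the preceding paragraph establishes for each $i$. Then Lemma \ref{dmax} applies directly to each of $\vec{\Phi} + \vec{Z}$ and $\vec{\Phi} - \vec{Z}$, yielding $\vec{\Phi} + \vec{Z} \ge \vec 0$ and $\vec{\Phi} - \vec{Z} \ge \vec 0$ on $\overline{\Omega}^{M,N}$. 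Combining these two inequalities componentwise gives $-\Phi_i \le Z_i \le \Phi_i$, i.e. $|Z_i| \le \Phi_i$ on $\overline{\Omega}^{M,N}$, for each $i$, which is the assertion.

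There is no real obstacle here; the proof is a routine two-sided barrier argument. The only point requiring a little care is the linearity of $\vec{L}^{M,N}$ used to split $(\vec{L}^{M,N}(\vec{\Phi}\pm\vec{Z}))_i$ into $(\vec{L}^{M,N}\vec{\Phi})_i \pm (\vec{L}^{M,N}\vec{Z})_i$, which is immediate since $D_t^-$, $\delta_x^2$ and multiplication by $A$ are all linear. One should also note that the statement of the lemma as displayed omits "on $\overline{\Omega}^{M,N}$" in the conclusion, but this is clearly the intended domain and is what the maximum principle delivers.

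\begin{proof}
Fix $i \in \{1,\dots,n\}$. From $|Z_i| \le \Phi_i$ on $\Gamma^{M,N}$ we have $(\vec\Phi \pm \vec Z)_i = \Phi_i \pm Z_i \ge 0$ on $\Gamma^{M,N}$, and since $i$ was arbitrary, $\vec\Phi \pm \vec Z \ge \vec 0$ on $\Gamma^{M,N}$. Similarly, using the linearity of $\vec L^{M,N}$ and the hypothesis $|(\vec L^{M,N}\vec Z)_i| \le (\vec L^{M,N}\vec\Phi)_i$ on $\Omega^{M,N}$,
\[(\vec L^{M,N}(\vec\Phi \pm \vec Z))_i = (\vec L^{M,N}\vec\Phi)_i \pm (\vec L^{M,N}\vec Z)_i \ge 0 \;\;\mathrm{on}\;\;\Omega^{M,N},\]
so $\vec L^{M,N}(\vec\Phi \pm \vec Z) \ge \vec 0$ on $\Omega^{M,N}$. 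Applying Lemma \ref{dmax} separately to the mesh functions $\vec\Phi + \vec Z$ and $\vec\Phi - \vec Z$ yields $\vec\Phi + \vec Z \ge \vec 0$ and $\vec\Phi - \vec Z \ge \vec 0$ on $\overline\Omega^{M,N}$. Hence, for each $i$, $-\Phi_i \le Z_i \le \Phi_i$, that is $|Z_i| \le \Phi_i$ on $\overline\Omega^{M,N}$. \eop
\end{proof}
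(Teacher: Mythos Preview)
Your proof is correct and follows exactly the same two-sided barrier argument as the paper: define $\vec{\Psi}^{\pm}=\vec{\Phi}\pm\vec{Z}$, verify the hypotheses of Lemma~\ref{dmax} componentwise on $\Gamma^{M,N}$ and $\Omega^{M,N}$, and conclude. Your version is in fact more carefully written than the paper's own (somewhat garbled) proof, since you make the use of linearity and the passage from componentwise to vector inequalities explicit.
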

\begin{proof} Define the two mesh functions $\vec{\Psi}^{\pm}$ by
\[\vec{\Psi}^{\pm}=\vec{\Phi} \pm \vec{Z}.\] Then, for each $i=1,\; \dots \;,n$,
satisfies
 \[(\Psi^{\pm})_i\ge 0, \;\; \mathrm{on}\;\;\Gamma^{M,N}\;\; \mathrm{and}\;\;
|(\vec{L}^{M,N}\vec{Z})_i| \leq (\vec{L}^{M,N} \vec{\Phi})_i\;\; \mathrm{on}\;\;\Omega^{M,N}.\] The result follows
 from an application of Lemma \ref{dmax}.
\eop \end{proof}
\section{The local truncation error}
From Lemma \ref{dstab}, it is seen that in order to bound the error
$||\vec{U}-\vec{u}||$ it suffices to bound
$\vec{L}^{M,N}(\vec{U}-\vec{u})$. But this expression satisfies
\begin{equation*}\begin{array}{l}
\vec{L}^{M,N}(\vec{U}-\vec{u})=\vec{L}^{M,N}(\vec{U})-\vec{L}^{M,N}(\vec{u})=\\
\vec{f}-\vec{L}^{M,N}(\vec{u})
=\vec{L}(\vec{u})-\vec{L}^{M,N}(\vec{u})
=(\vec{L}-\vec{L}^{M,N})\vec{u}.\end{array}\end{equation*} It
follows that
\begin{equation*}
\vec{L}^{M,N}(\vec{U}-\vec{u})=(\frac{\partial}{\partial
t}-D^-_t)\vec{u}-E(\frac{\partial^2}{\partial
x^2}-\delta^2_x)\vec{u}.
\end{equation*}
Let $\vec{V}, \vec{W}^L, \vec{W}^R$ be the discrete analogues of
$\vec{v}, \vec{w}^L, \vec{w}^R$ respectively. Then, similarly,
\begin{equation*}\vec{L}^{M,N}(\vec{V}-\vec{v})=(\frac{\partial}{\partial
t}-D^-_t)\vec{v}-E(\frac{\partial^2}{\partial x^2}-\delta^2_x)\vec{v},\end{equation*}
\begin{equation*}\vec{L}^{M,N}(\vec{W}^L-\vec{w}^L)=(\frac{\partial}{\partial
t}-D^-_t)\vec{w}^L-E(\frac{\partial^2}{\partial
x^2}-\delta^2_x)\vec{w}^L,\end{equation*}
\begin{equation*}\vec{L}^{M,N}(\vec{W}^R-\vec{w}^R)=(\frac{\partial}{\partial
t}-D^-_t)\vec{w}^R-E(\frac{\partial^2}{\partial
x^2}-\delta^2_x)\vec{w}^R,\end{equation*} and so, for each
$i=1,\;\dots \;,n$,
\begin{equation}\label{LTEV}
|(\vec{L}^{M,N}(\vec{V}-\vec{v}))_i|\leq |(\frac{\partial}{\partial t}-D^-_t)v_i|+
|\eps_i(\frac{\partial^2}{\partial x^2}-\delta^2_x)v_i|,
\end{equation}
\begin{equation}\label{LTEWL}
|(\vec{L}^{M,N}(\vec{W^L}-\vec{w^L}))_i|\leq
|(\frac{\partial}{\partial
t}-D^-_t)w^L_i|+|\eps_i(\frac{\partial^2}{\partial
x^2}-\delta^2_x)w^L_i|,
\end{equation}
\begin{equation}\label{LTEWR}
|(\vec{L}^{M,N}(\vec{W^R}-\vec{w^R}))_i|\leq
|(\frac{\partial}{\partial
t}-D^-_t)w^R_i|+|\eps_i(\frac{\partial^2}{\partial
x^2}-\delta^2_x)w^R_i|.
\end{equation}
%By the triangle inequality,
%\begin{equation}\label{triangleinequality}
%\parallel \vec L^{M,N}(\vec{U}-\vec{u})\parallel\;\leq\;\parallel
%\vec{L}^{M,N}(\vec{V}-\vec{v})\parallel+\parallel
%\vec{L}^{M,N}(\vec{W}-\vec{w})\parallel.
%\end{equation}
Thus, the smooth and singular components of the local truncation
error can be treated separately. Note that, for any smooth
function $ \psi $, the following
distinct estimates of the local truncation error hold:\\
for each$(x_j,t_k)\in \Omega^{L,M}$
\begin{equation}\label{lte0}
|(\frac{\partial}{\partial t}-D^-_t)\psi(x_j,t_k)|\;\le\;
C(t_k-t_{k-1})\max_{s\;\in\;[t_{k-1},t_k]}|\frac{\partial^2\psi}{\partial
t^2}(x_j,s)|,
\end{equation}
\begin{equation}\label{lte1}
|(\frac{\partial^2}{\partial x^2}-\delta^2_x)\psi(x_j,t_k)|\;\le\;
C\max_{s\;\in\;I_j}|\frac{\partial^2\psi}{\partial x^2}(s,t_k)|,
\end{equation}
and
\begin{equation}\label{lte2}
|(\frac{\partial^2}{\partial
x^2}-\delta^2_x)\psi(x_j,t_k)|\;\le\;C(x_{j+1}-x_{j-1}) \max_{s\in
I_j}|\frac{\partial^3\psi}{\partial x^3}(s,t_k)|.
\end{equation}
Assuming, furthermore, that $x_j \notin J$, then
\begin{equation}\label{lte3}
|(\frac{\partial^2}{\partial
x^2}-\delta^2_x)\psi(x_j,t_k)|\;\le\;C(x_{j+1}-x_{j-1})^2
\max_{s\in I_j}|\frac{\partial^4\psi}{\partial x^4}(s,t_k)|.
\end{equation}
Here $I_j=[x_{j-1}, x_{j+1}]$.
\section{Error estimate}
The proof of the error estimate is broken into two parts. In the
first a theorem concerning the smooth part of the error is proved.
Then the singular part of the error is considered. A barrier
function is now constructed, which is used in both parts of the
proof.\\
For each $r \in R$, introduce a piecewise linear polynomial
$\theta_r$ on $\overline{\Omega}$, defined by
\begin{equation*} \theta_r(x)=
\left\{ \begin{array}{l}\;\; \dfrac{x}{\sigma_r}, \;\; 0 \leq x \leq \sigma_r. \\
\;\; 1, \;\; \sigma_r < x < 1-\sigma_r.
\\ \;\; \dfrac{1-x}{\sigma_r}, \;\; 1-\sigma_r \leq x \leq 1.  \end{array}\right .\end{equation*}\\
It is not hard to verify that
\begin{equation}\label{theta} L^{M,N}(\theta_r(x_j)\vec{e})_i \ge
\left\{ \begin{array}{l}\;\;
\alpha \theta_{r}(x_j), \;\;  \mathrm{if} \;\;x_j \notin J \\
\;\; \alpha+\dfrac{2\eps_i}{ \sigma_r
(h_r+h_{r+1})}, \;\;  \mathrm{if} \;\;x_j=\sigma_r \in J. \end{array}\right .\end{equation}\\
On the Shishkin mesh $\Omega^{M,N}$ define the barrier function
$\vec{\Phi}$ by
\begin{equation}\label{barrier}\vec{\Phi}(x_j,t_k)=C[M^{-1}+N^{-2}+
N^{-2}(\ln N)^3\ds\sum_{r\in
R}\theta_r (x_j)]\vec{e},\end{equation} where $C$ is any sufficiently large constant.\\
Then, on $\Omega^{M,N}$,  $\vec{\Phi}$ satisfies
\begin{equation}\label{barrierbound1}0 \leq \Phi_{i}(x_j,t_k) \leq
C(M^{-1}+N^{-2}(\ln N)^{3}),\;\; 1 \leq i \leq n.\end{equation}
Also, for $x_j \notin J$,
\begin{equation}\label{barrierbound3}
(L^{M,N}\vec{\Phi})_i(x_j,t_k) \ge C(M^{-1}+N^{-2}(\ln
N)^{3})\end{equation} and, for $x_j \in J$, using
(\ref{geom4}),(\ref{theta}),
\begin{equation}\label{barrierbound4}
(L^{M,N}\vec{\Phi}(x_j,t_k))_i \ge
\left\{\begin{array}{l}C(M^{-1}+N^{-2}+
\frac{\eps_i}{\sqrt{\eps_r \eps_{r+1}}} N^{-1} \ln N),\;\;\mathrm{if}\;\;D^+ h_r>0, \\
C(M^{-1}+N^{-2}+\frac{\eps_i}{\eps_r} N^{-1} \ln N),\;\;
\mathrm{if}\;\;D^+ h_r<0.\end{array} \right .\end{equation} The
following theorem gives the required error estimate for the smooth
component.
\begin{theorem}\label{smootherrorthm} Let $A(x,t)$ satisfy (\ref{a1}) and
(\ref{a2}). Let $\vec v$ denote the smooth component of the exact
solution from (\ref{BVP}) and $\vec V$ the smooth component of the
 discrete solution from (\ref{discreteBVP}).  Then
\begin{equation}\;\; ||\vec{V}-\vec{v}|| \leq C(M^{-1}+N^{-2}(\ln N)^3). \end{equation}
\end{theorem}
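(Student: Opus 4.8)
The plan is to reduce everything, via the discrete stability estimate (Lemma~\ref{dstab}) and the comparison principle (Lemma~\ref{comparison}), to a pointwise comparison of the local truncation error of $\vec v$ with the barrier function $\vec\Phi$ of \eqref{barrier}. Set $\vec Z=\vec V-\vec v$. On $\Gamma^{M,N}$ both $\vec v$ and $\vec V$ carry the data of $\vec u_0$, so $\vec Z=\vec 0\le\vec\Phi$ there; hence, by Lemma~\ref{comparison}, it suffices to show that $|(\vec L^{M,N}\vec Z)_i(x_j,t_k)|\le(\vec L^{M,N}\vec\Phi)_i(x_j,t_k)$ on $\Omega^{M,N}$ for each $i$, after which \eqref{barrierbound1} gives the stated bound. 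By \eqref{LTEV} the truncation error splits into a temporal part $|(\frac{\partial}{\partial t}-D^-_t)v_i|$ and a spatial part $|\eps_i(\frac{\partial^2}{\partial x^2}-\delta^2_x)v_i|$, which I treat separately.

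For the temporal part I would use \eqref{lte0} together with $|\frac{\partial^2 v_i}{\partial t^2}|\le C$ from Lemma~\ref{lsmooth1}; since $\overline\Omega^M_t$ is uniform with $M$ subintervals, $t_k-t_{k-1}=T/M$ and hence $|(\frac{\partial}{\partial t}-D^-_t)v_i|\le CM^{-1}$, which is absorbed by the $M^{-1}$ contribution present in $(\vec L^{M,N}\vec\Phi)_i$ at every mesh point, cf. \eqref{barrierbound3} and \eqref{barrierbound4}.

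For the spatial part I would distinguish the two cases governing the consistency order of $\delta^2_x$. If $x_j\notin J$, then \eqref{lte3} applies; using $\eps_i|\frac{\partial^4 v_i}{\partial x^4}|\le C(\eps_i+1)\le C$ from Lemma~\ref{lsmooth1} and the fact that every step of the Shishkin mesh satisfies $x_{j+1}-x_{j-1}\le CN^{-1}\ln N$, one obtains $|\eps_i(\frac{\partial^2}{\partial x^2}-\delta^2_x)v_i|\le CN^{-2}(\ln N)^2$, dominated by the $N^{-2}(\ln N)^3$ term in \eqref{barrierbound3}. If $x_j=\sigma_r\in J$, the scheme is only first order and \eqref{lte2} must be used; here I would invoke the sharp bound $|\frac{\partial^3 v_i}{\partial x^3}(x,t)|\le C\big(1+\sum_{q=i}^n B_q(x)/\sqrt{\eps_q}\big)$ of Lemma~\ref{lsmooth2}, the mesh-width estimate \eqref{geom4} in its two sub-cases, and the geometric facts \eqref{geom10} and \eqref{geom3} of Lemma~\ref{s1} (plus, for the ``wrong-side'' indices $q<r$, the observation that $B^L_q(\sigma_r-h_r)$ is then so small that $B^L_q(\sigma_r-h_r)/\sqrt{\eps_q}\le C\eps_r^{-1/2}$ anyway) to conclude that $B_q(s)/\sqrt{\eps_q}\le C\eps_r^{-1/2}$ for all $q$ and all $s\in I_j$. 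This gives $\eps_i|\frac{\partial^3 v_i}{\partial x^3}|\le C\eps_i\eps_r^{-1/2}$ on $I_j$, so \eqref{lte2} and \eqref{geom4} yield $|\eps_i(\frac{\partial^2}{\partial x^2}-\delta^2_x)v_i|\le C\eps_i\sqrt{\eps_{r+1}}\,\eps_r^{-1/2}N^{-1}\ln N$ if $D^+h_r>0$ and $\le C\eps_i N^{-1}\ln N$ if $D^+h_r<0$; since $\eps_{r+1}\le 1$ and $\eps_r\le 1$, both are dominated by the corresponding branches of \eqref{barrierbound4}.

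Collecting the three estimates gives $|(\vec L^{M,N}\vec Z)_i|\le(\vec L^{M,N}\vec\Phi)_i$ on $\Omega^{M,N}$, and Lemma~\ref{comparison} together with \eqref{barrierbound1} yields $\|\vec V-\vec v\|\le C(M^{-1}+N^{-2}(\ln N)^3)$. I expect the main obstacle to be the transition-point estimate: one must control the layer functions $B_q$ near $\sigma_r$ uniformly in all the perturbation parameters --- in particular for $q<r$, where \eqref{geom3} does not directly apply --- and then verify that the single lost power of $N^{-1}$ in consistency at $\sigma_r$ is precisely compensated by the $\eps_i/(\sigma_r(h_r+h_{r+1}))$ growth produced when $\vec L^{M,N}$ is applied to the cut-off functions $\theta_r$ in $\vec\Phi$, in both mesh-transition sub-cases.
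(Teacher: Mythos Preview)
Your overall strategy coincides with the paper's: compare the local truncation error of $\vec v$ against $(\vec L^{M,N}\vec\Phi)_i$ via Lemma~\ref{comparison}, treat time by \eqref{lte0} with Lemma~\ref{lsmooth1}, treat interior points $x_j\notin J$ by \eqref{lte3} with Lemma~\ref{lsmooth1}, and treat transition points by \eqref{lte2} with Lemma~\ref{lsmooth2}. For $x_j\notin J$ and for $x_j=\sigma_r\in J$ with $D^+h_r>0$ your argument is essentially the paper's.

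The gap is at $x_j=\sigma_r\in J$ with $D^+h_r<0$. Here you invoke \eqref{geom10} and \eqref{geom3} of Lemma~\ref{s1}, but that lemma assumes $d_r>0$. When $D^+h_r<0$ one can have $d_r=0$ (indeed $d_r=0$ forces $D^+h_r<0$ by \eqref{geom5}), and then neither \eqref{geom0}, \eqref{geom9}, \eqref{geom10} nor \eqref{geom3} is available at $\sigma_r$. In this regime your key intermediate claim $B^L_q(\sigma_r-h_r)/\sqrt{\eps_q}\le C\eps_r^{-1/2}$ for $q<r$ is not uniformly true: since $d_r=0$ gives no lower bound on $\sigma_r$ in terms of $\sqrt{\eps_r}$, the ratio $\eps_r/\eps_{r-1}$ is unconstrained and the best one has is $B^L_{r-1}(\sigma_r-h_r)/\sqrt{\eps_{r-1}}\le N^{-2}/\sqrt{\eps_{r-1}}$, obtained by shifting to index $r-1$ (where $d_{r-1}>0$ because otherwise $\sigma_r\notin J$) and applying Lemma~\ref{s1} there.

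The paper resolves this by an explicit $d_r>0$ versus $d_r=0$ case split. For $d_r=0$ it uses \eqref{geom9} and \eqref{geom0} at level $r-1$ to bound $\sum_{q=i}^{r-2}B_q(\sigma_r-h_r)/\sqrt{\eps_q}\le CN^{-2}/\sqrt{\eps_{r-1}}$, which after multiplying by $\eps_i(h_r+h_{r+1})\le C\eps_i\sqrt{\eps_r}N^{-1}\ln N$ and using $\eps_i\le\eps_{r-1}$ produces a term of size $C\sqrt{\eps_{r-1}\eps_r}\,N^{-3}\ln N\le CN^{-2}$; this is then matched against the $N^{-2}$ summand of $(\vec L^{M,N}\vec\Phi)_i$ in \eqref{barrierbound4}, \emph{not} against the $\frac{\eps_i}{\eps_r}N^{-1}\ln N$ summand. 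So you need both the shift $r\mapsto r-1$ and the willingness to use a different piece of the barrier for that one awkward term; your stated bound $C\eps_iN^{-1}\ln N$ in the $D^+h_r<0$ branch does not by itself capture it.
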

\begin{proof}
It suffices to show that
\begin{equation}\label{ratiobound} \frac{|(L^{M,N}(\vec{V}-\vec{v}))_i (x_j,t_k)|}{|(L^{M,N}\vec{\Phi})_i
(x_j,t_k)|} \leq C,
\end{equation}
for each $i=1,\;\dots \;,n,$ because an application of the
Comparison
Principle then yields the required result.\\ For each mesh point $x_j$ either $x_j \notin J$ or $x_j \in J$.\\
Suppose first that $x_j \notin J$. Then, from \eqref{barrierbound3},
\begin{equation}\label{smoothdenom}
(L^{M,N}\vec{\Phi}(x_j,t_k))_i \ge C(M^{-1}+N^{-2})\end{equation}
and from \eqref{lte0}, \eqref{lte3} and Lemma \ref{lsmooth1}
\begin{equation}\label{smoothnumer}
\begin{array}{lcl}|(L^{M,N}(\vec{V}-\vec{v}))_i(x_j,t_k)|&\leq &C(t_k-t_{k-1}+(x_{j+1}-x_{j-1})^2)\\  &\leq &C(M^{-1}+(h_j+h_{j+1})^2)\\
&\leq & C(M^{-1}+N^{-2}).\end{array}\end{equation} Then
\eqref{ratiobound} follows from \eqref{smoothdenom} and
\eqref{smoothnumer} as required.\\
On the other hand, when $x_j \in J$, by \eqref{lte0}, \eqref{lte2}
and Lemma \ref{lsmooth2}
\begin{equation}\label{s2}|(L^{M,N}(\vec{V}-\vec{v}))_i(x_j,t_k)|
\leq C[M^{-1}+
 \eps_i(h_r+h_{r+1})(1+\sum^n_{q=i}\frac{B_q (\sigma_r
 -h_r)}{\sqrt{\eps_q}})].\end{equation}
The cases $i \ge r$ and $i<r$
are treated separately.\\
Suppose first that $i \ge r$, then it is not hard to see that
\begin{equation}\label{jumpnumer1}\begin{array}{l}
|(L^{M,N}(\vec{V}-\vec{v}))_i(x_j,t_k)| \\
\leq C[M^{-1}+\eps_i(h_r+h_{r+1})(1+\frac{1}{\sqrt{\eps_i}})]
\\ \leq C[M^{-1}+(h_r+h_{r+1})\sqrt{\eps_i}] .\end{array}\end{equation}
Combining \eqref{barrierbound4} and \eqref{jumpnumer1},
\eqref{ratiobound} follows using \eqref{geom4} and the ordering of
the $\eps_i$.\\
On the other hand, if $i<r$ then $\eps_i \leq \eps_{r-1} < \eps_r$.
Also, either $d_r>0$ or $d_r=0.$\\
First, suppose that $d_r>0$. Then, by Lemma \ref{s1}, \[\sigma_r-h_r
\ge x^{(\frac{1}{2})}_{q,r} ;\; \mathrm{for} \;\; i \leq q \leq
r-1\] and so, by Lemma \ref{layers}\[\sum^{r-1}_{q=i}\frac{B_q
(\sigma_r
 -h_r)}{\sqrt{\eps_q}} \leq C\frac{B_r (\sigma_r
 -h_r)}{\sqrt{\eps_r}}.\] Combining this with \eqref{s2} gives
\begin{equation}\label{jumpnumer2}
|(L^{M,N}(\vec{V}-\vec{v}))_i(x_j,t_k)| \\
\leq C[M^{-1}+\frac{\eps_i}{\sqrt{\eps_r}}(h_r+h_{r+1})]
.\end{equation} Combining \eqref{barrierbound4} and
\eqref{jumpnumer2}, \eqref{ratiobound} follows using \eqref{geom4}
and the ordering of
the $\eps_i$.\\
Secondly, suppose that $d_r=0$. Then $d_{r-1}>0$ and $D^+ h_r<0$.
Then, by Lemma \ref{s1} with $r-1$ instead of $r$,  \[\sigma_r-h_r
\ge \sigma_{r-1}>\sigma_{r-1}-h_{r-1} \ge
x^{(\frac{1}{2})}_{q,{r-1}}\;\; \mathrm{for}\;\; i \leq q \leq r-2\]
 and so, by Lemma
\ref{layers}\[\sum^{r-2}_{q=i}\frac{B_q (\sigma_r
 -h_r)}{\sqrt{\eps_q}} \leq C\frac{B_{r-1} (\sigma_r
 -h_r)}{\sqrt{\eps_{r-1}}}\leq
 C\frac{B_{r-1}(\sigma_{r-1})}{\sqrt{\eps_{r-1}}}=C\frac{N^{-2}}{\sqrt{\eps_{r-1}}}.\]
Combining this with \eqref{s2} and \eqref{geom4} gives
\begin{equation}\label{jumpnumer3}\begin{array}{l}
|(L^{M,N}(\vec{V}-\vec{v}))_i(x_j,t_k)| \\
\leq C[M^{-1}+\eps_i(h_r+h_{r+1})(\frac{N^{-2}}{\sqrt{\eps_{r-1}}}+
\frac{1}{\sqrt{\eps_{r}}})]\\
\leq C[M^{-1}+\sqrt{\eps_i\eps_r}N^{-3}\ln N+\eps_i N^{-1}\ln N].
\end{array}\end{equation}
Combining \eqref{barrierbound4} and \eqref{jumpnumer3},
\eqref{ratiobound} follows using the ordering of
the $\eps_i$ and noting that in this case
the middle term in the denominator is used to bound the
middle term in the numerator.\\
\eop\end{proof} Before the singular part of the error is estimated
the following lemmas are established.
\begin{lemma}\label{est1} Assume that $x_j \notin J$.  Let $A(x,t)$ satisfy
 (\ref{a1}) and (\ref{a2}). Then,
on $\Omega^{M,N}$, for each $1 \leq i \leq n$, the following
estimates hold
\begin{equation} |(L^{M,N}(\vec{W^L}-\vec{w^L}))_i(x_j,t_k)|\leq C(M^{-1}+\frac{(x_{j+1}-x_{j-1})^2}{\eps_1}).\end{equation} An analogous result holds for the $w^R_i$.
\end{lemma}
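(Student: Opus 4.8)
The plan is to bound the two terms on the right-hand side of \eqref{LTEWL} separately, combining the local truncation error estimates of Section 6 with the derivative bounds on $\vec w^L$ from Lemma \ref{lsingular}, and then to invoke the Comparison Principle is \emph{not} needed here since the statement is purely a truncation-error bound.

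First I would treat the temporal part. By \eqref{lte0}, $|(\frac{\partial}{\partial t}-D^-_t)w^L_i(x_j,t_k)|\le C(t_k-t_{k-1})\max_{s\in[t_{k-1},t_k]}|\frac{\partial^2 w^L_i}{\partial t^2}(x_j,s)|$. Since the mesh in time is uniform with $M$ intervals on $[0,T]$ we have $t_k-t_{k-1}\le CM^{-1}$, and Lemma \ref{lsingular} gives $|\frac{\partial^2 w^L_i}{\partial t^2}|\le CB_n^L(x_j)\le C$. Hence this term is at most $CM^{-1}$.

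Next, for the spatial part I would use the hypothesis $x_j\notin J$ to apply the fourth-order estimate \eqref{lte3}: $|\eps_i(\frac{\partial^2}{\partial x^2}-\delta^2_x)w^L_i(x_j,t_k)|\le C\eps_i(x_{j+1}-x_{j-1})^2\max_{s\in I_j}|\frac{\partial^4 w^L_i}{\partial x^4}(s,t_k)|$. By Lemma \ref{lsingular}, $|\frac{\partial^4 w^L_i}{\partial x^4}(s,t)|\le C\eps_i^{-1}\sum_{q=1}^n\eps_q^{-1}B_q^L(s)$, so $\eps_i|\frac{\partial^4 w^L_i}{\partial x^4}|\le C\sum_{q=1}^n\eps_q^{-1}B_q^L(s)$. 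Using the crude bounds $B_q^L(s)\le 1$ and $\eps_q\ge\eps_1$ from the assumed ordering of the parameters, this is at most $Cn\eps_1^{-1}\le C\eps_1^{-1}$, so the spatial term is bounded by $C(x_{j+1}-x_{j-1})^2/\eps_1$. Adding the two contributions and substituting into \eqref{LTEWL} yields the claimed bound.

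The statement for $w^R_i$ follows in exactly the same manner, using the analogous derivative estimates of Lemma \ref{lsingular} for the right boundary layer and the symmetry of the mesh construction and of \eqref{lte0}, \eqref{lte3} about $x=1/2$. There is no genuine obstacle here; the only point requiring a little care is that the $\eps_i$-weighted fourth derivative of $w^L_i$ is controlled \emph{independently of $i$}, so that after multiplying by the $\eps_i$ arising from the operator the worst denominator that survives is $\eps_1$ rather than $\eps_i$.
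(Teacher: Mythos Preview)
Your argument is correct and follows essentially the same route as the paper: split via \eqref{LTEWL}, bound the time part by \eqref{lte0} together with $|\partial_t^2 w_i^L|\le C$ from Lemma~\ref{lsingular} to get $CM^{-1}$, and bound the space part using \eqref{lte3} (available since $x_j\notin J$) together with the fourth-derivative estimate of Lemma~\ref{lsingular}, so that the $\eps_i$ cancels and $\sum_{q=1}^n B_q^L/\eps_q\le C/\eps_1$. The paper's proof is identical in structure, only more terse.
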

\begin{proof} Since $x_j \notin J$, from (\ref{lte3}) and Lemma \ref{lsingular},
it follows that
\begin{equation*}\begin{array}{lcl}|(L^{M,N}(\vec{W^L}-\vec{w^L}))_i(x_j,t_k)|&=&
|((\frac{\partial}{\partial t}-D^-_t)-E(\frac{\partial^2}{\partial
x^2}-\delta^2_x))\vec{w}^L _i(x_j,t_k)|\\ %&\leq
%&C((t_k-t_{k-1})\max_{s\;\in\;[t_{k-1},t_k]}|\psi^{(2)}(x_j,s)|+(x_{j+1}-x_{j-1})^2
%\ds\max_{s\;\in\;I_j}|\eps_{i} w_i^{L,(4)}(s,t_k)| \\
& \leq &
C(M^{-1}+(x_{j+1}-x_{j-1})^2\;\ds\max_{s\;\in\;I_j}\ds\sum_{q\;=\;1}^n
\dfrac{B^{L}_{q}(s)}{\eps_q}) \\ & \leq &
C(M^{-1}+\frac{(x_{j+1}-x_{j-1})^2}{\eps_1})\end{array}\end{equation*}
as required.\eop\end{proof}

The following decompositions are introduced
\[w^L_i=\sum_{m=1}^{r+1}w_{i,m},\] where the components %of the decomposition
are defined by
\[w_{i,r+1}=\left\{ \begin{array}{ll} p^{(s)}_{i} & {\rm on}\;\;[0,x^{(s)}_{r,r+1})\\
 w^L_i & {\rm otherwise} \end{array}\right. \]
and for each $m$,  $r \ge m \ge 2$,
\[w_{i,m}=\left\{ \begin{array}{ll} p^{(s)}_{i} & \rm{on} \;\; [0,x^{(s)}_{m-1,m})\\
w^L_i-\ds\sum_{q=m+1}^{r+1} w_{i,q} & {\rm otherwise}
\end{array}\right. \]
and
\[w_{i,1}=w^L_i-\sum_{q=2}^{r+1} w_{i,q}\;\; \rm{on} \;\; [0,1]. \]
Here the polynomials $p^{(s)}_{i}$, for $s=3/2$ and $s=1$, are
defined by
\\ \[ p^{(3/2)}_{i}(x,t)=\sum_{q=0}^3
\frac{\partial^q w^{(L)}_i}{\partial
x^q}(x^{(3/2)}_{r,r+1},t)\frac{(x-x^{(3/2)})^q}{q!}\]\\ and \\
\[ p^{(1)}_{i}(x,t)=\sum_{q=0}^4
\frac{\partial^q w^{(L)}_i}{\partial
x^q}(x^{(1)}_{r,r+1},t)\frac{(x-x^{(1)})^q}{q!}.\]
%Start of general lemma

\begin{lemma}\label{general} Assume that $d_r>0$. Let $A(x,t)$ satisfy (\ref{a1}) and (\ref{a2}). Then, for
each $1 \leq i \leq n$,   there exists a decomposition
\[ w^L_i=\sum_{q=1}^{r+1}w_{i,q}, \] for which the following
estimates hold for each $q$ and $r$,  $1 \le q \le r$,
\[|\frac{\partial^2 w_{i,q}}{\partial x^2}(x_j,t_k)| \leq C \min \{\frac{1}{\eps_q}, \frac{1}{\eps_i}\}B^L_q(x_j),\]
\[|\frac{\partial^3 w_{i,q}}{\partial x^3}(x_j,t_k)| \leq C \min \{\frac{1}{\eps_q^{3/2}}, \frac{1}{\eps_i \sqrt{\eps_q}}\}B^L_q(x_j),\]
\[|\frac{\partial^3 w_{i,r+1}}{\partial x^3}(x_j,t_k)| \leq C \min \{\sum_{q=r+1}^n \frac{B^L_q(x_j)}{\eps_q^{3/2}}, \sum_{q=r+1}^n \frac{B^L_q(x_j)}{\eps_i \sqrt{\eps_q}}\},\]
\[|\frac{\partial^4 w_{i,q}}{\partial x^4}(x_j,t_k)| \leq C \frac{B^L_q(x_j)}{\eps_i \eps_q},\]
\[|\frac{\partial^4 w_{i,r+1}}{\partial x^4}(x_j,t_k)| \leq
C\sum_{q=r+1}^n \frac{B^L_q(x_j)}{\eps_i \eps_q}.\]
Analogous results hold for the $w^R_i$ and their derivatives.
\end{lemma}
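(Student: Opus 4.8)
The plan is to construct the decomposition $w^L_i = \sum_{q=1}^{r+1} w_{i,q}$ explicitly as in the definitions preceding the lemma, and then to prove the stated bounds by a \emph{downward} induction on the index $q$, running $q = r+1, r, \ldots, 1$. The key observation is that each $w_{i,q}$ is, by construction, either a polynomial in $x$ of low degree (on an initial subinterval $[0, x^{(s)}_{q-1,q})$) or a difference of $w^L_i$ with a sum of the already-analysed pieces $w_{i,q+1}, \ldots, w_{i,r+1}$. So the whole argument reduces to two ingredients: (i) controlling the polynomial pieces $p^{(s)}_i$ and their derivatives, and (ii) propagating bounds across the matching points $x^{(s)}_{q-1,q}$.

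First I would treat $w_{i,r+1}$. On $[0, x^{(s)}_{r,r+1})$ it equals the Taylor polynomial $p^{(s)}_i$ centred at $x^{(s)}_{r,r+1}$; its derivatives there are constants (degree-$3$ or degree-$4$ polynomial), whose sizes are exactly the values $\frac{\partial^q w^L_i}{\partial x^q}(x^{(s)}_{r,r+1},t)$, estimated by Lemma \ref{lsingular}. Using the definition \eqref{x1} of $x^{(s)}_{r,r+1}$ — which equates $B^L_q/\eps_q^{s}$ for the two indices at that point — together with Lemma \ref{layers}, one converts the sum $\sum_{q=r+1}^n B^L_q(x^{(s)}_{r,r+1})/\eps_q^{s}$ at the matching point into a bound at a general $x_j$, picking up the factor $e^{(x_j - x^{(s)}_{r,r+1})\sqrt{\alpha/\eps_q}}$ which, being $\le C$ on the relevant subinterval, is absorbed. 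On $[x^{(s)}_{r,r+1}, 1]$, $w_{i,r+1} = w^L_i$ and the bounds are immediate from Lemma \ref{lsingular}. The two choices $s = 3/2$ (for the third-derivative bound) and $s = 1$ (for the fourth-derivative bound) are what make the corresponding $p^{(s)}_i$ carry the right number of derivatives, so the third- and fourth-order estimates are handled with the matching polynomial.

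Next, the inductive step for $w_{i,q}$, $2 \le q \le r$: on $[0, x^{(s)}_{q-1,q})$ it is again a polynomial $p^{(s)}_i$ centred at $x^{(s)}_{q-1,q}$, so its derivatives are controlled exactly as above using Lemma \ref{lsingular} at that point plus Lemma \ref{layers} to pass to general $x_j$; elsewhere $w_{i,q} = w^L_i - \sum_{m=q+1}^{r+1} w_{i,m}$, a difference of already-bounded terms, so the triangle inequality gives the estimate — here one checks that the dominant contribution is the single layer function $B^L_q(x_j)$, the higher-index pieces $B^L_m$ for $m > q$ being strictly smaller by the orderings in Lemma \ref{layers} on the relevant range. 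The $\min\{\eps_q^{-1}, \eps_i^{-1}\}$-type bounds come from combining the crude $\eps_q$-bound valid away from the $i$-th layer with the sharper $\eps_i$-bound from the last line of Lemma \ref{lsingular}; one uses whichever is smaller according to whether $i \le q$ or $i > q$. Finally $w_{i,1} = w^L_i - \sum_{q=2}^{r+1} w_{i,q}$ on $[0,1]$ absorbs the remainder and needs no new estimate beyond the triangle inequality. The analogous statement for $w^R_i$ follows by the mirror-image construction, exactly as in Lemma \ref{lsingular} and Lemma \ref{layers}.

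The main obstacle I anticipate is the bookkeeping at the matching points: verifying that the polynomial $p^{(s)}_i$ and the "otherwise" branch are glued continuously enough (up to the needed order of derivatives) at $x^{(s)}_{q-1,q}$, and that the exponential factors $e^{(x_j - x^{(s)}_{q-1,q})\sqrt{\alpha/\eps_q}}$ acquired when transporting a bound from the matching point to a general mesh point $x_j$ really do stay bounded — this is where the inequality $x^{(s)}_{q-1,q} < 2s\sqrt{\eps_q/\alpha}$ from Lemma \ref{layers} and the relation between $\sigma_r$ and $\ln N$ (via \eqref{geom2}, \eqref{geom9}) are essential, and one has to be careful that $s \le 3/2 < 2$ keeps everything in the regime covered by Lemma \ref{layers} and Lemma \ref{s1}. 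Once the exponential factors are tamed, the rest is a routine but lengthy application of Lemma \ref{lsingular} term by term.
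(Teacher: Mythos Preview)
Your approach is essentially the paper's: the same explicit decomposition, the two choices $s=3/2$ and $s=1$ for the third- and fourth-derivative estimates respectively, downward recursion on the index, and the crossing-point inequalities from Lemma~\ref{layers} to collapse the sums in Lemma~\ref{lsingular} to a single dominant term. Two small points of divergence are worth noting.

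First, your worry about the exponential factor $e^{(x_j-x^{(s)}_{q-1,q})\sqrt{\alpha/\eps_q}}$ is a non-issue: on the polynomial region $[0,x^{(s)}_{q-1,q}]$ one has $x_j\le x^{(s)}_{q-1,q}$, so $B^L_q(x^{(s)}_{q-1,q})\le B^L_q(x_j)$ by bare monotonicity, and the bound at the matching point carries to general $x_j$ with constant~$1$. No appeal to \eqref{geom2} or \eqref{geom9} is needed here; those are only used later in Lemma~\ref{s1}.

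Second, for the lower-order derivative bounds (second from third when $s=3/2$; third and second from fourth when $s=1$) the paper does \emph{not} use the triangle inequality you sketch. Instead it exploits the fact that $w_{i,m}\equiv 0$ on $[x^{(s)}_{m,m+1},1]$ for $1\le m\le r$, so that
\[
\frac{\partial^2 w_{i,m}}{\partial x^2}(x,t)=-\int_x^{x^{(s)}_{m,m+1}}\frac{\partial^3 w_{i,m}}{\partial x^3}(s,t)\,ds,
\]
and integrating the already-established bound $C\eps_m^{-3/2}B^L_m(s)$ picks up exactly one factor of $\sqrt{\eps_m}$. This integration step is the mechanism that produces the correct powers of $\eps_m$ in the second- and third-derivative estimates, and it is cleaner than re-invoking Lemma~\ref{lsingular} for each order separately. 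Your triangle-inequality route can be made to work but requires more careful index bookkeeping to get the $\min\{\eps_q^{-1},\eps_i^{-1}\}$ form.
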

\begin{proof}
%%%%%%%%%%%%%%%%%%%%%%%%%%%%%%%
First consider the decomposition corresponding $s=3/2$.\\
From the above definitions it follows that, for each $m$, $1 \leq m
\leq r$,
$w_{i,m}=0 \;\; \rm{on} \;\; [x^{(3/2)}_{m,m+1},1]$.\\

To establish the bounds on the third derivatives it is seen that:
for $x \in [x^{(3/2)}_{r,r+1},1]$, Lemma \ref{lsingular} and $x \geq
x^{(3/2)}_{r,r+1}$ imply that
\[|\frac{\partial^3 w_{i,r+1}}{\partial x^3}(x,t)| =
|\frac{\partial^3 w^L_{i}}{\partial x^3}(x,t)| \leq
C\sum_{q=1}^n \frac{B^L_q(x)}{\eps_q^{3/2}} \leq C\sum_{q=r+1}^n
\frac{B^L_q(x)}{\eps_q^{3/2}};\]

for $x \in [0, x^{(3/2)}_{r,r+1}]$, Lemma \ref{lsingular} and $x
\leq x^{(3/2)}_{r,r+1}$ imply that
\[|\frac{\partial^3
w_{i,r+1}}{\partial x^3}(x,t)| = |\frac{\partial^3 w^L_{i}}{\partial
x^3}(x^{(3/2)}_{r,r+1},t)|\] \[\leq \sum_{q=1}^{n}
\frac{B^L_q(x^{(3/2)}_{r,r+1})}{\eps_q^{3/2}} \leq \sum_{q=r+1}^{n}
\frac{B^L_q(x^{(3/2)}_{r,r+1})}{\eps_q^{3/2}} \leq \sum_{q=r+1}^{n}
\frac{B^L_q(x)}{\eps_q^{3/2}};\]

and for each $m=r, \;\; \dots \;\;,2$, it follows that\\

for $x \in [x^{(3/2)}_{m,m+1},1]$, $w_{i,m}^{(3)}=0;$

for $x \in [x^{(3/2)}_{m-1,m},x^{(3/2)}_{m,m+1}]$, Lemma
\ref{lsingular} implies that
\[|\frac{\partial^3
w_{i,m}}{\partial x^3}(x,t)| \leq |\frac{\partial^3
w^L_{i}}{\partial x^3}(x,t)|+\sum_{q=m+1}^{r+1}|\frac{\partial^3
w_{i,q}}{\partial x^3}(x,t)|\] \[ \leq C\sum_{q=1}^n
\frac{B^L_q(x)}{\eps_q^{3/2}} \leq C\frac{B^L_m(x)}{\eps_m^{3/2}};\]

for $x \in [0, x^{(3/2)}_{m-1,m}]$, Lemma \ref{lsingular} and $x
\leq x^{(3/2)}_{m-1,m}$ imply that
\[|\frac{\partial^3
w_{i,m}}{\partial x^3}(x,t)| =|\frac{\partial^3 w^L_{i}}{\partial
x^3}(x^{(3/2)}_{m-1,m},t)|\] \[ \leq C\sum_{q=1}^n
\frac{B^L_q(x^{(3/2)}_{m-1,m})}{\eps_q^{3/2}} \leq
C\frac{B^L_m(x^{(3/2)}_{m-1,m})}{\eps_m^{3/2}} \leq
C\frac{B^L_m(x)}{\eps_m^{3/2}};
\]

for $x \in [x^{(3/2)}_{1,2},1],\;\; \frac{\partial^3
w_{i,1}}{\partial x^3}=0;$

for $x \in [0, x^{(3/2)}_{1,2}]$, Lemma \ref{lsingular} implies that
\[|\frac{\partial^3
w_{i,1}}{\partial x^3}(x,t)| \leq |\frac{\partial^3
w^L_{i}}{\partial x^3}(x,t)|+\sum_{q=2}^{r+1}|\frac{\partial^3
w_{i,q}}{\partial x^3}(x,t)| \leq C\sum_{q=1}^n
\frac{B^L_q(x)}{\eps_q^{3/2}} \leq C\frac{B^L_1(x)}{\eps_1^{3/2}}.\]

For the bounds on the second derivatives note that, for each $m$, $1
\leq m \leq r $ :

for $x \in [x^{(3/2)}_{m,m+1},1],\;\; \frac{\partial^2
w_{i,m}}{\partial x^2}=0;$

for $x \in [0, x^{(3/2)}_{m,m+1}],\;\; \int_x^{x^{(3/2)}_{m,m+1}}
\frac{\partial^3 w_{i,m}}{\partial x^3}(s,t)ds =\\ \frac{\partial^2
w_{i,m}}{\partial x^2}(x^{(3/2)}_{m-1,m},t)- \frac{\partial^2
w_{i,m}}{\partial x^2}(x,t)= -\frac{\partial^2
w_{i,m}}{\partial x^2}(x,t)$ \\
and so
\[|\frac{\partial^2
w_{i,m}}{\partial x^2}(x,t)| \leq
\int_x^{x^{(3/2)}_{m,m+1}}|\frac{\partial^3 w_{i,m}}{\partial
x^3}(s,t)|ds \leq \frac{C}{\eps_m^{3/2}}\int_{x}^{x^{(3/2)}_{m,m+1}}
B^L_m(s)ds \leq C\frac{B^L_m(x)}{\eps_m}.\] This completes the proof
of the estimates for $s=3/2$.\\
%%%%%%%%%%%%%%%%%%%%%%%%%%%%%%%%%%%%%%%%%%%%
For the  estimates in the case $s=1$ consider the decomposition
\[w^L_i=\sum_{m=1}^{r+1}w_{i,m}.\] %where the components %of the decomposition
%are defined by
%\[w_{i,r+1}=\left\{ \begin{array}{ll} p^{(1)}_{i;r,r+1} & {\rm on}\;\;[0,x^{(1)}_{r,r+1})\\
% w^L_i & {\rm otherwise} \end{array}\right. \]
%and for each $m$,  $r \ge m \ge 2$,
%\[w_{i,m}=\left\{ \begin{array}{ll} p^{(1)}_{i;m-1,m} & \rm{on} \;\; [0,x^{(1)}_{m-1,m})\\
%w^L_i-\sum_{q=m+1}^{r+1} w_{i,q} & {\rm otherwise}
%\end{array}\right. \]
%and
%\[w_{i,1}=w^L_i-\sum_{q=2}^{r+1} w_{i,q}\;\; \rm{on} \;\; [0,1]. \]
From the above definitions it follows that, for each $m$, $1 \leq m
\leq r$,
$w_{i,m}=0 \;\; \rm{on} \;\; [x^{(1)}_{m,m+1},1]$.\\
%%%%%%%%%%%%%%%%%%%%%%%%%%%%%%%%%%%%
To establish the bounds on the fourth derivatives it is seen that:

for $x \in [x^{(1)}_{r,r+1},1]$, Lemma \ref{lsingular} and $x \geq
x^{(1)}_{r,r+1}$ imply that
\[|\eps_i \frac{\partial^4 w_{i,r+1}}{\partial x^4}(x,t)| =|\eps_i \frac{\partial^4 w^L_{i}}{\partial x^4}(x,t)| \leq
C\sum_{q=1}^n \frac{B^L_q(x)}{\eps_q} \leq C\sum_{q=r+1}^n
\frac{B^L_q(x)}{\eps_q};\]

for $x \in [0, x^{(1)}_{r,r+1}]$, Lemma \ref{lsingular} and $x \leq
x^{(1)}_{r,r+1}$ imply that
\[|\eps_i \frac{\partial^4 w_{i,r+1}}{\partial x^4}(x,t)| =|\eps_i \frac{\partial^4 w^L_{i}}{\partial x^4}(x^{(1)}_{r,r+1},t)|
\] \[\leq \sum_{q=1}^{n} \frac{B^L_q(x^{(1)}_{r,r+1})}{\eps_q} \leq
C\sum_{q=r+1}^{n} \frac{B^L_q(x^{(1)}_{r,r+1})}{\eps_q} \leq
C\sum_{q=r+1}^{n} \frac{B^L_q(x)}{\eps_q};\]

and for each $m=r, \;\; \dots \;\;,2$, it follows that\\

for $x \in [x^{(1)}_{m,m+1},1]$,\;\; $\frac{\partial^4
w_{i,m}}{\partial x^4}=0;$

for $x \in [x^{(1)}_{m-1,m},x^{(1)}_{m,m+1}]$, Lemma \ref{lsingular}
implies that
\[|\eps_i \frac{\partial^4 w_{i,m}}{\partial x^4}(x,t)| \leq |\eps_i \frac{\partial^4 w^L_{i}}{\partial x^4}(x,t)|+\sum_{q=m+1}^{r+1}|\eps_i \frac{\partial^4 w_{i,q}}{\partial x^4}(x,t)|
\] \[\leq C\sum_{q=1}^n \frac{B^L_q(x)}{\eps_q} \leq
C\frac{B^L_m(x)}{\eps_m};\]

for $x \in [0, x^{(1)}_{m-1,m}]$, Lemma \ref{lsingular} and $x \leq
x^{(1)}_{m-1,m}$ imply that
\[|\eps_i \frac{\partial^4 w_{i,m}}{\partial x^4}(x,t)| =
|\eps_i \frac{\partial^4 w^L_{i}}{\partial x^4}(x^{(1)}_{m-1,m},t)|
\] \[\leq C\sum_{q=1}^n \frac{B^L_q(x^{(1)}_{m-1,m})}{\eps_q} \leq
C\frac{B^L_m(x^{(1)}_{m-1,m})}{\eps_m} \leq
C\frac{B^L_m(x)}{\eps_m};
\]

for $x \in [x^{(1)}_{1,2},1],\;\; \frac{\partial^4 w_{i,1}}{\partial
x^4}=0;$

for $x \in [0, x^{(1)}_{1,2}]$, Lemma \ref{lsingular} implies that
\[|\eps_i \frac{\partial^4 w_{i,1}}{\partial
x^4}(x,t)| \leq |\eps_i \frac{\partial^4 w^L_{i}}{\partial
x^4}(x,t)|+\sum_{q=2}^{r+1}|\eps_i \frac{\partial^4
w_{i,q}}{\partial x^4}(x,t)|\] \[\leq C\sum_{q=1}^n
\frac{B^L_q(x)}{\eps_q} \leq C\frac{B^L_1(x)}{\eps_1}.\]

For the bounds on the second and third derivatives note that, for
each $m$, $1 \leq m \leq r $ :

for $x \in [x^{(1)}_{m,m+1},1],\;\; \frac{\partial^2
w_{i,m}}{\partial x^2}=0=\frac{\partial^3 w_{i,m}}{\partial x^3};$

for $x \in [0, x^{(1)}_{m,m+1}],\;\;
\ds\int_x^{x^{(1)}_{m,m+1}}\eps_i \frac{\partial^4 w_{i,m}}{\partial
x^4}(s,t)ds \\= \eps_i \frac{\partial^3 w_{i,m}}{\partial
x^3}(x^{(1)}_{m,m+1},t)- \eps_i \frac{\partial^3 w_{i,m}}{\partial
x^3}(x,t)= -\eps_i \frac{\partial^3 w_{i,m}}{\partial
x^3}(x,t)$ \\
and so
\[|\eps_i \frac{\partial^3 w_{i,m}}{\partial
x^3}(x,t)| \leq \int_x^{x^{(1)}_{m,m+1}}|\eps_i \frac{\partial^4
w_{i,1}}{\partial x^4}(s,t)|ds \\ \leq
\frac{C}{\eps_m}\int_{x}^{x^{(1)}_{m,m+1}} B^L_m(s)ds \leq
C\frac{B^L_m(x)}{\sqrt\eps_m}.\] In a similar way, it can be shown
that
\[|\eps_i \frac{\partial^2 w_{i,m}}{\partial
x^2}(x,t)| \leq C B^L_m(x).\]
%Finally, for $x_j \in J$
The proof for the $w^R_i$ and their derivatives is similar. \eop
\end{proof}
\begin{lemma}\label{general1} Assume that $d_r>0$. Let $A(x)$ satisfy (\ref{a1}) and (\ref{a2}).
Then, for each $i$, $1 \leq i \leq n$, and each $(x_j,t_k) \in
\Omega^{M,N}$
\begin{equation}|\label{gen1}(L^{M,N}(\vec{W^L}-\vec{w}^L)_i(x_j,t_k))| \leq C
[M^{-1}+B^L_r(x_{j-1})+\frac{x_{j+1}-x_{j-1}}{\sqrt{\eps_{r+1}}}]
\end{equation}
and
\begin{equation}\label{gen2}|(L^{M,N}(\vec{W^L}-\vec{w}^L)_i(x_j,t_k))| \leq C[M^{-1}+\eps_i
\sum^r_{q=1}\frac{B^L_{q}(x_{j-1})}{\eps_q}+\frac{\eps_i}{\eps_{r+1}}
\frac{x_{j+1}-x_{j-1}}{\sqrt{\eps_{r+1}}}].\end{equation}
Assuming, furthermore, that $x_j \notin J,$ then
\begin{equation}\label{gen3}|(L^{M,N}(\vec{W^L}-\vec{w}^L)_i(x_j,t_k))| \leq C
[M^{-1}+B^L_r(x_{j-1})+\frac{(x_{j+1}-x_{j-1})^2}{\eps_{r+1}}].
\end{equation}
Analogous results hold for the $W^R-w^R_i$ and their derivatives.
\end{lemma}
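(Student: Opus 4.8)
The plan is to deduce all three inequalities from the local-truncation-error splitting \eqref{LTEWL}, treating the time part and the space part of $\vec L^{M,N}(\vec W^L-\vec w^L)_i$ separately. The time part is immediate: by \eqref{lte0} and the bound $|\partial^2 w^L_i/\partial t^2|\le CB^L_n(x)\le C$ of Lemma \ref{lsingular} it is at most $C(t_k-t_{k-1})\le CM^{-1}$, which supplies the $M^{-1}$ appearing in each of \eqref{gen1}--\eqref{gen3}. The space part $\eps_i|(\frac{\partial^2}{\partial x^2}-\delta^2_x)w^L_i|$ I would not handle directly but through the component-wise decomposition $w^L_i=\sum_{q=1}^{r+1}w_{i,q}$ of Lemma \ref{general} (available since $d_r>0$), estimating $\eps_i|(\frac{\partial^2}{\partial x^2}-\delta^2_x)w_{i,q}|$ separately for each $q$ and summing.

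For the tail component $w_{i,r+1}$, which is smooth on the whole stencil $I_j$, I would use \eqref{lte2} when proving \eqref{gen1} and \eqref{gen2}, and, since $x_j\notin J$ there, \eqref{lte3} when proving \eqref{gen3}, inserting the bounds on $\partial^3 w_{i,r+1}/\partial x^3$ and $\partial^4 w_{i,r+1}/\partial x^4$ from Lemma \ref{general}. Because $B^L_q\le1$ and $\eps_{r+1}\le\eps_q$ for $q\ge r+1$, the resulting sums collapse to $\sum_{q\ge r+1}B^L_q/\sqrt{\eps_q}\le C/\sqrt{\eps_{r+1}}$ and $\sum_{q\ge r+1}B^L_q/\eps_q\le C/\eps_{r+1}$; selecting in each case the branch of the $\min$ in Lemma \ref{general} that carries the right power of $\eps_i$ (using $\eps_i\ge\eps_{r+1}$ when $i>r$), the $w_{i,r+1}$ contribution comes out as $C(x_{j+1}-x_{j-1})/\sqrt{\eps_{r+1}}$ for \eqref{gen1}, as $C(\eps_i/\eps_{r+1})(x_{j+1}-x_{j-1})/\sqrt{\eps_{r+1}}$ for \eqref{gen2}, and as $C(x_{j+1}-x_{j-1})^2/\eps_{r+1}$ for \eqref{gen3}.

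For each remaining component $w_{i,q}$ with $1\le q\le r$, the point is that $w_{i,q}$ vanishes on $[x^{(s)}_{q,q+1},1]$ — this is established in the proof of Lemma \ref{general} by a downward induction on $q$ — so $w_{i,q}$ contributes to the truncation error at $x_j$ only when the stencil $I_j$ meets $[0,x^{(s)}_{q,q+1}]$, i.e.\ only when $x_{j-1}<x^{(s)}_{q,q+1}$. On such stencils I would apply \eqref{lte1} with the second-derivative estimate $\eps_i|\partial^2 w_{i,q}/\partial x^2|\le C\eps_i\min\{\eps_q^{-1},\eps_i^{-1}\}B^L_q$ of Lemma \ref{general}, and the monotonicity of $B^L_q$ to replace $\max_{I_j}B^L_q$ by $B^L_q(x_{j-1})$. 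For \eqref{gen2} one keeps the factor $\eps_i/\eps_q$ and sums to obtain the term $\eps_i\sum_{q=1}^rB^L_q(x_{j-1})/\eps_q$; for \eqref{gen1} and \eqref{gen3} one uses $\eps_i\min\{\eps_q^{-1},\eps_i^{-1}\}\le1$ together with the elementary ordering $B^L_q(x)<B^L_r(x)$ for $q<r$ to get $\sum_{q=1}^rB^L_q(x_{j-1})\le nB^L_r(x_{j-1})=CB^L_r(x_{j-1})$. Adding the $r+1$ contributions to the $M^{-1}$ from the time part yields the three estimates, and the statements for $\vec W^R-\vec w^R$ follow by the reflection $x\mapsto1-x$ and the corresponding properties of the $B^R_q$, using the analogous decomposition of $w^R_i$.

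I expect the main difficulty to lie in the layer-component step rather than in any individual inequality: one must check that each piecewise-defined $w_{i,q}$ genuinely vanishes beyond $x^{(s)}_{q,q+1}$ and is regular enough on every stencil crossing the junction $x^{(s)}_{q-1,q}$ for \eqref{lte1}--\eqref{lte3} to be applicable there, and then carry out the case distinctions — on the location of $x_{j-1}$ relative to the nested points $x^{(s)}_{1,2}<\cdots<x^{(s)}_{r,r+1}$ from Lemma \ref{layers}, and on whether $x_j\in J$ — so that every surviving term is one of those displayed in \eqref{gen1}--\eqref{gen3}. The rest is bookkeeping with bounds already packaged in Lemmas \ref{lsingular}, \ref{layers} and \ref{general}.
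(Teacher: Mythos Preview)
Your proposal is correct and follows essentially the same route as the paper: split via \eqref{LTEWL}, bound the time part by \eqref{lte0} and Lemma \ref{lsingular}, decompose $w^L_i=\sum_{q=1}^{r+1}w_{i,q}$ via Lemma \ref{general}, apply \eqref{lte1} with the second-derivative bounds to the pieces $q\le r$ and \eqref{lte2} (respectively \eqref{lte3} when $x_j\notin J$) with the third- (respectively fourth-) derivative bounds to the tail piece, then choose the appropriate branch of each $\min$. Your extra worries about support and junction regularity of the $w_{i,q}$ are unnecessary here, since the derivative bounds of Lemma \ref{general} hold on all of $[0,1]$ and \eqref{lte1} applies directly without any case split on the position of $x_{j-1}$.
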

\begin{proof} Using \eqref{LTEWL}, \eqref{lte0} and the bound in
Lemma \ref{lsingular}, for any $(x_j,t_k) \in \Omega^{M,N}$,
\begin{equation} \label{ls1}|(L^{M,N}(\vec{W^L}-\vec{w}^L)_i(x_j,t_k))| \leq
C[(t_k-t_{k-1})+|\eps_i(\delta^2_x-\frac{\partial^2}{\partial
x^2})w^L_i(x_j, t_k)| ].
\end{equation}
From the decompositions and bounds in Lemma \ref{general}, with
\eqref{lte1} and \eqref{lte2}, it follows from \eqref{ls1} that
\begin{equation} \begin{array}{l} |\eps_i(\delta^2_x-\frac{\partial^2}{\partial
x^2})w^L_i(x_j, t_k)| \\ \\\leq
C[\sum_{q=1}^{r}|\eps_i(D^2_x-\frac{\partial^2}{\partial
x^2})w_{i,q}(x_j, t_k)|+ |\eps_i(D^2_x-\frac{\partial^2}{\partial
x^2})w_{i,r+1}(x_j, t_k)] \\ \\
\leq C[\sum_{q=1}^{r}\max_{s \in I_j}|\eps_iw_{i,q}^{(2)}(s,t_k)|
+(x_{j+1}-x_{j-1})\max_{s \in
I_j}|\eps_iw_{i,r+1}^{(3)}(s,t_k)|)]\\ \\ \leq
C[\sum_{q=1}^{r}\min\{\frac{\eps_i}{\eps_q},1\}B^L_q
(x_{j-1})+(x_{j+1}-x_{j-1})\min\{\frac{\eps_i}{\eps_{r+1}},1\}\frac{B^L_{r+1}
(x_{j-1})}{\sqrt{\eps_{r+1}}}].
\end{array}\end{equation}
Substituting $1$ for each of the $\min$ expressions gives
\eqref{gen1} and \eqref{gen2} is obtained by substituting the
appropriate ratio $\frac{\eps_i}{\eps_q}$ in each such
expression.\\
In the remaining case when $x_j \notin J$, \eqref{lte3} can be
used instead of \eqref{lte2}, and it follows by a similar argument
to the above that
\begin{equation}  |\eps_i(\delta^2_x-\frac{\partial^2}{\partial
x^2})w^L_i(x_j, t_k)| \leq
C[\sum_{q=1}^{r}\min\{\frac{\eps_i}{\eps_q},1\}B^L_q
(x_{j-1})+\frac{(x_{j+1}-x_{j-1})^2}{\eps_{r+1}}].
\end{equation}
Substituting $1$ for the $\min$ expression, as before, gives
\eqref{gen3}.\\
The proof for the $w^R_i$ and their derivatives is similar.\eop
\end{proof}

\begin{lemma}\label{est3} Let $A(x,t)$ satisfy (\ref{a1}) and (\ref{a2}).
Then, on $\Omega^{M,N}$, for each $1 \leq i \leq n$, the following
estimates hold
\begin{equation} |(L^{M,N}(\vec{W^L}-\vec{w^L}))_i(x_j,t_k)|\leq
C(M^{-1}+B^L_n(x_{j-1})).\end{equation} An analogous result holds for the $w^R_i$.
\end{lemma}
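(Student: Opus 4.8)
The plan is to read off this crude bound directly from the truncation-error splitting \eqref{LTEWL}, combined with the \emph{unsharpened} second-order estimate \eqref{lte1}. The reason for using \eqref{lte1} rather than \eqref{lte2} or \eqref{lte3} is twofold: it holds at \emph{every} mesh point, so no hypothesis $x_j \notin J$ is needed, and it involves only $\partial^2 w^L_i/\partial x^2$, whose bound in Lemma \ref{lsingular} can be multiplied by $\eps_i$ and controlled without ever dividing by a small parameter.

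First I would dispatch the temporal part. By \eqref{lte0} the temporal truncation error is at most $C(t_k-t_{k-1})\max_{s\in[t_{k-1},t_k]}|\partial^2 w^L_i/\partial t^2(x_j,s)|$; Lemma \ref{lsingular} gives $|\partial^2 w^L_i/\partial t^2|\le CB^L_n(x_j)\le C$, and the temporal mesh is uniform with $t_k-t_{k-1}=T/M$, so this piece contributes at most $CM^{-1}$.

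Next, the spatial part. Applying \eqref{lte1} to $w^L_i$ and then the bound $|\partial^2 w^L_i/\partial x^2(s,t_k)|\le C\sum_{q=i}^n B^L_q(s)/\eps_q$ of Lemma \ref{lsingular},
\[
|\eps_i(\tfrac{\partial^2}{\partial x^2}-\delta^2_x)w^L_i(x_j,t_k)|\le C\max_{s\in I_j}\sum_{q=i}^n \frac{\eps_i}{\eps_q}B^L_q(s).
\]
Three elementary facts then finish it: $\eps_i/\eps_q\le 1$ for $q\ge i$ by the assumed ordering $\eps_1<\cdots<\eps_n$; $B^L_q(s)\le B^L_n(s)$ for $q\le n$ by property (a) of the layer functions; and $B^L_n$ is monotone decreasing, so $\max_{s\in I_j}B^L_n(s)=B^L_n(x_{j-1})$ where $I_j=[x_{j-1},x_{j+1}]$. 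Hence the spatial part is bounded by $CB^L_n(x_{j-1})$.

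Adding the two contributions through \eqref{LTEWL} gives $|(L^{M,N}(\vec W^L-\vec w^L))_i(x_j,t_k)|\le C(M^{-1}+B^L_n(x_{j-1}))$, and the estimate for $\vec W^R-\vec w^R$ follows verbatim from the analogous bounds of Lemma \ref{lsingular} and the monotonicity of $B^R_n$. I do not anticipate a genuine obstacle here: the only thing one must notice is that this is meant to be a \emph{deliberately coarse} estimate, so one should resist using the sharper truncation-error bounds \eqref{lte2}/\eqref{lte3}, whose higher derivatives of $w^L$ carry negative powers of $\eps_1$ that the factor $\eps_i$ cannot absorb.
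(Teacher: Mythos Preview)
Your argument is correct and essentially identical to the paper's: both use the splitting \eqref{LTEWL}, handle the time term via \eqref{lte0} and the bound on $\partial^2 w^L_i/\partial t^2$ from Lemma \ref{lsingular}, and control the spatial term by applying \eqref{lte1} together with the second-derivative bound $|\partial^2 w^L_i/\partial x^2|\le C\sum_{q=i}^n B^L_q/\eps_q$, then use $\eps_i/\eps_q\le 1$, $B^L_q\le B^L_n$, and the monotonicity of $B^L_n$ to arrive at $CB^L_n(x_{j-1})$. The paper's write-up is merely more compressed; your spelling out of the ``three elementary facts'' is a helpful expansion but not a different route.
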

\begin{proof}
From $\;\eqref{lte1}\;$ and Lemma \ref{lsingular}, for each
$\;i=1,\dots,n\;$, it follows that on $\Omega^{M,N},$
\begin{equation*}\begin{array}{lcl}|(L^{M,N}(\vec{W^L}-\vec{w^L}))_i(x_j,t_k)|&=&
|((\frac{\partial}{\partial t}-D^-_t)-E(\frac{\partial^2}{\partial
x^2}-\delta^2_x))\vec{w}^L _i(x_j,t_k)|\\
& \leq &
C(M^{-1}+\eps_i\ds\sum_{q=i}^{n}\dfrac{B^L_q(x_{j-1})}{\eps_q})\\ &
\le & C(M^{-1}+B^L_n(x_{j-1})).\end{array}\end{equation*} The proof
for the $w^R_i$ and their derivatives is similar.\eop
\end{proof}

The following theorem provides the error estimate for the singular
component.
\begin{theorem} \label{singularerrorthm}Let $A(x,t)$ satisfy (\ref{a1}) and
(\ref{a2}). Let $\vec w$ denote the singular component of the exact
solution from (\ref{BVP}) and $\vec W$ the singular component of the
 discrete solution from (\ref{discreteBVP}).  Then
\begin{equation}\;\; ||\vec{W}-\vec{w}|| \leq C(M^{-1}+N^{-2}(\ln N)^{3}). \end{equation}
\end{theorem}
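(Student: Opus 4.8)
The plan is to obtain the estimate from the Comparison Principle, Lemma~\ref{comparison}, using the barrier function $\vec{\Phi}$ of \eqref{barrier}. Since $\vec{W}=\vec{w}$ on $\Gamma^{M,N}$ one has $|W_i-w_i|\le\Phi_i$ trivially on $\Gamma^{M,N}$, so it suffices to establish
\[|(L^{M,N}(\vec{W}-\vec{w}))_i(x_j,t_k)|\le(L^{M,N}\vec{\Phi})_i(x_j,t_k)\]
at every point of $\Omega^{M,N}$ and for each $i$; as the constant $C$ in \eqref{barrier} is at our disposal, it is enough to show the ratio of the two sides is bounded by a fixed constant. Writing $\vec{W}-\vec{w}=(\vec{W}^L-\vec{w}^L)+(\vec{W}^R-\vec{w}^R)$, I would bound the two terms separately; by the symmetry $x\leftrightarrow 1-x$ it suffices to bound $L^{M,N}(\vec{W}^L-\vec{w}^L)$, and for mesh points $x_j>\sigma_n$ the left singular component is exponentially small there, so Lemma~\ref{est3} together with \eqref{geom0}--\eqref{geom3} gives a numerator bounded by $C(M^{-1}+N^{-2})$, matched by \eqref{barrierbound3}.

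For $x_j$ in the left half I would run the same region-by-region argument as in the proof of Theorem~\ref{smootherrorthm}, organised by the blocks of the Shishkin mesh. On the central region $(\sigma_n,1-\sigma_n)$ one has $x_j\notin J$ and all layer functions are $O(N^{-2})$ there, so a crude consistency estimate (Lemma~\ref{est3} when $\sigma_n<\tfrac14$, or \eqref{lte3} with Lemma~\ref{lsingular} in the degenerate case $\sigma_n=\tfrac14$, where then $\eps_n\ge C(\ln N)^{-2}$) bounds the numerator by $C(M^{-1}+N^{-2})$, again matched by \eqref{barrierbound3}. On the innermost block $[0,\sigma_1]$ the mesh is uniform with $h_1\le CN^{-1}\sqrt{\eps_1}\ln N$, so $x_j\notin J$ and \eqref{lte3} applied to the fourth-derivative bound of Lemma~\ref{lsingular}, together with $\eps_1\sum_{q}\eps_q^{-1}\le C$, gives a numerator bounded by $CN^{-2}(\ln N)^2$, absorbed by $\vec{\Phi}$. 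On a layer block $(\sigma_{r-1},\sigma_r]$ I would invoke the decompositions $w^L_i=\sum_{q=1}^{r+1}w_{i,q}$ of Lemma~\ref{general} and the truncation-error estimates of Lemma~\ref{general1}: for $x_j\notin J$ estimate \eqref{gen3} is used against \eqref{barrierbound3}, and for the transition point $x_j=\sigma_r\in J$ the sharper estimates \eqref{gen1} for the indices $i\ge r$ and \eqref{gen2} for $i<r$ are used against \eqref{barrierbound4}.

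The delicate point is the transition point $x_j=\sigma_r\in J$, so let me indicate how it closes. Assume first $d_r>0$, so Lemma~\ref{general} applies with index $r$ and $x_{j-1}=\sigma_r-h_r$. If $i\ge r$, estimate \eqref{gen1} yields a numerator $\le C(M^{-1}+B^L_r(\sigma_r-h_r)+(h_r+h_{r+1})/\sqrt{\eps_{r+1}})\le C(M^{-1}+N^{-2}+N^{-1}\ln N)$ by \eqref{geom0}, \eqref{geom3} and \eqref{geom4}, and this is dominated by the right-hand side of \eqref{barrierbound4} since $\eps_i\ge\eps_r$. If $i<r$ one uses \eqref{gen2}; here the key geometric fact is that the points $x^{(3/2)}_{q,r}$ of Lemma~\ref{layers} satisfy $x^{(3/2)}_{q,r}\le\sigma_r-h_r$ for $q<r$ (by \eqref{x4} and \eqref{geom9}), whence $B^L_q(\sigma_r-h_r)/\eps_q\le CN^{-2}/\eps_r$ by \eqref{x3}, \eqref{geom3} and \eqref{geom0}; hence the genuinely-layer term of \eqref{gen2} is $O(N^{-2})$ while its last term is $\le C(\eps_i/\eps_{r+1})N^{-1}\ln N$ by \eqref{geom4}, and both are dominated by the corresponding terms of \eqref{barrierbound4} using $\sqrt{\eps_r/\eps_{r+1}}<1$ and the ordering of the $\eps_i$. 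When $d_r=0$ (hence $D^+h_r<0$ and $d_{r-1}>0$) I would, exactly as in Theorem~\ref{smootherrorthm}, repeat the argument with $r-1$ in place of $r$, now using $B^L_{r-1}(\sigma_r-h_r)\le CB^L_{r-1}(\sigma_{r-1})=CN^{-2}$, and the middle ($N^{-2}$) term of the denominator \eqref{barrierbound4} to absorb the middle term of the numerator. Collecting all cases gives $|(L^{M,N}(\vec{W}-\vec{w}))_i|\le(L^{M,N}\vec{\Phi})_i$ on $\Omega^{M,N}$, and Lemma~\ref{comparison} with \eqref{barrierbound1} yields $\|\vec{W}-\vec{w}\|\le\|\vec{\Phi}\|\le C(M^{-1}+N^{-2}(\ln N)^3)$.

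I expect the main obstacle to be precisely this transition-point bookkeeping: at $x_j\in J$ only the first-order consistency estimate \eqref{lte2} is available, the local mesh width is only $O(N^{-1}\sqrt{\eps_{r+1}}\ln N)$ (or $O(N^{-1}\sqrt{\eps_r}\ln N)$) rather than $O(N^{-1})$, and one must choose the correct decomposition index ($r$ when $d_r>0$, otherwise $r-1$) and the correct numerator estimate (\eqref{gen1} or \eqref{gen2}, according to whether $i\ge r$ or $i<r$) so that the sharp orderings of Lemmas~\ref{layers} and~\ref{s1}, combined with \eqref{barrierbound4}, deliver a bounded ratio. The remaining cases are routine.
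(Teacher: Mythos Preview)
Your proposal is correct and follows essentially the same approach as the paper: apply the Comparison Principle with the barrier $\vec{\Phi}$ of \eqref{barrier}, treat $\vec{w}^L$ and $\vec{w}^R$ separately, split into $x_j\notin J$ versus $x_j=\sigma_r\in J$, and at the transition points distinguish $d_r>0$ from $d_r=0$ and $i$ large from $i$ small, using \eqref{gen1} or \eqref{gen2} of Lemma~\ref{general1} exactly as you describe. The only place where the paper is a bit more explicit is for interior points $x_j\notin J$ in an intermediate block $(\sigma_m,\sigma_{m+1})$: there it partitions the $2^n$ Shishkin meshes into subclasses according to the largest $r\le m$ with $d_r>0$ (using Lemma~\ref{est1} when no such $r$ exists and Lemma~\ref{general1} with that $r$ otherwise), whereas you gesture at this by saying one ``must choose the correct decomposition index''; this is the same idea, just spelled out in more detail.
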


\begin{proof}
Since $\vec{w}=\vec{w}^L+\vec{w}^R$, it suffices to prove the result
for $\vec{w}^L$ and $\vec{w}^R$ separately. Here it is proved for
$\vec{w}^L$ by an application of Lemma \ref{comparison}. A similar proof holds for $\vec{w}^R$.\\
The proof is in two parts: $x_j$ is such that either $x_j \notin J$ or $x_j=\sigma_r \in J$. \\
First assume that $x_j \notin J$.
Each open subinterval $(\sigma_k,\sigma_{k+1})$ is treated separately.\\
First, consider $x_j \in (0,\sigma_1)$. Then, on each mesh $M$,
$x_{j+1}-x_{j-1} \leq CN^{-1}\sigma_1$ and the result follows
 from (\ref{geom2}) and Lemma \ref{est1}.\\
Secondly, consider $x_j \in (\sigma_1,\sigma_2)$, then $\sigma_1
\leq x_{j-1}$ and $x_{j+1}-x_{j-1} \leq CN^{-1}\sigma_2$. The
$2^{n}$ possible meshes are divided into subclasses of two types.
On the meshes $\overline{\Omega}^{M,N}$ with $b_1=0$ the result
follows from (\ref{geom2}), (\ref{geom-1}) and Lemma \ref{est1}.
On the meshes $\overline{\Omega}^{M,N}$ with
$b_1=1$ the result follows from (\ref{geom2}), (\ref{geom0}) and Lemma \ref{general}.\\
%When $x=\sigma_1$, similar arguments apply for the 2 subclasses, except that
%(\ref{geom3}) is also needed for the second subclass.\\
Thirdly, in the general case $x_j \in (\sigma_m,\sigma_{m+1})$ for
$2 \leq m \leq n-1$, it follows that $\sigma_m \leq x_{j-1}$ and
$x_{j+1}-x_{j-1} \leq CN^{-1}\sigma_{m+1}$. Then
$\overline{\Omega}^{M,N}$ is divided into subclasses of three types:
$\overline{\Omega}^{M,N}_0=\{\overline{\Omega}^{M,N}: b_1= \dots
=b_m =0\},\; \overline{\Omega}^{M,N}_{r}=\{\overline{\Omega}^{M,N}:
b_r=1, \; b_{r+1}= \dots =b_m =0 \; \mathrm{for \; some}\; 1 \leq r
\leq m-1\}$ and
$\overline{\Omega}^{M,N}_m=\{\overline{\Omega}^{M,N}: b_m=1\}.$ On
$\overline {\Omega}^{M,N}_0$ the result follows from (\ref{geom2}),
(\ref{geom-1}) and Lemma \ref{est1}; on $\overline{\Omega}^{M,N}_r$
from (\ref{geom2}), (\ref{geom-1}), (\ref{geom0}) and Lemma
\ref{general}; on $\overline{\Omega}^{M,N}_m$ from (\ref{geom2}),
(\ref{geom0})
and Lemma \ref{general}. \\
%When $x=\sigma_m$, similar arguments apply for the 3 subclasses, except that
%(\ref{geom3}) is also needed for the third subclass.\\
Finally, for $x_j \in (\sigma_n,1)$, $\sigma_n \leq x_{j-1}$ and
$x_{j+1}-x_{j-1} \leq CN^{-1}$. Then $\overline{\Omega}^{M,N}$ is
divided into subclasses of three types:
$\overline{\Omega}^{M,N}_0=\{\overline{\Omega}^{M,N}: b_1= \dots
=b_n =0\},\; \overline{\Omega}^{M,N}_{r}=\{\overline{\Omega}^{M,N}:
b_r=1, \; b_{r+1}= \dots =b_n =0 \; \mathrm{for \; some}\; 1 \leq r
\leq n-1\}$ and
$\overline{\Omega}^{M,N}_n=\{\overline{\Omega}^{M,N}: b_n=1\}.$ On
$\overline{\Omega}^{M,N}_0$ the result follows from (\ref{geom2}),
(\ref{geom-1}) and Lemma \ref{est1}; on $\overline{\Omega}^{M,N}_r$
from (\ref{geom2}), (\ref{geom-1}), (\ref{geom0}) and Lemma
\ref{general}; on $\overline{\Omega}^{M,N}_n$ from (\ref{geom0}) and
Lemma
\ref{est3}.\\\\
Now assume that $x_j \in J$. Then $x_j=\sigma_r$, some $r$. It
suffices to show that
\begin{equation}\label{ratiobound2} \frac{|(L^{M,N}(\vec{W}^L-\vec{w}^L))_i (x_j,t_k)|}{|(L^{M,N}\vec{\Phi})_i
(x_j,t_k)|} \leq C,
\end{equation}
for each $i=1,\;\dots \;,n,$ because an application of the
Comparison Principle then yields the required result.\\
The bounds on the denominator are given in \eqref{barrierbound4}.
To bound the numerator note that either $d_r>0$ or $d_r=0$.\\
Suppose first that $d_r>0$. Then the cases $i > r$ and $i \leq r$
are treated separately.\\
If  $i > r$, then, by \eqref{gen1} in Lemma \ref{general1},
\begin{equation}
|(L^{M,N}(\vec{W}^L-\vec{w}^L))_i(x_j,t_k)| \leq
C[M^{-1}+B^L_r(x_{j-1})+\frac{x_{j+1}-x_{j-1}}{\sqrt{\eps_{r+1}}}]
\end{equation}
Since $d_r>0$, by Lemma \ref{s1}, $B^L_r (x_{j-1}) = B^L_
r(\sigma_r -h_r) \leq CB^L_r(\sigma_r)= CN^{-2},$ and so
\begin{equation}
|(L^{M,N}(\vec{W}^L-\vec{w}^L))_i(x_j,t_k)| \leq
C[M^{-1}+N^{-2}+\frac{h_r+h_{r+1}}{\sqrt{\eps_{r+1}}}].
\end{equation}
Using \eqref{geom4} and the ordering of the $\eps_i$, these bounds
on the numerator and denominator lead
to \eqref{ratiobound2}.\\
If  $i \leq r$, then, by \eqref{gen2} in  Lemma \ref{general1},
\begin{equation}
|(L^{M,N}(\vec{W}^L-\vec{w}^L))_i(x_j,t_k)| \leq C[M^{-1}+\eps_i
\sum^r_{q=1}\frac{B^L_q(x_{j-1})}{\eps_q}+\frac{\eps_i}{\eps_{r+1}}\frac{x_{j+1}-x_{j-1}}{\sqrt{\eps_{r+1}}}].
\end{equation}
Since $d_r>0$, by Lemma \ref{s1}, $x_{j-1}=\sigma_r-h_r \ge
x^{(s)}_{q,r}$ for $1 \leq q \leq r-1$ and
\[\frac{B^L_q (x_{j-1})}{\eps_q} \leq \frac{B^L_r (x_{j-1})}{\eps_r} \leq \frac{B^L_r
(\sigma_r)}{\eps_r} = C\frac{N^{-2}}{\eps_r}.\] Thus
\begin{equation}
|(L^{M,N}(\vec{W}^L-\vec{w}^L))_i(x_j,t_k)| \leq
C[M^{-1}+\frac{\eps_i}{\eps_{r}}N^{-2}+\frac{\eps_i}{\eps_{r+1}}\frac{x_{j+1}-x_{j-1}}{\sqrt{\eps_{r+1}}}].
\end{equation}
Using \eqref{geom4} and the ordering of the $\eps_i$, these bounds
on the numerator and denominator lead
to \eqref{ratiobound2}.\\
Now suppose that $d_r=0$. Then $d_{r-1}>0$ and $D^+ h_r<0$,
because otherwise $x_j \notin J$. The cases $i \ge r$ and $i < r$
are now treated separately.\\
If  $i \ge r$, then, by \eqref{gen1} in Lemma \ref{general1} with
$r$ replaced by $r-1$,
\begin{equation}
|(L^{M,N}(\vec{W}^L-\vec{w}^L))_i(x_j,t_k)| \leq
C[M^{-1}+B^L_{r-1}(x_{j-1})+\frac{x_{j+1}-x_{j-1}}{\sqrt{\eps_{r}}}]
\end{equation}
Since $d_{r-1}>0$, by Lemma \ref{s1} with $r$ replaced by $r-1$,
\[B^L_{r-1} (x_{j-1})=B^L_ {r-1}(\sigma_r -h_r) \leq
CB^L_{r-1}(\sigma_{r-1})= CN^{-2},\] and so
\begin{equation}
|(L^{M,N}(\vec{W}^L-\vec{w}^L))_i(x_j,t_k)| \leq
C[M^{-1}+N^{-2}+\frac{h_r+h_{r+1}}{\sqrt{\eps_{r}}}].
\end{equation}
Using \eqref{geom4} and the ordering of the $\eps_i$, these bounds
on the numerator and denominator lead
to \eqref{ratiobound2}.\\
%%%%%%%%%%%%%%%%%%%%%%%%%%%%%%%%%%%%%%%%%%%%%%%%%%%%%%%%%%%%%%%%%%%
If  $i < r$, then by \eqref{gen2} in  Lemma \ref{general1} with
$r$ replaced by $r-1$,
\begin{equation}
|(L^{M,N}(\vec{W}^L-\vec{w}^L))_i(x_j,t_k)| \leq C[M^{-1}+\eps_i
\sum^{r-1}_{q=1}\frac{B^L_q(x_{j-1})}{\eps_q}+\frac{\eps_i}{\eps_{r}}\frac{x_{j+1}-x_{j-1}}{\sqrt{\eps_{r}}}].
\end{equation}
Since $d_{r-1}>0$, by Lemma \ref{s1}with $r$ replaced by $r-1$,
$x_{j-1}=\sigma_r-h_r \ge x^{(s)}_{q,r}$ for $1 \leq q \leq r-1$
and
\[\frac{B^L_q (x_{j-1})}{\eps_q} \leq \frac{B^L_{r-1} (x_{j-1})}{\eps_{r-1}} \leq \frac{B^L_{r-1}
(\sigma_{r-1})}{\eps_{r-1}} = C\frac{N^{-2}}{\eps_{r-1}}.\] Thus
\begin{equation}
|(L^{M,N}(\vec{W}^L-\vec{w}^L))_i(x_j,t_k)| \leq
C[M^{-1}+\frac{\eps_i}{\eps_{r-1}}N^{-2}+\frac{\eps_i}{\eps_{r}}\frac{x_{j+1}-x_{j-1}}{\sqrt{\eps_{r}}}].
\end{equation}
Using \eqref{geom4} and the ordering of the $\eps_i$, these bounds
on the numerator and denominator lead to \eqref{ratiobound2}. This
completes the proof.\eop \end{proof}

The following theorem gives the required first order in time and
essentially second order in space parameter-uniform error estimate.
\begin{theorem}Let $A(x,t)$ satisfy (\ref{a1}) and
(\ref{a2}). Let $\vec u$ denote the exact solution of (\ref{BVP})
and $\vec U$ the discrete solution of (\ref{discreteBVP}).  Then
\begin{equation}\;\; ||\vec{U}-\vec{u}|| \leq C\,N^{-2}(\ln N)^3. \end{equation}
\end{theorem}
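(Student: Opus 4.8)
The plan is to obtain the estimate directly by combining the two component error estimates already established, via the Shishkin decomposition of both the exact and the discrete solutions. First I would recall the decompositions $\vec u=\vec v+\vec w$ with $\vec w=\vec w^L+\vec w^R$ from \eqref{smoothcomp}--\eqref{singularcomp}, together with the corresponding discrete decomposition $\vec U=\vec V+\vec W$, $\vec W=\vec W^L+\vec W^R$, where $\vec V,\vec W^L,\vec W^R$ solve the discrete problem \eqref{discreteBVP} with the same data as their continuous counterparts (as already used in Section 6). Since both $\vec L$ and $\vec L^{M,N}$ are linear, these decompositions are consistent, so that $\vec U-\vec u=(\vec V-\vec v)+(\vec W-\vec w)=(\vec V-\vec v)+(\vec W^L-\vec w^L)+(\vec W^R-\vec w^R)$ on $\overline\Omega^{M,N}$.

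The triangle inequality for the discrete maximum norm then gives
\[
\|\vec U-\vec u\|\;\le\;\|\vec V-\vec v\|+\|\vec W^L-\vec w^L\|+\|\vec W^R-\vec w^R\|.
\]
Applying Theorem \ref{smootherrorthm} to the first term and Theorem \ref{singularerrorthm} to the second and third (the latter theorem already treating $\vec w^L$ and $\vec w^R$ separately in its proof) yields
\[
\|\vec U-\vec u\|\;\le\;C\bigl(M^{-1}+N^{-2}(\ln N)^3\bigr).
\]

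To arrive at the purely $N$-dependent form in the statement, I would invoke the standard coupling of the two discretisation parameters customary for this class of problems (compare the convention \eqref{meshpts2} fixing $N$): one chooses $M$ large enough that $M^{-1}\le N^{-2}(\ln N)^3$, for instance $M=\lceil N^2\rceil$, since then $M^{-1}\le N^{-2}\le N^{-2}(\ln N)^3$. Absorbing the time-step term into the spatial term gives $\|\vec U-\vec u\|\le C\,N^{-2}(\ln N)^3$, as required. There is no genuine obstacle at this stage: all of the analytic difficulty has already been discharged in the sharp layer bounds of Lemma \ref{lsingular}, the transition-point orderings of Lemmas \ref{layers} and \ref{s1}, the barrier function \eqref{barrier}, and Theorems \ref{smootherrorthm} and \ref{singularerrorthm}; the only points requiring care are checking that the discrete components sum correctly and making the $M$--$N$ coupling explicit so that the first-order-in-time contribution is dominated by the essentially second-order-in-space one.
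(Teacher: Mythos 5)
Your argument is the same as the paper's: the triangle inequality applied to the Shishkin decomposition combined with Theorems \ref{smootherrorthm} and \ref{singularerrorthm}, which the paper states in one line. In fact you are slightly more careful than the paper, since the two component theorems only give $C(M^{-1}+N^{-2}(\ln N)^3)$ and the paper silently drops the $M^{-1}$ term, whereas you make explicit the $M$--$N$ coupling (e.g.\ $M\ge N^{2}$) needed for the purely $N$-dependent bound in the statement to hold.
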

\begin{proof}
An application of the triangle inequality and the results of
Theorems \ref{smootherrorthm} and \ref{singularerrorthm} leads
immediately to the required result.\eop
 \end{proof}


\begin{thebibliography}{9}

\bibitem{MORS} J. J. H. Miller, E. O'Riordan, G.I.
Shishkin, \textit{Fitted Numerical Methods for Singular Perturbation
Problems\/}, World Scientific Publishing Co., Singapore, New Jersey,
London, Hong Kong (1996).

\bibitem{RST} H.-G. Roos and M. Stynes and L. Tobiska, Numerical methods for singularly
perturbed differential equations, Springer Verlag (1996).

\bibitem{FHMORS}P.A. Farrell, A. Hegarty, J. J. H. Miller, E. O'Riordan, G. I.
Shishkin, \textit{Robust Computational Techniques for Boundary
Layers\/}, Applied Mathematics \& Mathematical Computation (Eds. R.
J. Knops \& K. W. Morton), Chapman \& Hall/CRC Press (2000).

\bibitem{L} T. Linss, Robust method for a time dependent system of
coupled singularly perturbed reaction-diffusion problems, Preprint
MATH-NM-03-2008, TU Dresden, April (2008).

\bibitem{GLOR} J. L. Gracia, F. J. Lisbona, E. O'Riordan, A system
of singularly perturbed reaction-diffusion equations, Advances in
Computational Mathematics, 32, 43-62,(2010).

\bibitem{PVM} M. Paramasivam, S. Valarmathi, J. J. H. Miller,
Second order parameter-uniform convergence for a finite difference method for a singularly perturbed linear reaction diffusion system, submitted (2010).

\end{thebibliography}
\end{document}